\newcommand*{\nindep}{%
	\mathbin{
		\mathpalette{\@indep}{\not}
	}%
}
\newcommand*{\@indep}[2]{%
	\sbox0{$#1\perp\m@th$}
	\sbox2{$#1=$}
	\sbox4{$#1\vcenter{}$}
	\rlap{\copy0}
	\dimen@=\dimexpr\ht2-\ht4-.2pt\relax
	\kern\dimen@
	{#2}%
	\kern\dimen@
	\copy0 
}
\newcommand{\bea}{\begin{eqnarray*}}
\newcommand{\eea}{\end{eqnarray*}}
\newcommand{\be}{\begin{eqnarray}}
\newcommand{\ee}{\end{eqnarray}}
\newcommand{\bay}{\begin{array}}
\newcommand{\eay}{\end{array}}
\newcommand{\ben}{\begin{enumerate}}
\newcommand{\een}{\end{enumerate}}
\newcommand{\bcen}{\begin{center}}
\newcommand{\ecen}{\end{center}}
\def\P{\mathrm{pr}}
\def\E{ E}
\def\V{\mathrm{var}}
\def\Sp{\mathrm{span}}
\def\Tr{\mathrm{tr}}
\def\ind{ 1}
\def\R{ \mathbb{R} }
\def\Rp{ \mathbb{R}^p }
\def\Rd{ \mathbb{R}^d }
\def\tU{ \hat{U} }
\def\AIC{\textsc{aic}}
\newcommand{\T}{\mathrm{\scriptscriptstyle T}}
\DeclareOldFontCommand{\bf}{\normalfont\bfseries}{\mathbf}
\newtheorem{thm}{Theorem}[section]
\newtheorem{lem}{Lemma}[section]
\newtheorem{prop}{Proposition}[section]
\newtheorem{remark}{Remark}[section]
\begin{document}

	\title{\textbf{\huge{}Nonparametric principal subspace regression}}
	 \author{Mark Koudstaal \\
  Department of Statistical Sciences, University of Toronto \\[10pt]
  Dengdeng Yu\\
  Department of Statistical Sciences, University of Toronto \\[10pt]
  Dehan Kong\\
  Department of Statistical Sciences, University of Toronto \\[10pt]
  Fang Yao \\
  Department of Probability \& Statistics, Peking University \\[10pt]
    }
	\maketitle

\maketitle

\begin{abstract}
In scientific applications, multivariate observations often come in tandem with temporal or spatial covariates, with which the underlying signals vary smoothly.  The standard approaches
such as principal component analysis and factor analysis neglect the smoothness of the data, while multivariate linear or nonparametric regression fail to leverage the correlation information among multivariate response variables. We propose a novel approach named nonparametric principal subspace regression to overcome these issues. By decoupling the model discrepancy, a simple and general two-step framework is introduced, which leaves much flexibility in choice of model fitting. We establish theoretical property of the general framework, and offer implementation procedures that fulfill requirements and enjoy the theoretical guarantee. We demonstrate the favorable finite-sample performance of the proposed method through simulations and a real data application from an electroencephalogram study. 
\end{abstract}

{\bf Keywords:}
Factor model, nonparametric principal subspace, singular value decomposition, smoothness.

\newpage

\section{Introduction}
\label{Introduction}
In scientific applications, one is often interested in predicting a multivariate response using one or a few predictor variables. The multivariate response linear regression is a conventional way to model this type of data. The usual procedure is the ordinary least squares, equivalent to performing individual linear regression of each response variable on predictor variables, which fails to utilize the correlation information among the response variables. To incorporate the correlation information, \cite{breiman1997predicting} proposed a multivariate shrinkage method to leverage information from the correlation structure, which helps to improve the predictive accuracy compared to the ordinary least squares. 

Although multivariate response linear regression is a useful tool, it may not work properly in some applications. For example, in the electroencephalogram application presented in Section \ref{EEGdata},  we are interested in modelling the dynamic changes of the electroencephalogram signals detected from 64 electrodes of the scalp. For each electrode, the signal is sampled at 256 Hz per second. We plot the electroencephalogram signals from one randomly selected participant in Figure \ref{eegPlots}(a), where the curves show nonlinear patterns. This indicates that the multivariate response linear regression model may not be adequate to characterize the relationship between the common predictor time and the multivariate signals from the 64 electrodes. A natural rescue is to utilize nonparametric regression of the multivariate response variables on the common predictors.  However this solution is unsatisfactory as performing individual nonparametric regressions does not capture correlations among the response variables.  

Motivated by the application, we propose a new nonparametric principal subspace regression model, which allows more flexible nonlinear structures of the regression functions while takes into account the correlation among response variables of the same time.  Our proposal is related to the factor models, which characterize the correlation structure in multivariate data. 
In factor models, the signal of interest is expressed as a linear combination of a few latent variables, and does not concern additional covariate information that may play a role in estimation or prediction. For instance, factor models are often employed in contexts such as multiple time series or correlated functional data \citep{engle1981one, huang2009analysis}, where useful information may be hidden in the form of smoothness with respect to some additional covariates, e.g. temporal or spatial variable. Neglecting such information in recovery and prediction potentially hinders the quality and performance of resulting estimators. This has been noticed by \cite{durante2014locally}, which further proposed a locally adaptive factor process under the Bayesian framework for characterizing multivariate mean-covariance changes in continuous time, allowing locally varying smoothness in both the mean and covariance matrix of multivariate time series. However, theoretical guarantees are lacking for the approach, which may leave practitioners uncertain about the quality of resulting estimates. 

In this work, we approach the problem from a different perspective that is intuitive and broadly applicable. The contributions are summarized as follows. First, we propose a new nonparametric principal subspace regression model. This not only incorporates the correlation structure among multivariate responses, but also accounts for nonlinear trend and smoothness of the data. Second, we introduce a simple two-step estimation framework, where the first step is to obtain the orthogonal left singular vectors and the second step is to estimate the nonparametric loading functions. This procedure is general and leaves flexibility in choice of model fitting. Third, we provide theoretical guarantees for the general proposal, and then present some examples of standard linear smoothers and the rates they attain when used in the general proposal. Lastly, we show that our method outperforms its counterpart, the conventional nonparametric regression, in simulations as well as an electroencephalogram study. This is not surprising because our approach significantly reduces the model complexity and risk of overfitting compared to individual nonparametric regressions. 

The rest of the paper is organized as follows. In Section \ref{ProposedMethodology}, we propose the nonparametric principal subspace regression methodology, state the main
results and three important examples. We further give a specific fitting procedure for our approach. Section \ref{numerical} evaluates the favorable finite-sample performance of the proposed method through simulations and a real data application from an electroencephalogram study.  Proofs of main propositions and theorems as well as the statements for relevant lemmas are contained in
Sections \ref{Appendixprooftheorem} and \ref{lemmasMainTheorem}. Statements of auxiliary lemmas and theorems for examples are given in Section \ref{lemmasMainExamples}, whereas some additional simulation results and proofs of all lemmas and theorems for examples are deferred to the supplementary file.

\section{Proposed Methodology and Theoretical Guarantees}
\label{ProposedMethodology}
\subsection{Notation}
\label{Setup:Notation}
Denote the inner product of $a,b \in \R^m$ by $ \langle a , b \rangle = a^\T b=\sum_{i=1}^m a_i b_i$, where $ a_i $ and $ b_i $ are the $i$th components of $ a $ and $ b $ respectively. Let $\| \cdot \|$ be the corresponding norm induced by the inner product. Define the rescaled inner
product $   \langle a , b \rangle_m  =  \frac{1}{m}  \langle a , b \rangle  $ and the induced norm $\| \cdot \|_m$. For two functions $f,g \in L^2$, the inner product and corresponding norm bear the subscript
$L^2$, i.e. $\langle f , g \rangle_{L^2}=\int_T f(x)g(x)dx $ and $ \| f\|^2_{L_2}=\int_T \{f(x)\}^2dx $,  where $ T $ is the domain of $ x $. Let $ \|a\|_{\infty}=\max_{1\leq i\leq m} |a_i|$ and $ \|f\|_{\infty} $ denote the sup norm of function $f \in L^2$.  Suppose we have a matrix $M\in  \mathbb{R}^{p\times n}$ with rank $r$, and its singular values satisfy $\sigma_1(M) \ge \sigma_2(M) \ge \ldots \ge \sigma_r(M)>\sigma_{r+1}(M)=\ldots=0$. Consider the singular value decomposition $M = U \Sigma V^\T  $, where $U$ and $V$ are $p\times r$ and $n\times r$ matrices respectively with orthonormal columns, and $ \Sigma = \textrm{diag}(\{\sigma_j(M)\}_{1\leq j\leq r})$. The spectral norm of $M$,
denoted by $\|M \|$, is defined by $\| M \| = \sigma_1(M)$ and the Frobenius norm $\| M \|_F$ is defined as $\| M \|^2_F =  \sum_{1\leq j\leq r} \sigma_j^2(M) = \sum_{1\leq k\leq p, 1\leq i\leq n} M_{ki}^2$. Suppose we have two $p \times q$ orthonormal collections $U$ and $V$ with $ p\geq q $, and $\sigma_1 \ge \sigma_2 \ge \dots \ge \sigma_q \ge 0$ are singular values of $U^\T V$. Define the principal angles between two matrices $U$ and $ V $ as $\Theta(U,V) = \textrm{diag} ( \cos^{-1} (\sigma_1) , \dots, \cos^{-1} (\sigma_q)  )$. Applying sinusoid elementwise and taking the spectral
norm gives the $\sin \Theta$ distance between $U$ and $V$, denoted by $\| \sin
\Theta(U,V) \|$.

\subsection{Nonparametric principal subspace regression}
\label{Setup:Data}
Let $\{(x_i , y_i), i=1,\ldots,n\} $ be independent and identically distributed observations, where $ x_i\in [0, 1]^d$ and $ y_i\in \Rp $. We consider the following nonparametric model
\be \label{nonparametricFactorData}
y_i  = F(x_i)  + z_i, 
\ee
with $ z_i=(z_{i1}, \ldots, z_{ip})^{\T} \in \Rp $ is independent and identically distributed with mean $ 0 $ and covariance $\Sigma $, and $F : [0, 1]^d \mapsto \Rp$ so that  $\E (  y_i | x_i  )  =  F(x_i)$.  Our goal is to estimate the function $ F(\cdot) $, which characterizes the relationship between $ x_i $ and $ y_i $, under smoothness assumptions on the components of $F(\cdot)$.  


Motivated by factor analysis and the singular value decomposition as methods of accounting for correlation among variables in the $ y_i $, we assume that $F(\cdot)$ lies in a low dimensional subspace of $\R^p$ and can be written as




\be  \label{nonparametricFactorizationModel}
     F(x) =  \sum_{k = 1}^q  f_k(x) u_k,
\ee
where $ q\leq p $, $u_k \in \mathbb{R}^p$ ($1\leq k\leq q$) are orthonormal vectors and $ \{f_k(\cdot), 1\leq k\leq p\} $ are smooth 
functions which are orthogonal in $L^2[0, 1]^d$. 
As we show in Proposition \ref{svdRepresentation}, if one takes $q=p$, every function $F : [0, 1]^d \mapsto \Rp$ with components in $L^2[0, 1]^d$ has such a representation; hence the model has the simple interpretation of reducing dimension with smoothness dependence on covariates.   
Thus, in addition to capturing correlations via factor type analysis, this model also nonparametrically incorporates smoothness information into the covariates. We refer to our model as ``nonparametric principal subspace regression''. 

With the model in place, we aim to find an estimator $G(\cdot)$ of $F(\cdot)$ so that the sample discrepancy
\[  R_n(G) \equiv  \frac{1}{n} \sum_{i=1}^n || F(x_i) - G(x_i) ||^2   \]
is small at a proper nonparametric rate. As the regression functions and
estimates are assumed to be smooth, under the assumption that the $x_i$'s grow dense in $[0,1]^d$, $R_n(G)$ is a reasonable approximation to
\[  R(G) \equiv  \int_{[0,1]^d} || F(x) - G(x) ||^2 dx .   \]
Since we do not observe $F(\cdot)$ directly, a natural surrogate for $R_n(G)$ is the empirical discrepancy,
$R^{\mathcal{D}}_n(G)  \equiv  \frac{1}{n} \sum_{i=1}^n \| y_i - G(x_i) \|^2 $, which we aim to minimize in place of $R_n(G)$.  

Given the model \eqref{nonparametricFactorizationModel}, assume $G$ takes the form 
\[   G(x) = \sum_{k=1}^q g_k(x) v_k,    \]
where $V = (v_1 \ldots v_q)\in \mathbb{R}^{p\times q}$ is a set of orthonormal vectors and $ \{g_k(\cdot), 1\leq k\leq p\} $ are smooth functions.  Then we form the projection matrix $P_{V} = V V^{\T}$ and note that $I_p - P_{V}$ and $P_{V}$ project onto orthogonal subspaces.  Thus for  any   $x, y \in \R^p$ we have $(I_p - P_{V}) x \perp P_{V} y$ as $\langle (I_p - P_{V}) x , P_{V} y \rangle  = x^{\T} (I_p - P_{V}) P_{V} y = x^{\T} (P_{V} - P_{V}) y = 0$, since $P_{V} P_{V} =  V V^{\T}  V V^{\T} = V I_q V^{\T} = V V^{\T}   = P_{V}$. Then for a given $i$, we write
\begin{eqnarray}
 || y_i  -  V g (x_i) ||^2 &=& || (I_p - P_{V}  )y_i  + P_{V} \{y_i  -  V g (x_i) \} ||^2    \nonumber \\
 &=& || (I_p - P_{V}  )y_i  ||^2  + || P_{V} \{ y_i  -  V g (x_i) \} ||^2.    \nonumber
\end{eqnarray}
Setting $c(Y,V ) \equiv \frac{1}{n}\sum_{i=1}^n || (I_p - P_{V}  )y_i  ||^2 $, which is independent of $G$, and observing that  
\[ || P_{V} ( y_i  -  V g (x_i) ) ||^2  = || V^{\T} y_i  -   g (x_i) ) ||^2  = \sum_{k=1}^q \{ v_k^{\T} y_i  -   g_k(x_i) )\}^2, \]
we may decompose the objective $R^{\mathcal{D}}_n(G)$ as
\[  R^{\mathcal{D}}_n(G) = c(Y,V ) + \sum_{k=1}^q  R^{\mathcal{D}}_n(v_k, g_k) \hspace{5pt} \text{ where }
\hspace{5pt} R^{\mathcal{D}}_n(v_k, g_k)  \equiv  \frac{1}{n}   \sum_{i=1}^n  \{v_k^{\T} y_i  - g_k(x_i) \}^2 . \]
This decouples the optimization problem of minimizing
$R_n^{\mathcal{D}}(G)$ over 
functions of the assumed form into two separate problems of finding a
sufficiently accurate estimate $\hat{U}$ of $U$, and finding individual
optimizations of the $R^{\mathcal{D}}_n(\hat u_k, g_k)$ along the directions $\hat u_k$.   
{\color{black} We may also consider adding a penalty, $\mathcal P (G)$, to $R^{\mathcal{D}}_n(G)$.  A natural option is to impose smoothness assumptions on $g_k$, which one could do with a decomposable penalty of the form  $\mathcal P (G) = \sum_{k=1}^q \mathcal P_k (g_k) $ where $\mathcal P_k (\cdot)$ are semi-norms penalizing smoothness.  This results in minimization of $R^{\mathcal{D}}_n(v_k, g_k) +  \mathcal P_k (g_k)$ along the directions $v_k$.}
This observation suggests a two step fitting procedure:

\begin{itemize}
\item \textit{General Estimation Procedure:} Given data $y_1,\dots,y_n \in \mathbb{R}^p$ from model \eqref{nonparametricFactorData},
one obtains an estimate of $F(\cdot)$ as follows:  

\textit{Step 1.} Find an estimate $\tU=(\hat{u}_1 \ldots \hat{u}_q)$ of $U = (u_1 \ldots u_q)$. 

\textit{Step 2.} Plug in the estimate $\tU$ into $R_{n}
^{\mathcal{D}} (G)$ and find the corresponding minimizers of the $R_{n}
^{\mathcal{D}} (\hat u_k, g_k)$, or penalized versions thereof, denoted by $\hat{f}_1,\dots,\hat{f}_k$. 
\end{itemize}
Then the estimate $\hat{F}$ is given by
\[    \hat{F} =  \sum_{k=1}^q \hat{u}_k \hat{f}_k.    \]
If, for any vectors $\{v_1, \dots, v_q \}$, minimizing the $R^{\mathcal{D}}_n(v_k, g_k) +  \mathcal P_k (g_k)$ along the directions $v_k$ results in an identical smoothing procedure applied to the
data $v_k^{\T} y_i$ for $k=1,\dots,q$, we call the General Estimation Procedure \emph{direction invariant}.
This procedure is general, and leaves flexibility in model fitting while
provides an easy route to develop theory. In next subsection, we present the salient theoretical features of the general estimation procedure.

\subsection{Theoretical guarantees}
\label{Theoretical guarantees}

We first present a proposition which ensures that a reasonable $F:[0,1] \rightarrow \Rp$
has a singular value decomposition type representation and supports the form of function proposed in the paper, while its proof is deferred to the Section \ref{Appendixprooftheorem}. 
\begin{prop}  \label{svdRepresentation}
Suppose that $F:[0,1]^d \rightarrow \mathbb{R}^p$, which can be written as $F(\cdot) = (F_1(\cdot),
\dots,F_p(\cdot))^{\T}$, satisfies $F_k \in L^2[0,1]^d$ for $ 1\leq k\leq p $.  Then $F(\cdot)$
has a singular value type decomposition
\[   F = \sum_{k=1}^p \sigma_k  u_k \otimes v_k=\sum_{k=1}^p u_k \otimes f_k,    \]
where $ f_k= \sigma_k v_k $, $v_k$'s are orthonormal in $L^2[0,1]^d$, $u_k$'s are orthonormal in $\Rp$ and $\sigma_1\ge \sigma_2 \ge \ldots \ge \sigma_p\ge 0 $.
\end{prop}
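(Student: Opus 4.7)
The plan is to reduce the statement to a finite-dimensional spectral decomposition of the Gram matrix of the component functions, which is the natural analogue of forming $M^{\T} M$ when computing a matrix SVD. Concretely, define the $p \times p$ matrix $M$ with entries $M_{ij} = \langle F_i , F_j \rangle_{L^2}$. Because the $F_k$ lie in $L^2[0,1]^d$, all entries are finite, and for any $a \in \mathbb{R}^p$ we have $a^{\T} M a = \| a^{\T} F \|_{L^2}^2 \ge 0$, so $M$ is symmetric and positive semi-definite.

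Next, I would apply the spectral theorem to $M$, obtaining $M = U \Lambda U^{\T}$ with $U = (u_1 \ldots u_p)$ orthonormal in $\mathbb{R}^p$ and $\Lambda = \mathrm{diag}(\lambda_1, \ldots, \lambda_p)$ with $\lambda_1 \ge \ldots \ge \lambda_p \ge 0$. Setting $\sigma_k = \sqrt{\lambda_k}$, for each $k$ with $\sigma_k > 0$ I would define
\[
v_k(x) \;=\; \sigma_k^{-1} \, u_k^{\T} F(x) \;=\; \sigma_k^{-1} \sum_{j=1}^p u_{jk} F_j(x) .
\]
A direct computation gives $\langle v_k , v_\ell \rangle_{L^2} = \sigma_k^{-1} \sigma_\ell^{-1} u_k^{\T} M u_\ell = \sigma_k^{-1} \sigma_\ell^{-1} \lambda_k \delta_{k\ell} = \delta_{k\ell}$, so these $v_k$ are orthonormal in $L^2$. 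For indices with $\sigma_k = 0$, the identity $u_k^{\T} M u_k = 0$ forces $u_k^{\T} F = 0$ in $L^2$, so those terms contribute nothing to the sum; I would simply fill in the missing $v_k$ by Gram--Schmidt completion of the already-constructed orthonormal set, which is possible because $L^2[0,1]^d$ is infinite-dimensional.

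With these definitions in hand, the identity $\sum_{k=1}^p \sigma_k u_k v_k(x) = \sum_{k=1}^p u_k (u_k^{\T} F(x)) = (U U^{\T}) F(x) = F(x)$ follows immediately, where the last equality uses that $U$ is orthogonal (for indices with $\sigma_k = 0$ the summand vanishes regardless of the chosen $v_k$). Setting $f_k = \sigma_k v_k$ and interpreting $u_k \otimes v_k$ as the $\mathbb{R}^p$-valued function $x \mapsto u_k v_k(x)$ yields the claimed representation.

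The only mild obstacle is the bookkeeping for zero singular values, which is purely a matter of completing an orthonormal system in $L^2$ rather than a substantive analytic difficulty; all other steps amount to finite-dimensional linear algebra applied to the Gram matrix $M$.
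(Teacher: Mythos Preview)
Your proof is correct, but it proceeds dually to the paper's. The paper views $F$ as a bounded linear operator $F:L^2[0,1]^d\to\mathbb{R}^p$, forms the self-adjoint finite-rank operator $F^*F$ on $L^2[0,1]^d$, diagonalizes it to obtain the $v_k$ first, and then recovers the $u_k$ as (normalized) images $Fv_k$. You instead form the $p\times p$ Gram matrix $M=(\langle F_i,F_j\rangle_{L^2})$, which is precisely the matrix of $FF^*$ acting on $\mathbb{R}^p$, diagonalize it to obtain the $u_k$ first, and then define $v_k=\sigma_k^{-1}u_k^{\T}F$. The two constructions are the standard dual routes to an SVD. Your version has the modest advantage of invoking only the finite-dimensional spectral theorem and avoiding any mention of compact operators on $L^2$; the paper's version makes the operator-theoretic interpretation of the tensor-product notation $u_k\otimes v_k$ more transparent and generalizes verbatim if $\mathbb{R}^p$ is replaced by another Hilbert space. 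Either way the bookkeeping for zero singular values is handled identically, by completing the orthonormal system.
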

\begin{remark}
By this proposition, if we further impose additional low rank assumptions, i.e. $ \sigma_k=0 $ for all $ q+1\leq k\leq p $, and smooth assumptions on $ f_k $'s, one can obtain the form of function \eqref{nonparametricFactorizationModel}. 
\end{remark}

Next, we propose the general theorems for the estimation procedure outlined above. The proofs of the theorems are deferred to the Section \ref{Appendixprooftheorem}.

We assume the design points $x_i$'s are independent and identically distributed with $x_i$ following uniform distribution on $ [0, 1]^d$, i.e. $x_i\sim \mathcal{U} [0, 1]^d$. We also assume $ z_i=(z_{i1}, \ldots, z_{ip})^{\T} \in \Rp $ are independent and identically distributed as $ \mathcal{N}(0,\sigma ^2I_p) $, which puts us in the domain of $p$ repeated nonparametric experiments. The assumption of uncorrelated Gaussian noise is commonly used to facilitate model exploration and theoretical development \citep{cai2012, donoho1994ideal, Tsybakov:2008:INE:1522486, Johnstone2017gsm} in the study of nonparametric experiments. In Section \ref{additionalsim} in the supplementary file, we have shown through simulations that our method works well without the Gaussian noise assumption, even without the independence assumption of $ z_i $ across $ 1\leq i\leq n $.

We first assume we have obtained a good estimate $\tU$ of $U$ so that $\| \sin \Theta (\tU ,U) \|$ is small, postponing discussion of how to obtain such an estimate $\tU$ to Theorem \ref{mainConsistencyfirststep}.   We then define the estimated rotated response as $ \hat{Y}^*_{ik}=\hat{u}_k^\T y_i $ for $ 1\leq k\leq q $ and $\hat Y^*_{\cdot k}=(\hat{Y}^*_{1k}, \dots, \hat{Y}^*_{1k})^{\T} $.  Further, we define the oracle rotated data $\{(Y^o_{ik}, x_i): 1\leq i\leq n\} $ as 
\[  Y^o_{ik}  \equiv u_k^\T y_i =f_k(x_i) + \epsilon_{ik},  \hspace{5pt} \text{ where } \hspace{5pt}  \epsilon_{ik} \equiv u_k^{\T}z_i.
\]
Note that the model assumptions guarantee that the  $\epsilon_{ik}$'s are independent and identically distributed $ \mathcal{N}(0, \sigma^2) $ for $ 1\leq i\leq n $ and $ 1\leq k\leq q $. 
Now assume $ L $ is a linear smoother and note that we may apply $L$ to the oracle rotated data $\{(Y^o_{ik}, x_i): 1\leq i\leq n\} $ to obtain nonparametric estimates $ \hat{f}^o_k $ for $ 1\leq k\leq q $. 
If $L$ satisfies $ \max_{1\leq k \leq q} \E \| \hat{f}^o_k - f_k \|_n^2 \le C  n^{ -r } $ from the estimates $\hat{f}^o_k = L Y^o_{\cdot k}$, where $Y^o_{\cdot k} = (Y^o_{1k}, \dots, Y^o_{nk})^{\T}$, then we say $L$ attains the rate $r$.  
With these definitions in place, we have the following theorem.

{\color{black}
\begin{thm}
\label{mainConsistencyTheorem}
Suppose we have an estimate $\tU$ of $U$ satisfying 
\[ \E \left\{ \| \sin \Theta (\tU ,U) \|^4 \right\} \le C (p/n)^2 \]
and that the General Estimation Procedure is direction invariant, resulting in a bounded linear smoother $L$, $||L|| \le C$, attaining the rate $r$.  Then if $p=O(n)$ and $\max_k \|f_k \|_{\infty} \le B$, the corresponding estimator $\hat{F} = \sum_{k=1}^q \hat{u}_k \hat{f}_k$ formed from the General Procedure admits an error 
\[ \E \left\{ R_n(\hat{F}) \right\} \lesssim  q n^{-r} \max \left\{1, p n^{-(1 - r)} \right\}. \]  
Consequently,  as long as $p = O(n^{1 - r})$ we have $ \E \left\{ R_n(\hat{F}) \right\} \lesssim q n^{-r}$.
\end{thm}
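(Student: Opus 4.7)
The plan is to split $R_n(\hat F)$ Pythagorean-style along $\hat U$ and its orthogonal complement, handle the subspace error directly from the $\sin\Theta$ hypothesis, and reduce the within-subspace error to the oracle rate plus a controlled perturbation. Writing $F(x) = Uf(x)$ and $\hat F(x) = \hat U \hat f(x)$ with $f=(f_1,\ldots,f_q)^{\T}$, orthogonality gives
\[
R_n(\hat F) \;=\; \underbrace{\frac{1}{n}\sum_{i=1}^n \bigl\|(I_p - \hat U\hat U^{\T})F(x_i)\bigr\|^2}_{I} \;+\; \underbrace{\sum_{k=1}^q \|\tilde f_k - \hat f_k\|_n^2}_{II},
\]
where $\tilde f_k(\cdot) = \hat u_k^{\T} F(\cdot)$. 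For $I$, I would use $\|(I_p - \hat U\hat U^{\T})U\| = \|\sin\Theta(\hat U,U)\|$ and $\|f(x_i)\|^2 \le qB^2$ to obtain $I \le qB^2\|\sin\Theta\|^2$, so that Cauchy--Schwarz with the hypothesis $\E\|\sin\Theta\|^4 \le C(p/n)^2$ yields $\E I \lesssim qp/n$.

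For $II$, the key is the decomposition $\hat u_k = \sum_j m_{kj} u_j + w_k$ with $m_{kj} = \hat u_k^{\T} u_j$ and $W = [w_1,\ldots,w_q] = (I_p - UU^{\T})\hat U$, which satisfies $\|W\| = \|\sin\Theta\|$ and $\|W\|_F^2 = \|\sin\Theta\|_F^2 \le q\|\sin\Theta\|^2$. Substituting into $\hat Y^*_{ik} = \tilde f_k(x_i) + \sum_j m_{kj}\epsilon_{ij} + w_k^{\T} z_i$ and invoking direction invariance plus linearity of $L$ produces the identity
\[
\hat f_k - \tilde f_k \;=\; \sum_{j=1}^q m_{kj}(\hat f^o_j - f_j) \;+\; L(w_k^{\T} z_{\cdot}).
\]
Since $M = \hat U^{\T} U$ has $\|M\| \le 1$, the first contribution obeys $\sum_k \|\sum_j m_{kj}(\hat f^o_j - f_j)\|_n^2 \le \sum_j \|\hat f^o_j - f_j\|_n^2$, whose expectation is at most $qCn^{-r}$ by the rate hypothesis. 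Using $\|L\| \le C$ and the trace inequality $\mathrm{tr}(AB) \le \mathrm{tr}(A)\|B\|$ for positive semidefinite $A,B$, the second contribution satisfies
\[
\sum_{k=1}^q \|L(w_k^{\T} z_{\cdot})\|_n^2 \;\le\; C^2\,\mathrm{tr}\bigl(WW^{\T} \hat\Sigma_z\bigr) \;\le\; C^2\,\|W\|_F^2\,\|\hat\Sigma_z\|,\quad \hat\Sigma_z = \frac{1}{n}\sum_i z_iz_i^{\T}.
\]

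The main obstacle is controlling this last quantity in expectation, because $W$ and $\hat\Sigma_z$ share statistical dependence through the sample; direct conditioning is not available. The trace inequality is what enables decoupling: Cauchy--Schwarz gives $\E[\|W\|_F^2\|\hat\Sigma_z\|] \le (\E\|W\|_F^4)^{1/2}(\E\|\hat\Sigma_z\|^2)^{1/2}$, where $\|W\|_F^2 \le q\|\sin\Theta\|^2$ combined with the fourth moment hypothesis gives $(\E\|W\|_F^4)^{1/2} \le q\sqrt C\, p/n$, while standard Davidson--Szarek-type bounds on the operator norm of the Gaussian sample covariance yield $\E\|\hat\Sigma_z\|^2 = O(1)$ under $p = O(n)$. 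This produces an $O(qp/n)$ contribution, and collecting everything gives
\[
\E R_n(\hat F) \;\lesssim\; qp/n + qn^{-r} \;\lesssim\; qn^{-r}\bigl(1 + pn^{-(1-r)}\bigr) \;\lesssim\; qn^{-r}\max\bigl\{1,\, pn^{-(1-r)}\bigr\},
\]
which is the claimed bound and collapses to the pure rate $qn^{-r}$ once $p = O(n^{1-r})$.
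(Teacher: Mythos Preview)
Your argument is correct, but the decomposition differs from the paper's. The paper writes the matrix of residuals as
\[
\tilde F - \tilde{\hat F} \;=\; \bigl(U\tilde f - UU^{\T} Y L^{\T}\bigr) \;+\; \bigl(UU^{\T} - \hat U\hat U^{\T}\bigr) Y L^{\T},
\]
so the first piece is \emph{exactly} the oracle error $\sum_k \|f_k - \hat f^o_k\|_n^2$ (no mixing by $M$), and the second piece is handled in one stroke by $\|UU^{\T}-\hat U\hat U^{\T}\|^2\|Y\|^2$ together with Cauchy--Schwarz and the bound $\E\|Y\|^4 \le C\max(p^2,n^2)$. You instead project orthogonally along $\hat U$, which forces you to split the within-$\hat U$ component further via $\hat u_k = \sum_j m_{kj}u_j + w_k$ and to control the noise piece through $\mathrm{tr}(WW^{\T}\hat\Sigma_z)$ and $\E\|\hat\Sigma_z\|^2$. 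Both routes use the same core tools (the $\sin\Theta$ hypothesis, Cauchy--Schwarz to decouple dependent factors, and Gaussian operator-norm moments), and both land on $qn^{-r}+qp/n$. The paper's version is a bit shorter because it never needs the $M,W$ decomposition; your version is slightly more transparent about where the subspace error and the noise interact, and it isolates $\|\hat\Sigma_z\|$ rather than the coarser $\|Y\|$.
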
 }

\begin{remark}
A main appeal of this theorem is that it is agnostic about the choices of $\tU$ and $L$.  In fact, given standard smoothness assumptions, there
is a vast range of literature on designing linear smoothers that attain the needed nonparametric rate.  Examples of linear smoothing include 
	regression in truncated basis function expansions, spline expansions, ridge penalized 
	variants of these, and reproducing kernel Hilbert space regression, see \cite{buja1989} for a summary. 
Thus the crux of applying this theorem, in most cases, will lie
in choosing estimates $\tU$ of $U$ and establishing rates for $\E \left\{ \| \sin \Theta (\tU
,U) \|^4\right\}$, which satisfy the assumptions of the theorem. 
\end{remark}

The next step is to find estimates $\tU$ of $U$ such that $\E \left\{ \| \sin \Theta (\tU,U) \|^4\right\}\le C (p/n)^2$ holds, allowing to take a step toward applying Theorem \ref{mainConsistencyTheorem}. Let $Y = (y_1 \ldots y_n) \in \mathbb{R}^{p\times n} $ be the response data matrix, $ \tilde{F}=(F(x_1) \ldots F(x_n)) \in \mathbb{R}^{p\times n} $ and $ Z=(z_1 \ldots z_n) \in \mathbb{R}^{p\times n}  $, one can write $Y \equiv \tilde{F} + Z$. 


\begin{thm} 
\label{mainConsistencyfirststep}
Suppose that the components $f_k$'s of $f$ satisfy $f_k \in L^2[0, 1]^d$ and are bounded so that $\max_k \| f_k \|_{\infty} \le B$. 
Then the top $q$ left singular vectors $\tU$ of $Y \equiv \tilde{F} + Z$ satisfy $\E \left\{ \| \sin \Theta (\tU ,U) \|^4 \right\} \le C (p/n)^2.$
{\color{black} Consequently, if the rest conditions of Theorem \ref{mainConsistencyTheorem} are satisfied, we may conclude that the General Estimation Procedure results in an estimate of $F$ with rate $ \E \left\{ R_n(\hat{F}) \right\} \lesssim q n^{-r}$. }
\end{thm}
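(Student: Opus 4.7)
The plan is to invoke a sin-theta perturbation bound for the top-$q$ left singular subspace of $Y = \tilde F + Z$.  The first key observation is that the factorization $\tilde F = U F_n$, where $F_n \in \R^{q \times n}$ has entries $(F_n)_{ki} = f_k(x_i)$, identifies $U$ with a basis of the top-$q$ left singular subspace of $\tilde F$ as soon as $F_n$ has rank $q$.  This reduces the problem to bounding two quantities: a lower bound on the signal singular value $\sigma_q(\tilde F)$ and an upper bound on a suitable ``projected noise'' norm.

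For the signal, since the $x_i$'s are i.i.d.\ uniform on $[0,1]^d$ and the $f_k$'s are $L^2$-orthogonal, $\E[(F_n F_n^\T)_{kk'}/n] = \langle f_k, f_{k'} \rangle_{L^2} = \|f_k\|_{L^2}^2 \delta_{kk'}$.  Because $\|f_k\|_\infty \le B$, the entries of $F_n F_n^\T$ are bounded sums of i.i.d.\ variables, and Bernstein's inequality together with a union bound over the $q^2$ entries gives $\|F_n F_n^\T/n - \Lambda\| \le C\sqrt{\log n/n}$ on an event $\mathcal{E}$ with $\P(\mathcal{E}^c) \le n^{-M}$ for any desired $M>0$, where $\Lambda = \mathrm{diag}(\|f_k\|_{L^2}^2)$.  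Weyl's inequality then yields $\sigma_q^2(\tilde F) = \lambda_q(F_n F_n^\T) \ge c_0 n$ on $\mathcal{E}$, for some $c_0 > 0$ depending on $\min_k \|f_k\|_{L^2}^2 > 0$.

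Next I would apply a one-sided Wedin sin-theta inequality exploiting $\sigma_{q+1}(\tilde F) = 0$ to obtain
\[
\|\sin \Theta(\tU, U)\| \lesssim \|Z V\|/\sigma_q(\tilde F),
\]
where $V \in \R^{n \times q}$ collects the right singular vectors of $\tilde F$.  The crucial point is that $V$ depends on the design alone, while $Z$ is independent of the design, so conditional on the $x_i$'s the matrix $ZV$ is $p \times q$ with i.i.d.\ $\mathcal{N}(0,\sigma^2)$ entries.  Standard Gaussian-matrix moment bounds then give $\E\|Z V\|^4 \lesssim (p+q)^2 \lesssim p^2$ once $p \gtrsim q$.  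Combining these pieces,
\[
\E \bigl[\|\sin\Theta(\tU, U)\|^4 \ind_{\mathcal{E}}\bigr] \lesssim \E\|Z V\|^4/n^2 \lesssim (p/n)^2,
\]
while the contribution from $\mathcal{E}^c$ is absorbed via the trivial bound $\|\sin\Theta\| \le 1$ and the polynomial decay of $\P(\mathcal{E}^c)$ (taking $M = 2$ suffices, since $(p/n)^2 \ge n^{-2}$).  The second assertion of the theorem then follows by feeding the resulting bound into Theorem~\ref{mainConsistencyTheorem}.

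The main obstacle is securing the tight form of the sin-theta bound: a naive Davis--Kahan with the full noise $\|Z\| \asymp \sqrt p + \sqrt n$ in the numerator yields only $\|\sin\Theta\| \lesssim 1$, which is vacuous in the regime of interest.  The gain comes from replacing $\|Z\|$ by the projected noise $\|Z V\| \asymp \sqrt p$, while the denominator $\sigma_q(\tilde F) - O(\sqrt p + \sqrt n)$ remains of order $\sqrt n$ under $p = O(n)$.  Setting up this refined perturbation inequality carefully in expectation, and cleanly handling the exceptional event $\mathcal{E}^c$ on which the signal gap fails to concentrate, is where the real work lies.
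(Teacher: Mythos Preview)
Your high-level structure matches the paper's: first show that the left singular subspace of $\tilde F$ equals $\mathrm{span}(U)$ and that $\sigma_q(\tilde F)\asymp n^{1/2}$ on an event $\mathcal E$ of probability at least $1-O(n^{-2})$ (the paper does this via Hoeffding on the Gram matrix $n^{-1}\tilde f\tilde f^\T$ and Weyl, just as you propose with Bernstein), then apply a singular-subspace perturbation bound conditional on the design, and absorb $\mathcal E^c$ with the trivial bound $\|\sin\Theta\|\le 1$.

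The substantive difference is the perturbation step, and there your sketch has a real gap.  The ``one-sided Wedin'' bound $\|\sin\Theta(\hat U,U)\|\lesssim \|ZV\|/\sigma_q(\tilde F)$ that you invoke is not available without a separation condition: every classical Wedin variant places either $\sigma_q(\tilde F)-\sigma_{q+1}(Y)$ or $\sigma_q(\tilde F)-\|Z\|$ (or $\sigma_q^2(\tilde F)-\|Z\|^2$) in the denominator.  Since $\sigma_q(\tilde F)\asymp c_0 n^{1/2}$ with $c_0=\min_k\|f_k\|_{L^2}$ while $\|Z\|\asymp p^{1/2}+n^{1/2}$, your claim that the denominator ``remains of order $\sqrt n$'' silently imposes the signal-to-noise condition $c_0>1$ (noise variance normalized to $1$), which the theorem does not assume.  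Without that, the Wedin denominator can vanish and the bound is vacuous.

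The paper sidesteps this by invoking (and extending to fourth moments) Theorem~3 of \citet{cai2016rate}, which delivers
\[
\E\bigl[\|\sin\Theta(\hat U,P)\|^4\mid \mathcal D\bigr]\le C\Bigl[\tfrac{p(\sigma_q^2(\tilde F)+n)}{\sigma_q^4(\tilde F)}\Bigr]^2\wedge 1
\]
under only the mild condition $\sigma_q^2(\tilde F)\gtrsim (np)^{1/2}+p$, which is automatic on $\mathcal E$ when $p=O(n)$.  This refined bound is exactly the ``tight form of the sin-theta bound'' you flag as the main obstacle; the paper's proof spends most of its effort re-deriving the Cai--Zhang argument for fourth moments.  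An elementary alternative that would also close your gap is to pass to the symmetric problem $YY^\T=\tilde F\tilde F^\T+nI_p+E$ with $E=\tilde FZ^\T+Z\tilde F^\T+(ZZ^\T-nI_p)$ and apply the Yu--Wang--Samworth form of Davis--Kahan, whose denominator is the \emph{unperturbed} gap $\sigma_q^2(\tilde F)$; then $\E\|E\|^4\lesssim (np)^2$ on $\mathcal E$ yields $(p/n)^2$ directly.  Either route works, but neither is the plain Wedin bound you cite.
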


\begin{remark}
Under the constraint $p \lesssim n^{1-r}$, this reduces to the standard rate of recovery for $q$ functions by $L$. In addition, when $ p = o(n)$, the risk remains $o(1)$. Theorem \ref{mainConsistencyTheorem} is proven by a natural decomposition of the estimation error together with an appeal to linearity.
\end{remark}

\begin{remark}
The constraint that $p \lesssim n^{1-r}$ arises primarily because the bound $\E \left\{ \| \sin \Theta (\tU ,U) \|^4 \right\} \le C (p/n)^2$ is the best that we can achieve based on \citet{cai2016rate}. Although beyond the scope of this paper, we note that there are
techniques available for sparsity constrained estimation of the singular value decomposition of a matrix \citep{witten2009penalized, kuleshov2013fast, ma2013sparse, yang2014sparse, lei2015sparsistency}; as in the case of $\ell_1$ penalized estimation of regression parameter, one might expect that these allow improvement to $\E \left\{ \| \sin \Theta (\tU ,U) \|^4 \right\} \le C (p^{\gamma}/n)^2$ for some $\gamma \in (0,1)$, or even possibly powers of $\E \left\{ \| \sin \Theta (\tU ,U) \|^4 \right\} \le C (\log p/n)^2$. Indeed, we feel that this would be an interesting avenue for future research. 
\end{remark}

\begin{remark}
Theorem \ref{mainConsistencyfirststep} states that under the uniform sampling mechanism of the design points $x_i$'s, we have $\E \left\{ \| \sin \Theta(\tU ,U) \|^4 \right\} \le C (p/n)^2$. This is proven by  taking advantage of the fact that under reasonable smoothness
assumptions, the inner product $\langle \cdot,\cdot \rangle_n$ and corresponding norm provide a good approximation to $\langle \cdot,\cdot \rangle_{L^2}$ and its corresponding norm.  This allows to conclude that the span of $\tilde{F} \equiv (F(x_1) \ldots F(x_n))$ is the
same as the span of $U$, then one can apply the results of \cite{cai2016rate} to derive
the desired bound on $\E \left\{ \| \sin \Theta (\tU ,U) \|^4 \right\}$.  
\end{remark}

We now provide some examples of standard linear smoothers and the rates they attain when used in the general estimation procedure. The details of these examples and the rates they attain are deferred to the Section \ref{lemmasMainExamples}. \\

\textit{Example 1} (\textrm{Local Polynomial Smoothing})
Suppose that the $f_k$'s lie in the \emph{H\"older class},  $\Sigma(\beta, L)$ and we perform a variant of local polynomial smoothing described in equation (\ref{LPSestimate}) to arrive at estimates $\hat{f}_k$.   Thus the $f_k$'s are $l = \lfloor \beta \rfloor$ times differentiable, where $\lfloor \beta \rfloor$ represents the largest integer strictly less than $\beta$, with the $l$th derivative $f^{(l)}$ satisfying
\[  |f^{(l)}(x) - f^{(l)}(y)| \le C | x - y |^{\beta - l},  \]
for all $x, y$ in the domain of interest.   {\color{black} With fixed parameters used in all directions, the procedure is direction invariant resulting in estimates at the data points $x_i$ that are linear in the data $\hat{f}_k = L \hat Y^*_{\cdot k}$.}  As we show in Section \ref{lemmasMainExamples},  $\| L \| \le C$ and $L$ attains rate $r = 2\beta / (1 + 2\beta) $.  If we further assume the $f_k$'s are bounded, $l \ge 1$, we may apply Theorem \ref{mainConsistencyfirststep} to find that the corresponding General Estimation Procedure yields an estimate $\hat F$ which satisfies $\E \left\{R_n(\hat F) \right\} \lesssim q n^{-2\beta / (1 + 2\beta) }$.  \\

\textit{Example 2} (\textrm{Truncated Series Expansions})
Suppose that the $f_k$'s lie in the Sobolev class of periodic functions of integer smoothness $\beta \ge 1$, denoted by $W^p(\beta, L)$. To define this class of functions, we start with the Sobolev class $W(\beta, L)$ defined by
\[ W(\beta, L) = \left\{ f \in L^2[0, 1] : f^{(\beta - 1)} \in \mathcal{C}[0, 1] \text{ and } \int_0^1 (f^{(\beta)})^2 \le L \right\} , \]
where $\mathcal{C}[0, 1]$ is the collection of absolutely continuous functions on $[0, 1]$.  The function class of interest, $W^p(\beta, L)$, is then defined by
\[ W^p(\beta, L)  =  \left\{ f \in  W(\beta, L): f^{(j)} (0) = f^{(j)}(1) \text{ for } j=0, 1, \dots, \beta - 1 \right\} .  \]
Fix the Fourier basis, where $\varphi_1 = 1$ and $\varphi_{2k}(x) = {2}^{1/2} \cos(2 \pi x)$, $\varphi_{2k + 1} (x) = {2}^{1/2} \sin(2 \pi x)$ for $k \ge 1$. {\color{black} If we estimate the $f_k$ by regressing the observations on the first $N \sim n^{1/(1 + 2\beta)}$  basis elements the resulting procedure is direction invariant, resulting in estimates at the data points $x_i$ that are linear in the data $\hat{f}_k = L \hat Y^*_{\cdot k}$. Further, we find that $\max_k \E \| \hat{f}^o_k - f_k \|_n^2 \lesssim n^{-2\beta / (1 + 2\beta) }$ and $\| L \| \le 1$. }  If we further assume that $f_k$'s are bounded, we may apply Theorem \ref{mainConsistencyfirststep} to arrive at an estimate $\hat F$ which satisfying $\E \left\{R_n(\hat F) \right\} \lesssim q n^{-2\beta / (1 + 2\beta) }$.  \\

\textit{Example 3} (\textrm{Reproducing Kernel Hilbert space regression})
Let $K(\cdot, \cdot)$ be a bounded, positive semidefinite kernel function $[0, 1]^2$, and $\mathcal{H} = \mathcal{H} (K)$ the associated reproducing kernel Hilbert space on $[0, 1]$ with norm $ \| \cdot \|_{ \mathcal{H}} $.  {\color{black} Suppose that the $f_k$'s lie in $\mathcal{H}$ with bounded norm $ \| f_k \|_{ \mathcal{H}} $, and we recover the $f_k$ by estimates $\hat{f}_k$ formed by the reproducing kernel Hilbert space penalized estimation
\[ \hat{f}_{k} = \arg  \min_{g \in \mathcal{H}} \left\{ \frac{1}{2}  \| \hat Y^*_{\cdot k} - g\|_n^2 + \kappa_n \| g \|_{\mathcal{H}}^2 \right\},  \]
which, by the representer theorem,  take the form of linear smoothers. If we define the kernel matrix $K = (K(x_i, x_j)/n)_{i, j = 1}^n$, we may write
\[ \hat{f}_{k} = n^{-1/2} \sum_{i=1}^n \hat{\alpha}_{i, k} K(\cdot, x_i)  \hspace{5pt} \text{ where }  \hspace{5pt} \hat{\alpha}_k = n^{-1/2}(K  + \kappa_n I_n )^{-1} \hat Y^*_{\cdot k}  \]
so that $( \hat{f}_{k} (x_1) , \dots, \hat{f}_{k} (x_n) )^{\T}  = K (K  + \kappa_n I_n )^{-1} \hat Y^*_{\cdot k}   = L \hat Y^*_{\cdot k} $
is linear in the data, with $L = K (K  + \kappa_n I_n )^{-1}$ and $\| L \| \le 1$ and the procedure is direction invariant.}  Suppose that the eigenvalues $\lambda_i$ in the eigendecomposition of $K$ scale as $\lambda_i \sim i^{-\alpha}$, $\alpha > 1$.  Then 
with properly chosen $\kappa_n$, we have $\max_k \E \left( \| \hat{f}_k - f_k \|_n^2 \right) \lesssim n^{-\alpha / (1 + \alpha) }$ and consequently an estimate $\hat F$ which satisfies $\E \left\{R_n(\hat F) \right\} \lesssim q n^{-\alpha / (1 + \alpha) }$.  


\subsection{Implementation and parameter tuning}\label{estimation}




To be specific, we adopt the reproducing kernel Hilbert space procedure in Example 3. The reproducing kernel Hilbert space method has been applied to various nonparametric/functional regression models  \citep{lin2006component, yuan2010reproducing, zhang2011linear, du2014penalized, sang2018sparse} with straightforward implementation available in software such as R.


For convenience, we use the Gaussian radial basis kernel function defined as $ K(x_1, x_2)$ $=\exp (-\|x_1-x_2\|^2/\rho )$. There are three tuning parameters involved in our estimation procedure: the number of retained dimension $q$ in the first-step estimation, the scale parameter $\rho$ in Gaussian radial basis function and the regularized parameter $\kappa_n$ in the second-step estimation. To select these parameters, we first fix the dimension at $ q $, and tune both $\rho$ and $\kappa_n$. In an ideal scenario, we want to tune both $\rho$ and $\kappa_n$ on a two dimensional fine grid, say using cross validation, however, this substantially increases the computation cost. Our preliminary studies show that the estimator is quite robust to the choice of $ \rho $. Therefore, to reduce the computational effort, we set $\rho $ as the median of $\{1\leq i< j \leq n: \|x_i -x_j\|^2\} $, denoted it by $ \rho^M$, previously suggested by \cite{gretton2012kernel, kong2016testing}.  For the regularization parameter $ \kappa_n $, we adopt the 10-fold cross validation procedure proposed in \citet{pahikkala2006fast}, and denote the selected parameter by $ \kappa_n^{(q)} $ when the dimension is fixed at $ q $. Let $ \hat {F}^{(q)} $ be the corresponding estimator of $ F $ using the retained dimension $q$ with tuning parameters $ \rho^M$ and $ \kappa_n^{(q)} $, and $ q $ can be chosen by minimizing
\begin{eqnarray}
\label{aicstar}
\AIC(q) = \log\left\{ V(q,\hat {F}^{(q)})   \right\} + 2  \left( \frac{q}{n}\right),
\end{eqnarray}
where $V(q,\hat {F}^{(q)} ) = \frac{1}{2n} \sum_{i=1}^n \|y_i-\hat {F}^{(q)} (x_i)\|_2^2$. 

\section{Numerical Examples}\label{numerical}
\subsection{Simulation study}\label{simulation}
In this subsection, we perform simulation study to evaluate the performance of the proposed method. The $U=(u_1 \ldots u_q) $ is generated by orthonormalizing a $p\times q $ matrix with all elements being independent and identically distributed standard normal. The $x_i$'s are independently generated from uniform distribution on $[0, 1]$. The $f_k$'s are independently generated from a zero mean Gaussian process with compactly supported covariance function $C$,
\[   C_{\alpha,\beta}(s,t)  =  \beta  \max\{0,(1-r)^5\}  (8 r^2 + 5r + 1),  \]
where $r \equiv r_{\alpha}(s,t) = |s - t|/\alpha$, see \cite{rasmussen2006gaussian} for details. We set $\alpha = 0.5$ and $\beta = 15$. The error $ z_{ij} $ are generated independent and identically distributed from standard normal for $ 1\leq i\leq n $ and $ 1\leq j\leq p $. The response $ y_i $ is obtained by $ y_i=\sum_{k = 1}^q  f_k(x_i) u_k+z_i$. A suitable comparison would be conducted against individual nonparametric regression of $ Y $ on $x$ in a curve-by-curve manner.  In particular, we compare with the method that fits the $j$th component of $ Y $ on $ x$ nonparametrically for each $ 1\leq j\leq p $. For fair comparison, we also use the reproducing kernel Hilbert space regression with radial basis function kernel for curve-by-curve nonparametric recovery as well. For the tuning parameters, we use the selection method described in Section \ref{estimation}. Specifically, for each curve nonparametric regression, we set the scale parameter $ \rho $ in radial basis function kernel as the median of $\{1\leq i< j \leq n: \|x_i -x_j\|^2\} $, and select the regularization parameter $ \kappa_n $ using the 10-fold cross validation. For nonparametric principal subspace regression, we report the estimation error $\| F - \hat{F} \|_n^2=\frac{1}{n}\sum_{i=1}^n \|F(x_i)-\hat{F}(x_i)\|^2 $ and the estimated dimension $ \hat q $ selected by the $\AIC$. For curve-by-curve nonparametric recovery, we only report the estimation error since the procedure fits each curve individually. We consider different combinations of $ (n,p,q) $, and for each combination, we perform 100 Monte Carlo studies. 
{ \begin{table}[htbp]
\centering
\caption{Simulation results: the average estimation errors for our nonparametric principal subspace regression method (``NPSR error'') and the curve-by-curve nonparametric regression (``Nonparametric error''), and their associated standard errors in the parentheses are reported. The selected $q^*$ by $\AIC$ is also reported. The results are based on 100 Monte Carlo repetitions. }
\newsavebox{\tableboxa}
\begin{lrbox}{\tableboxa}
\begin{tabular}[l]{cccccccccccc}

\hline \hline
$n$ &$q$& $p$& NPSR error  &   Nonparametric error  & $q^*$ \\
\\

&  & 10 & 0.535 (0.038)  & 0.902 (0.039)  & 2.000 (0.000)   \\  
     &2& 20 & 0.708 (0.046)  & 1.537 (0.038)  & 2.000 (0.000)  \\  
     &  & 40 & 1.069 (0.051)  & 2.538 (0.040)  & 1.980 (0.014)   \\  
  128&\\ 
     &  & 10 & 1.208 (0.088)  & 1.293 (0.066)  & 3.830 (0.038)  \\  
     & 4 & 20 & 1.430 (0.078)  & 1.821 (0.039)  & 3.850 (0.038)  \\  
     &  & 40 & 2.147 (0.059)  & 3.237 (0.062)  & 3.600 (0.049)  \\  
     \\
     \hline\\
     &  & 20 & 0.451 (0.034)  & 0.890 (0.024)  & 2.000 (0.000)  \\  
     & 2 & 40 & 0.685 (0.059)  & 1.464 (0.028)  & 1.990 (0.010)  \\  
     & & 60 & 0.784 (0.024)  & 2.021 (0.036)  & 2.000 (0.000)  \\  
   256&\\ 
     &  & 20 & 0.856 (0.045)  & 1.158 (0.046)  & 3.920 (0.031)  \\  
     & 4 & 40 & 1.344 (0.074)  & 1.783 (0.034)  & 3.810 (0.039)  \\  
     & & 60 & 1.575 (0.047)  & 2.429 (0.033)  & 3.740 (0.044)  \\  
     \\
     \hline\\
     &  & 40 & 0.376 (0.032)  & 0.867 (0.017)  & 2.000 (0.000)  \\  
     & 2 & 60 & 0.478 (0.025)  & 1.172 (0.020)  & 2.000 (0.000)  \\  
     &  & 80 & 0.512 (0.011)  & 1.436 (0.022)  & 2.000 (0.000)  \\  
      512&\\ 
     &  & 40 & 0.876 (0.057)  & 1.189 (0.033)  & 3.910 (0.029)  \\  
     & 4 & 60 & 1.073 (0.054)  & 1.563 (0.031)  & 3.850 (0.036) \\  
     &  & 80 & 1.165 (0.031)  & 1.857 (0.028)  & 3.870 (0.034) \\  
      \\
   \hline \hline
\end{tabular}
\end{lrbox}
\label{tab1:simu1} 
\scalebox{1}{\usebox{\tableboxa}}
\end{table} }
From Table \ref{tab1:simu1}, one can see that our method outperforms the curve-by-curve nonparametric regression for all cases. For fixed $n$ and $q$, the recovery results from nonparametric principal subspace regression tend to improve at a faster rate as $p$ increases. Besides, one sees that the $\AIC$ is capable of choosing $ q^* $ close to the true value $ q $. We plot the estimates of the first four components $ f_1 $, $f_2$, $ f_3$ and $ f_4 $ from a randomly selected Monte Carlo run in the case of $ (n, p, q)=(256,40,4)$ in Figure \ref{simPlots}, showing good recovery of each nonparametric component. 
\begin{figure}[htbp]
  \subfigure[]{\includegraphics[height=2in,width=3in]{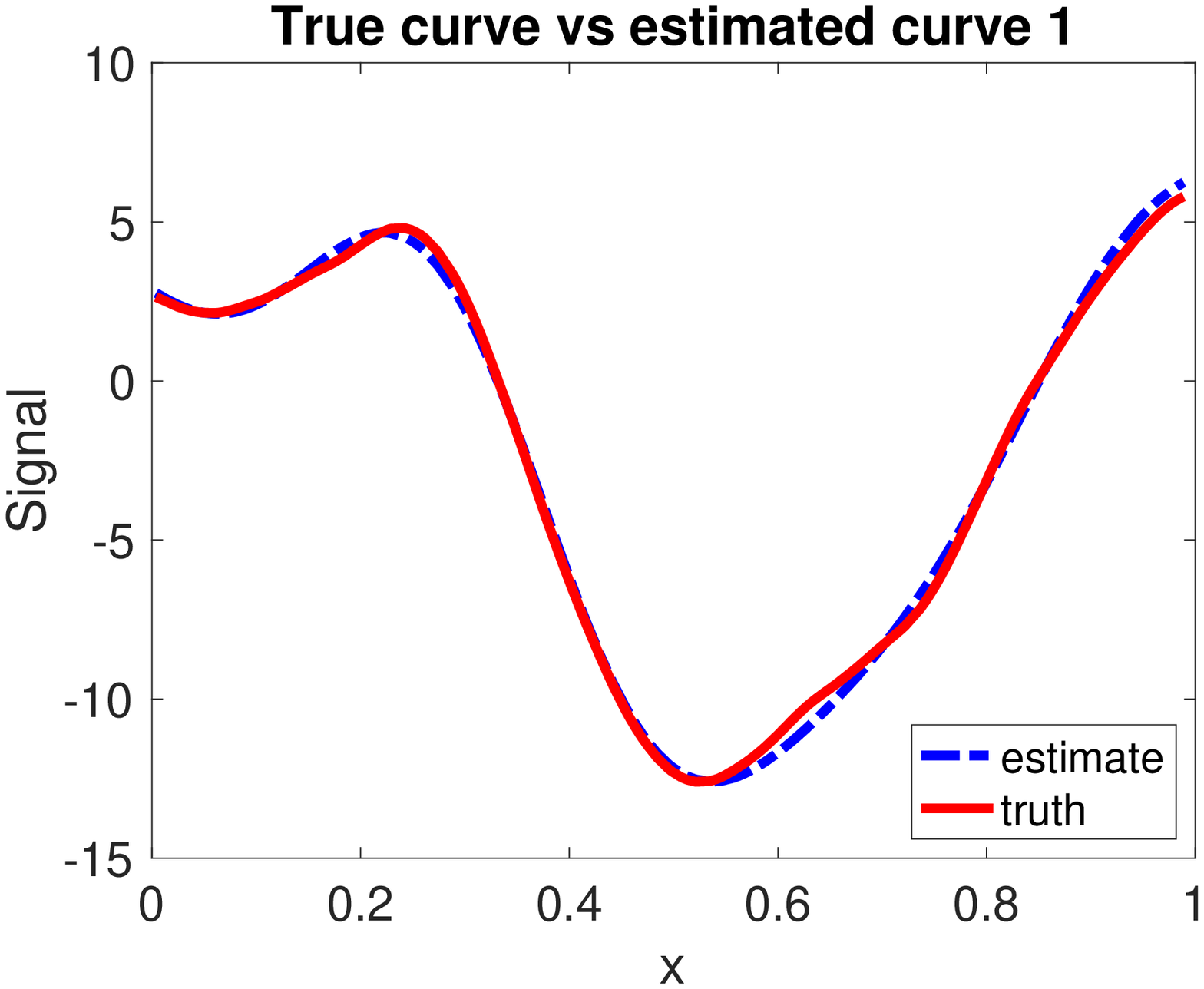}}
  \subfigure[]{\includegraphics[height=2in,width=3in]{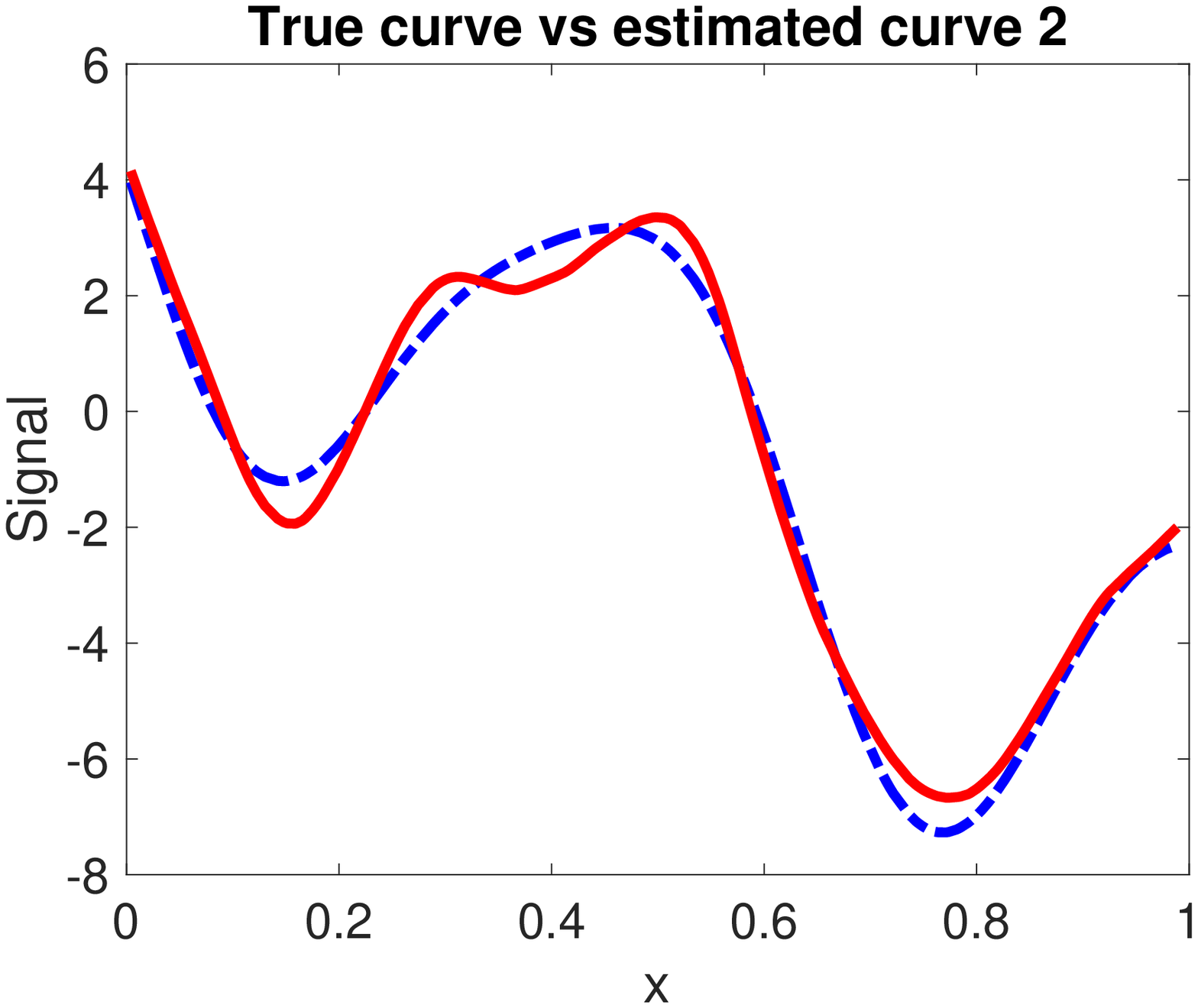}}
  \subfigure[]{\includegraphics[height=2in,width=3in]{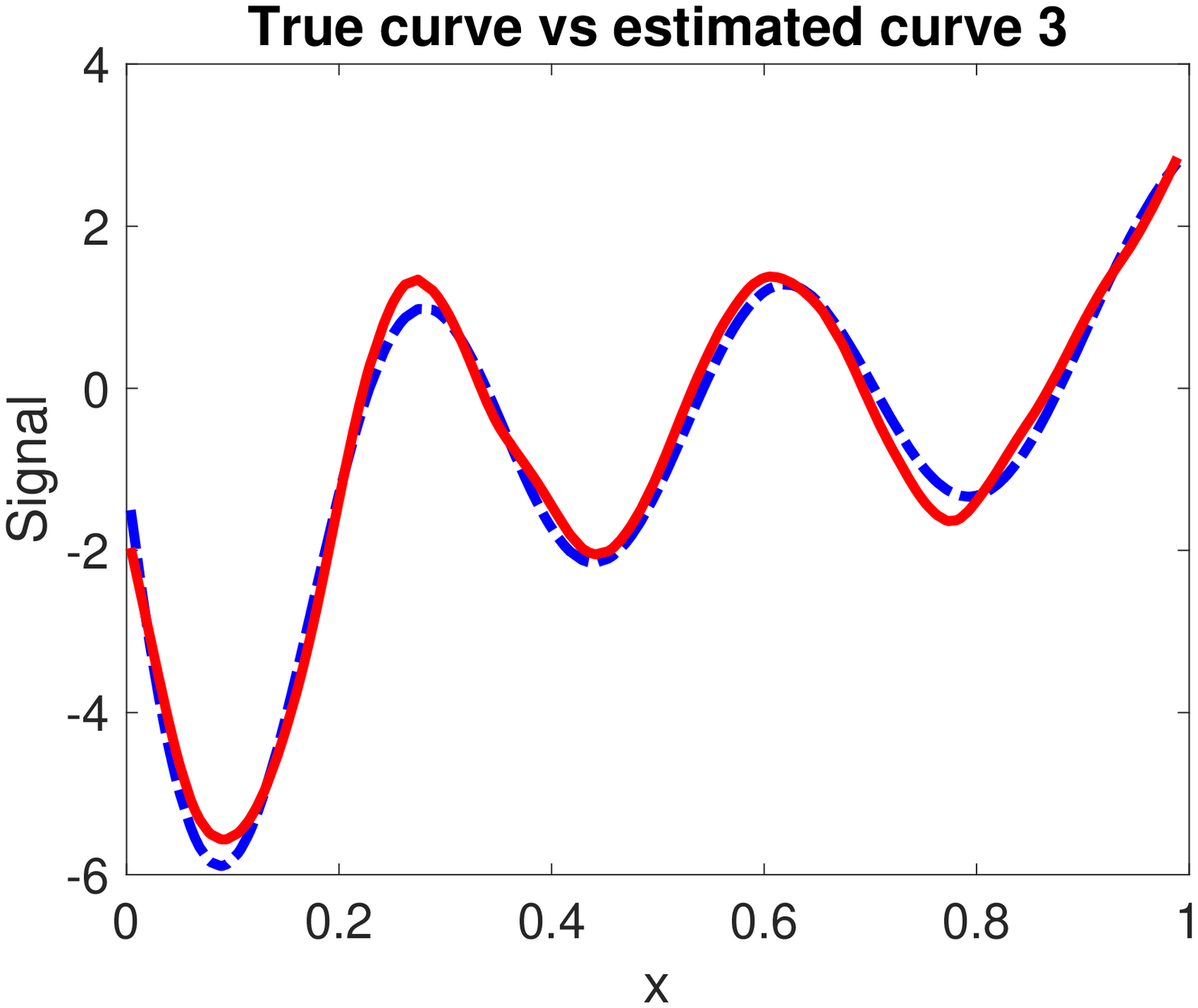}}
  \subfigure[]{\includegraphics[height=2in,width=3in]{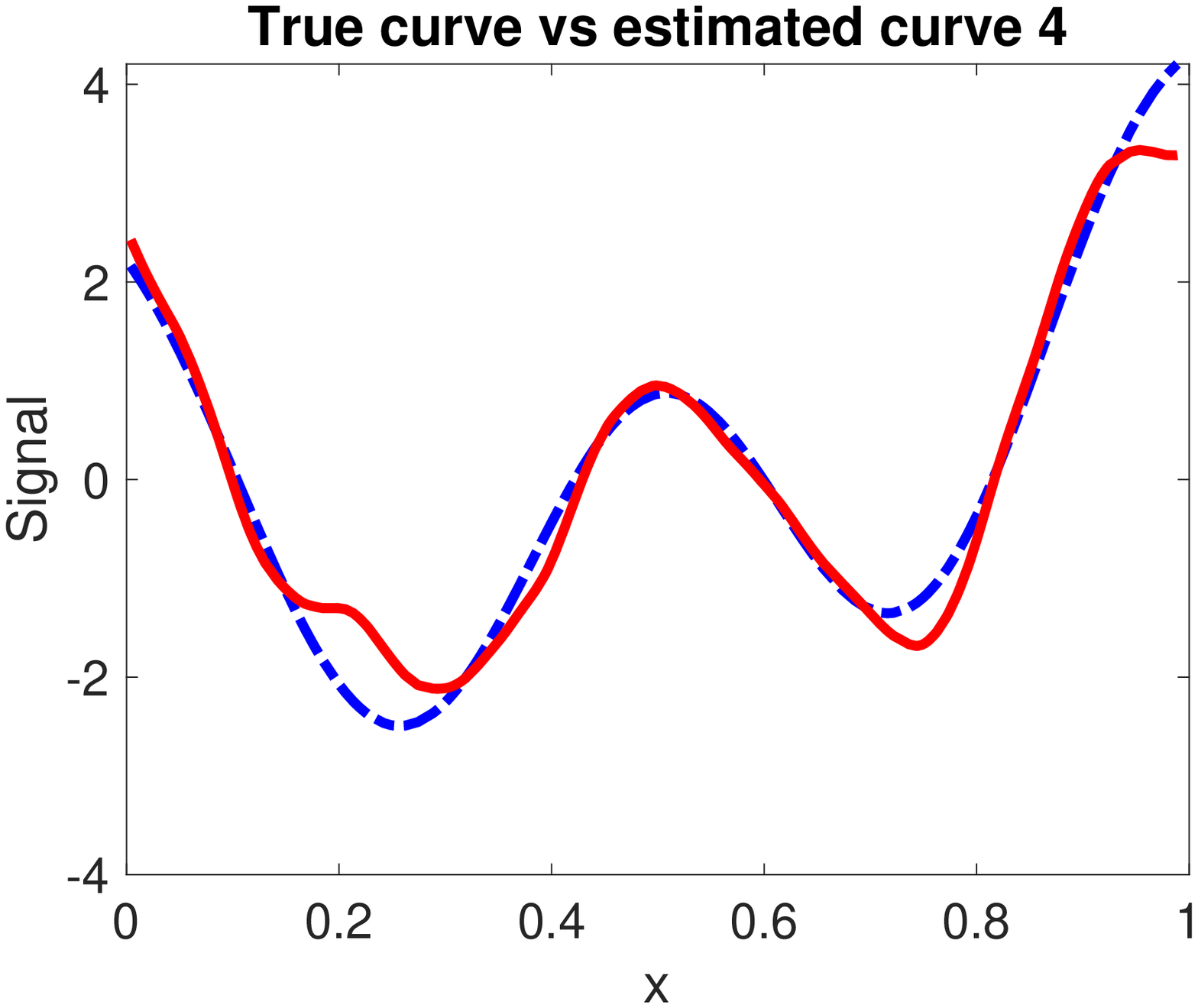}}
\caption{Simulation results: Panels (a)--(d) plot the estimated $\hat{f}_1$, $\hat{f}_2$, $\hat{f}_3$, $\hat{f}_4$ from our nonparametric principal subspace regression and their corresponding true functions $ f_1 $, $ f_2$, $ f_3$ and $ f_4 $ from a randomly selected Monte Carlo run in the case $ (n,p,q)=(256,40,4)$.}
\label{simPlots}
\end{figure}
We have performed additional simulation studies, where the error $ z_i $'s are correlated across $ 1\leq i\leq n $ and the components within each $ z_i $ are also correlated, reflecting more realistic settings in real applications. The additional simulation results are included in Section \ref{additionalsim} of the supplementary file, and indicate that our method still outperforms curve-by-curve nonparametric regression.

\subsection{Application to an electroencephalogram  study}\label{EEGdata}
We apply the proposed method to an electroencephalogram dataset, which is available at 
\url{https://archive.ics.uci.edu/ml/datasets/EEG+Database}. 
The data were collected by the Neurodynamics Laboratory
and contain 122 subjects. Researchers measured the voltage values from 64 electrodes placed on each subject's scalps sampled at 256 Hz for 1 second. As electroencephalogram data are notoriously noisy while there are known to be strong correlations between different electrodes, the data from each subject may be considered as a sample from model (\ref{nonparametricFactorData}). In particular, for each subject, we obtain a data matrix $ Y=(y_1 \ldots y_n)\in \mathbb{R}^{p\times n} $ with $ p=64 $ and $ n=256 $. We fit the nonparametric principal subspace regression to the data matrix obtained from each subject. The average retained dimension selected by the proposed $\AIC$ among these 122 subjects is $6.959$ with standard error $0.113$. We have plotted the estimates of the first three functional components $ f_1 $, $f_2$ and $ f_3$ from a randomly selected subject in Figure \ref{eegPlots} (b)-(d). The curves show clear nonlinear patterns along the covariate time, which can not be captured by either classical factor model, singular value decomposition or multivariate response linear regression.  

\begin{figure}[htbp]
 \subfigure[]{\includegraphics[height=2in,width=3in]{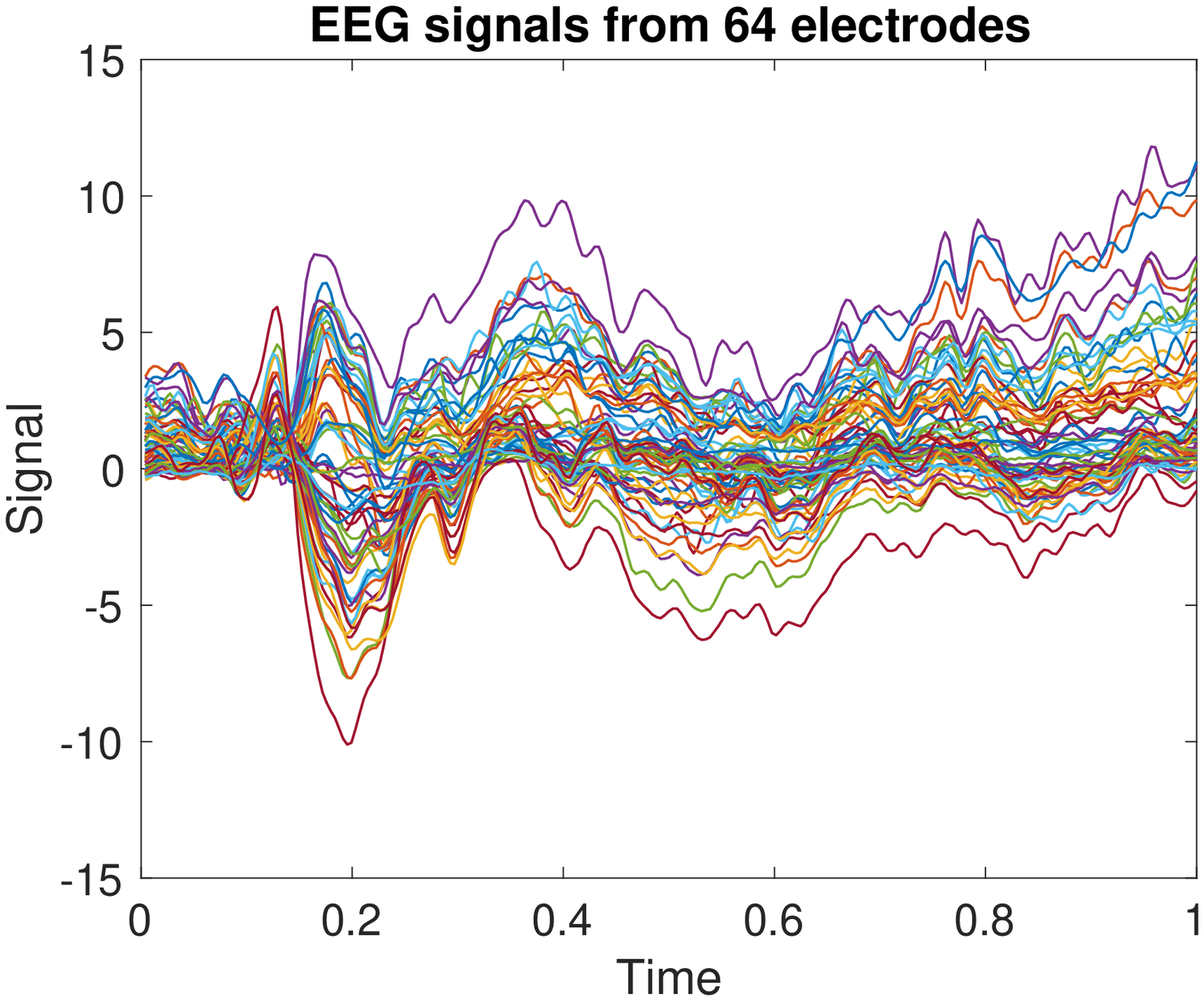}}
 \subfigure[]{\includegraphics[height=2in,width=3in]{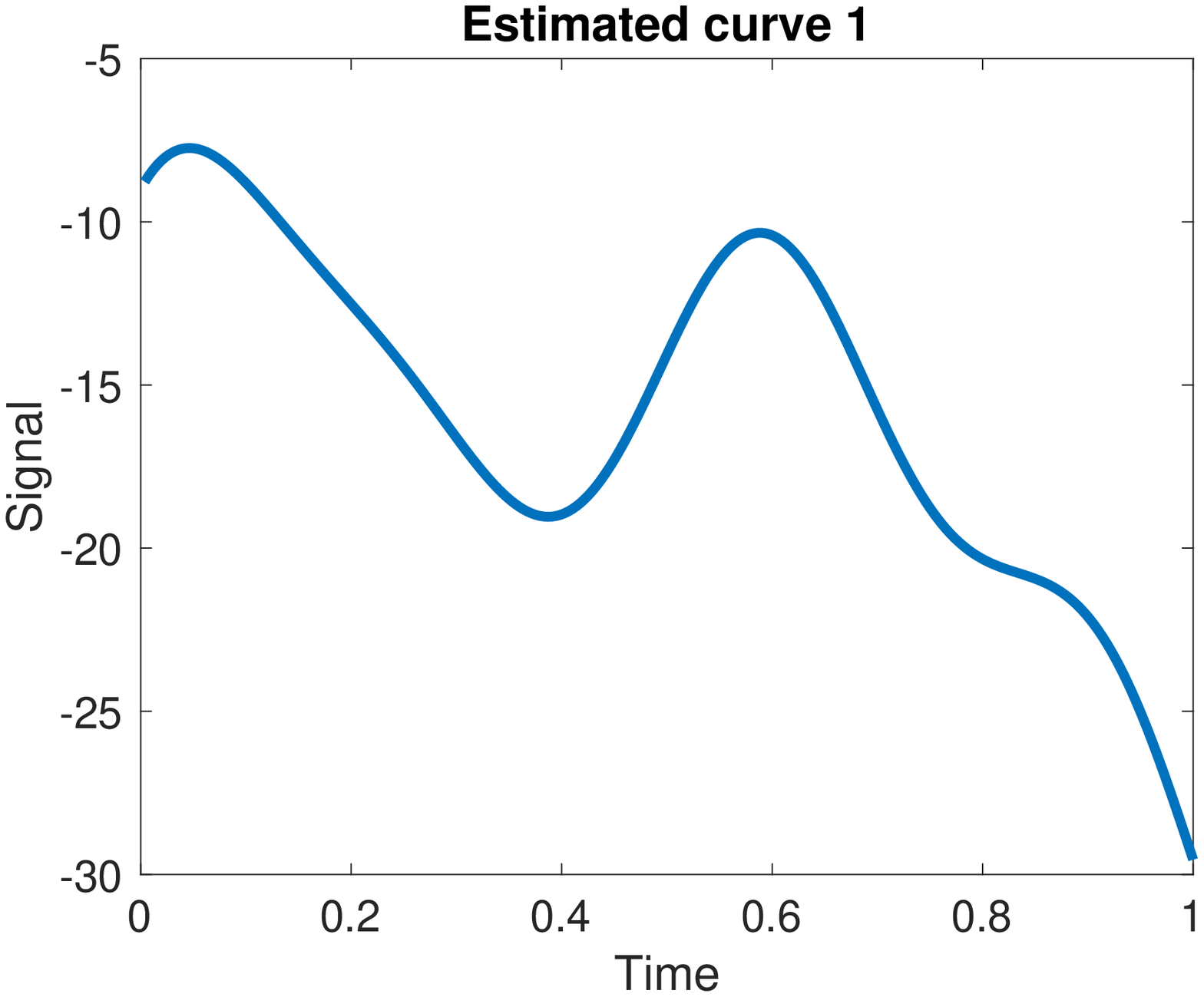}}
 \subfigure[]{\includegraphics[height=2in,width=3in]{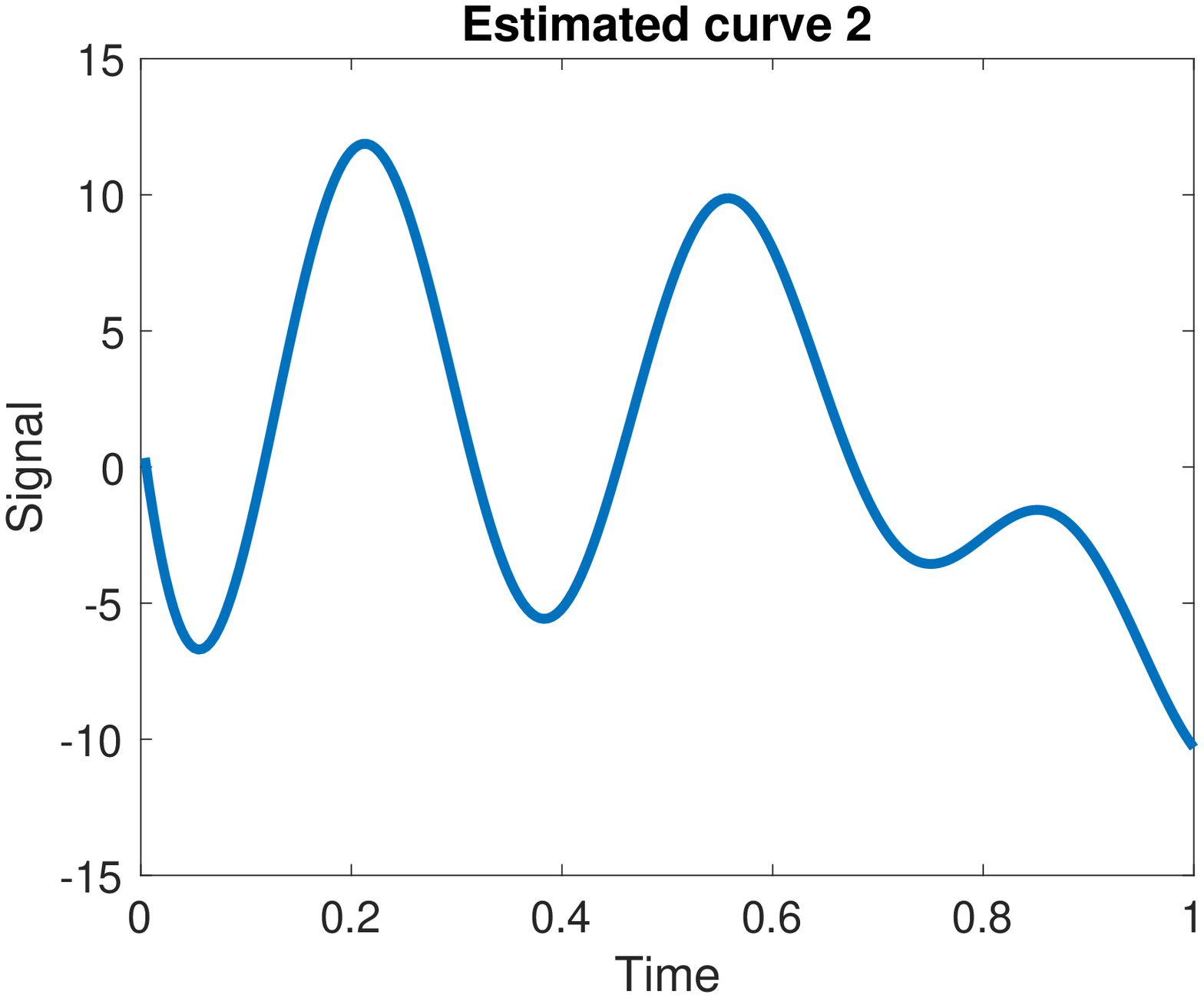}}
 \subfigure[]{\includegraphics[height=2in,width=3in]{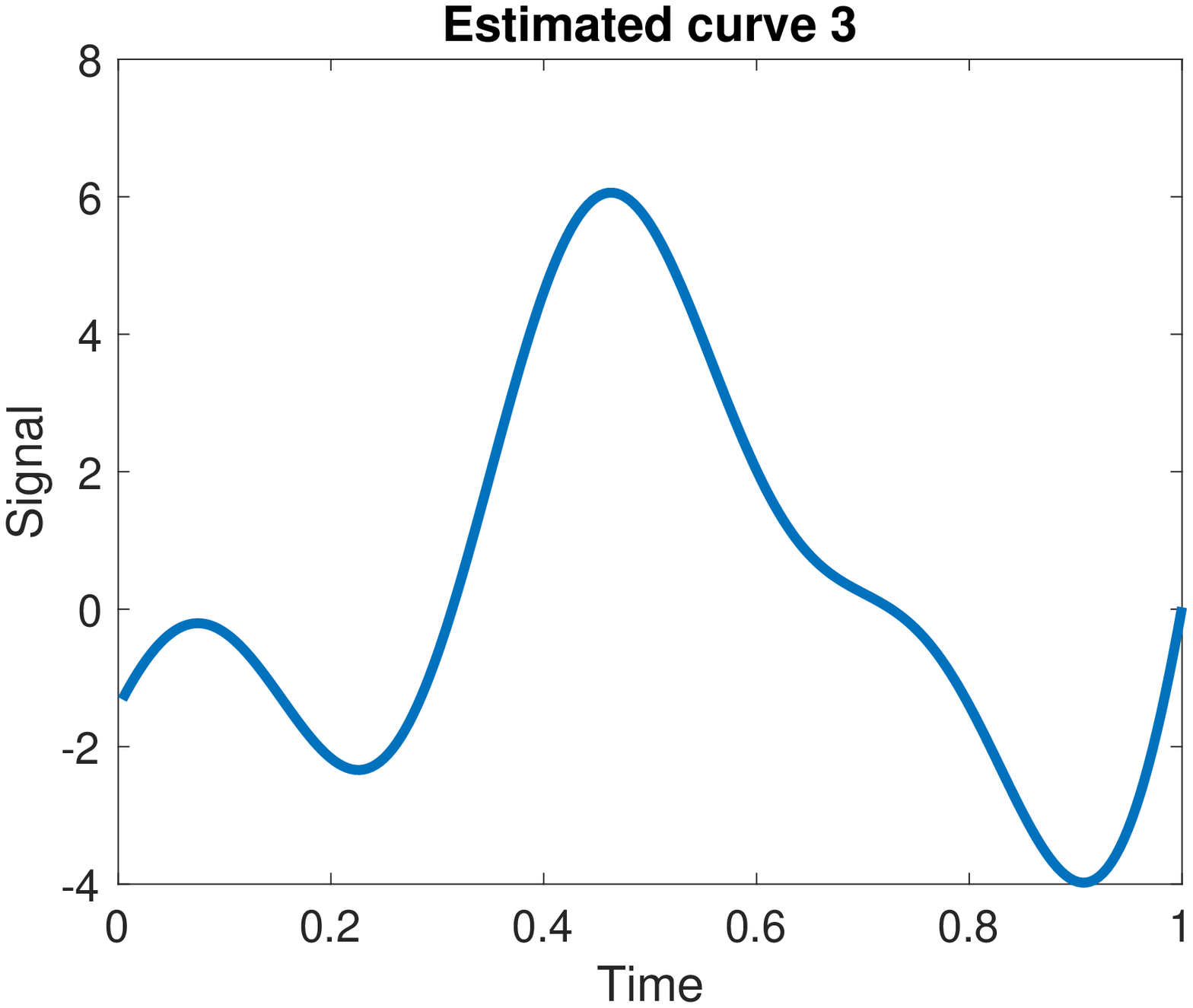}}
\caption{Real data results: Panel (a) plots the electroencephalogram signals detected from 64 electrodes of the scalp of one randomly selected subject. Panels (b)-(d) plot the estimates of the first three functional components $\hat{f}_1$, $\hat{f}_2$ and $\hat{f}_3$ from a randomly selected subject. }
 \label{eegPlots}
\end{figure}

To compare  the prediction performance, we also fit curve-by-curve nonparametric regression to the signals obtained from each of the $64$ electrodes. For each subject, we randomly reserve $10\%$ of data as the test set: $\mathcal{S}_{\rm{test}} \subseteq \{ 1,\ldots, 256\}$ such that $\left| \mathcal{S}_{\rm{test}} \right| / 256 \approx 10 \%$, while using the rest as the training set, and report the prediction errors $\left| \mathcal{S}_{\rm{test}} \right|^{-1} \left\{\sum_{i \in  \mathcal{S}_{\rm{test}}} (\|Y_{i} - \hat{F}(x_i) \|^2/64) \right\}$ for both approaches. The average prediction error for nonparametric principal subspace regression over the 122 subjects is $1.984$ with standard error $0.156$, while that obtained by the curve-by-curve nonparametric regression is $2.344$ with standard error $0.153$. 

\section{Proofs of Main Theorems}\label{Appendixprooftheorem}
We first introduce the notations used in the rest of this paper, some of the notations have been introduced before. Recall we have proposed a nonparametric model $ y_i  = \sum_{k = 1}^q  f_k(x_i) u_k  + z_i \in \Rp $, where $ z_i=(z_{i1}, \ldots, z_{ip})^{\T} $ follows independent and identically distributed $ \mathcal{N}(0,\sigma ^2I_p) $. Without loss of generality, in the proof we assume $ \sigma^2=1 $. 

Let $Y = (y_1 \ldots y_n) \in \mathbb{R}^{p\times n} $ be the response data matrix, $ \tilde{F}=(F(x_1) \ldots F(x_n))$ $\in \mathbb{R}^{p\times n} $ and $ Z=(z_1 \ldots z_n) \in \mathbb{R}^{p\times n}  $, one can write $Y \equiv \tilde{F} + Z$. Let $ U=(u_1 \ldots u_q)\in \mathbb{R}^{p\times q} $, and $\tU=(\hat{u}_1 \ldots \hat{u}_q)\in \mathbb{R}^{p\times q}$ the estimate of $ U $. Define $ \tilde{f}_k=(f_k(x_1),\ldots, f_k(x_n))^{\T} \in \mathbb{R}^{n} $ for $ 1\leq k\leq q $ and $ \tilde{f}=(\tilde{f}_1 \ldots \tilde{f}_q)^{\T} \in \mathbb{R}^{q \times n} $ so that $\tilde F = U \tilde f$. We further define $ \tilde{\hat{f}}_k=(\hat{f}_k(x_1),\ldots, \hat{f}_k(x_n))^{\T}\in \mathbb{R}^{n} $ for $ 1\leq k\leq q $ and define $ \tilde{\hat{f}}=(\tilde{\hat{f}}_1 \ldots \tilde{\hat{f}}_q)^{\T} \in \mathbb{R}^{q \times n} $ so that we may write $\tilde{ \hat F} \equiv (\hat F(x_1) \ldots \hat F(x_n))  \in \mathbb{R}^{p\times n} $ as $\tilde{ \hat F} = \hat U \tilde{\hat{f}}$.

\begin{proof}[Proof of Proposition \ref{svdRepresentation}] 
We may consider $F$ as an operator $F : L^2[0,1]^d \rightarrow \Rp$ mapping $g \in L^2[0,1]^d$ to
\[  Fg = (\langle F_1,g \rangle_{L^2}, \dots ,\langle F_p,g \rangle_{L^2})  \in \Rp.  \]
In this case we note that under appropriate inner products
\[   F = \sum_{k=1}^p e_k \otimes F_k    \hspace{5pt}\text{ and }\hspace{5pt}   F^*F g
= \sum_{k=1}^p \langle F_k,g \rangle F_k  .\]
Thus $F^*F$ is of finite rank and hence compact.  As it is also symmetric, it has an
eigendecomposition
\[    F^*F  = \sum_{k=1}^{\infty}  \lambda_k^2 v_k \otimes v_k       \]
with at most $p$ of the $\lambda_k \ne 0$ and $v_k$ forming an orthonormal basis of
$L^2[0, 1]^d$.   We order so that the nonzero $\lambda_k$ lie in the first $p$ indices and are
increasing.  Now note that we may write
\[    g  = \sum_{k=1}^{\infty}  \langle g , v_k \rangle_{L^2} v_k  . \]
Now, setting $r_k  = F v_k$ we have that $\langle r_m , r_n \rangle_{L^2} = \langle v_m  ,
F^*F v_n \rangle_{L^2} =  \lambda_n^2 \langle v_m , v_n \rangle_{L^2} = \lambda_n^2 \delta_{mn}$.
Hence, the $r_k$ are orthogonal and at most $p$ of them are nonzero.  Setting $\sigma_k
= 1/\lambda_k$ for $\lambda_k > 0$ and $\sigma_k = 0$ otherwise, we set $u_k =  \sigma_k
r_k $ and note that since $\sigma_k = 0$ for the nonzero $r_k$, we  may write
\[    F g  =  \sum_{k=1}^{\infty}   \langle g , v_k \rangle_{L^2}  F v_k  = \sum_{k=1}^{p}
\sigma_k  \langle g , v_k \rangle_{L^2}  u_k  = \left(   \sum_{k=1}^p  \sigma_k u_k \otimes
v_k   \right) g.     \]
Hence under the appropriate inner products $F$ has the claimed representation.
\end{proof}

\begin{proof}[Proof of Theorem \ref{mainConsistencyTheorem}] 
Given the definitions in the paper and at the outset of the supplement, we see that we may write $Y^o_{\cdot k} = Y^{\T} u_k$ and $\hat Y^*_{\cdot k} = Y^{\T} \hat u_k$ (both in $\mathbb{R}^n$).  Direction invariance combined with the assumptions of the theorem guarantee there is a linear smoother $L$, $||L|| \le C$ such that $\tilde{ \hat f}_k  = L \hat Y^*_{\cdot k}$.  Consequently $\tilde{ \hat f} = \hat U^{\T} Y L^{\T}$ and $\tilde{ \hat F} = \hat U \hat U^{\T} Y L^{\T}$ so that, using $\tilde F = U \tilde f$, we may decompose $\tilde F - \tilde{ \hat F}$ as 
\[  \tilde F - \tilde{ \hat F}  = \underbrace{U \tilde f - UU^\T  YL^\T }_{\text{I}} + \underbrace{(UU^\T  -\tU \tU^\T  ) Y
L^\T }_{\text{II}}  \]
As $U^\T U = I_q$, one has
\begin{eqnarray}
\| U \tilde f - UU^\T  YL^\T  \|_F^2  &=& \Tr \{   ( U \tilde f - UU^\T  YL^\T   )^\T ( U \tilde f - UU^\T  YL^\T ) \}  \nonumber
\\
&=& \Tr \{ ( \tilde f - U^\T YL^\T )^\T ( \tilde f - U^\T YL^\T ) \}  = \| \tilde f - U^\T YL^\T  \|_F^2 .  \nonumber
\end{eqnarray}
Furthermore, the rows of $U^\T YL^\T $ are $\tilde{ \hat{ f ^o }}_k  = LY^o_{\cdot k}$. 
Thus, given the assumptions in Theorem \ref{mainConsistencyTheorem}, we have
\begin{eqnarray*}
 \frac{1}{n} \E (\| \text{I} \|_F^2) &=&  \frac{1}{n} \E  ( \| \tilde f - U^\T YL^\T  \|_F^2 )=
\sum_{k=1}^q \E (\| f_k -\hat{ f }^o _k\|_n^2)\\
 &&\le q \max_k \E (\| f_k -\hat{ f }^o_k \|_n^2) \le C q n^{-r} .    
\end{eqnarray*}
On the other hand, as $\| L \| \le C$, one has
\begin{eqnarray*}  \| \text{II} \|_F^2  &=&  \| (UU^\T  - \tU \tU^\T ) YL^\T  \|_F^2 \le C \| UU^\T  - \tU \tU^\T 
\|_F^2 \| Y \|^2 \\
&\le&    q  C \| UU^\T  - \tU \tU^\T  \|^2 \| Y \|^2. 
\end{eqnarray*}
Then taking expectations and using that $ \| UU^\T  - \tU \tU^\T  \| \le 2 \| \sin \{
\Theta(\tU,U) \} \|$ together with an application of Cauchy-Schwarz and lemma
\ref{SingularNormMoment}  gives that
\[   \E  (\| \text{II} \|_F^2)  \le C q \left\{  \left( \frac{p}{n} \right)^2 \wedge 1
\right\}^{1/2} \left\{  \max(p^2 , n^2)  \right\}^{1/2}  \le C q p  \max \left( 1 ,
\frac{p}{n} \right).   \]
As $p=o(n)$,  applying the triangle inequality to the decomposition of $\| \tilde F - \tilde{ \hat{ F } } \|
_F^2$, taking expectations and combining what has been shown together with the fact that $R_n(\hat F) = E \| \tilde F - \tilde{ \hat{ F } } \|
_F^2 / n$ concludes the proof
of the theorem.
\end{proof}

To prove Theorem \ref{mainConsistencyfirststep}, we need to first bound $\E \| \sin \{ \Theta ( \tU , U ) \} \|^4$.  Then one can apply Theorem \ref{mainConsistencyTheorem} to reach the final conclusion of Theorem \ref{mainConsistencyfirststep}. 
\begin{proof} [Proof of Theorem \ref{mainConsistencyfirststep}]
As noted above it is enough to show that with $\tilde F = U \tilde f$ and $Y = \tilde F + Z$, 
the left singular vectors of the singular value decomposition of $Y$, $Y = \tU \hat \Sigma \hat V ^\T $, satisfy the bound
\[   \E [  \| \sin \{ \Theta ( \tU , U ) \} \|^4 ] \le C  \left(  \frac{p}{n} \right)^2
\wedge 1.  \]
As the $\tilde f_k$'s are not
necessarily orthogonal in $\mathbb{R}^n$, we need to consider that the singular value decomposition of $\tilde F$ is of the form $\tilde F = P
\Gamma Q^\T $, with $P$ possibly spanning a different subspace from $U$.  Let $\mathcal{A}$ be the event given by
\[  \mathcal{A} = \Big\{ \Sp(P) = \Sp(U) \Big\}.  \]
Then Lemma \ref{SampledSpan} shows that 
$\P(\mathcal{A}) \ge 1- B / n^2$.
Then given that $\| \sin \{ \Theta (P,U) \} \| \le 1$ for any pair of subspaces and on the event $\mathcal{A}$, $\sin \{ \Theta(P, U) \} = 0$, it holds that
\begin{eqnarray*}   \E [ \| \sin \{ \Theta ( P , U ) \} \|^4 ] &=& \E [ \| \sin \{ \Theta ( P , U ) \} \|^4 \ind_\mathcal{A} ] +  \E [
\| \sin \{ \Theta ( P , U ) \} \|^4 \ind_{\mathcal{A}^c} ] \\
&\le& \P (\mathcal{A}^c) \le B/n^2 .   
\end{eqnarray*}
Thus we may conclude that
\[   \E [ \| \sin \{ \Theta ( P , U ) \} \|^4 ]\le  C \left(  \frac{p}{n} \right)^2 \wedge 1.
\]
By the triangle inequality, 
\begin{eqnarray*}   
\| \sin \{ \Theta ( \tU , U ) \} \|^4 &\le& \left[  \| \sin \{ \Theta ( P , U ) \} \| +  \|
\sin \{ \Theta ( P, \tU ) \} \| \right]^4 \\
&\le& C \left[  \| \sin \{ \Theta ( P , U ) \} \|^4 +
\| \sin \{ \Theta ( P,\tU ) \} \|^4 \right]. 
\end{eqnarray*}
To complete the proof, it suffices to show that $\E [ \| \sin \{ \Theta ( P,\tU ) \} \|^4]$ satisfies the
bound of the theorem.  
Now we consider the event $\mathcal{E}$ that the singular values of $\tilde F$ scale like $n^{1/2}$,
\[ \mathcal{E} = \Big\{ n^{1/2} c \le  \Gamma_{qq} \le n^{1/2} C \Big\},  \]
where $c, C > 0$ satisfy $c \le \min_k \|f_k\|_{L^2} \le \max_k \| f_k \|_{L^2} \le C$ and $\Gamma_{qq} $ is the $q$-th element of $\Gamma$.  Then Lemma \ref{QuantitativeSubspaceAngles} implies that $\P(\mathcal{E}) \ge 1 - D / n^2$.  Denote $ \E \left(  \cdot | \mathcal{D} \right)$ the expectation conditioned on design. As $\| \sin \{ \Theta (P, \tU ) \} \| \le 1$, we may decompose $ \E  \| \sin \{ \Theta (P, \tU  ) \}\|^4 $ as
\begin{eqnarray*}   \E  [\| \sin \{ \Theta (P, \tU  ) \}\|^4  ]  &=&  \E  \left( \E \left[ \left. \| \sin \{ \Theta (P,
\tU  ) \} \|^4 \right| \mathcal{D} \right] \right) \\
&\le& \P(\mathcal{E}^c) +\E  \left( \E \left[ \left. \| \sin \{ \Theta (P,
\tU  ) \} \|^4 \right| \mathcal{D} \right]  \ind_\mathcal{E} \right) .  
\end{eqnarray*}
We extend the proof of theorem 3 in \cite{cai2016rate} to bounds for fourth moment of $\| \sin \{ \Theta (P, \tU  ) \}\|$ such that
\[  \E \left[ \left. \| \sin \{ \Theta (P, \tU  ) \}\|^4 \right| \mathcal{D} \right]  \le
C \left[  \frac{ p \{ \sigma_q^2( \tilde F ) + n \}  }{ \sigma_q^4 ( \tilde F )  } \right]^2 \wedge 1.
\]
On the event $\mathcal{E}$.  By construction, on the event $\mathcal{E}$ we also have that $ n^{1/2} c \le \Gamma_{qq} = \sigma_q( F) \le n^{1/2} C$ and so

\[    \frac{ p \{ \sigma_q^2(  \tilde F ) + n \}  }{ \sigma_q^4 (  \tilde F )  }  \le \frac{ p(C + 1)n }
{c^2 n^2} \le C  \frac{p}{n}.   \]
Using the bound on $\P(\mathcal{E}^c)$ and piecing together what has been shown concludes the proof
of the theorem.
\end{proof}

Here we carefully go through the arguments of Theorem 3 in
\cite{cai2016rate} to guarantee that they hold in our case, using the notation of
that paper.  As noted in that proof, by
symmetry it is enough to extend the method of proof for the right singular vectors and
we employ the same concentration results outlined at the outset of the proof.  This
is because the left singular vectors of $Y$ are just the right singular vectors of $Y^\T$.
Thus we can apply the results for the right singular vectors of $Y$ to $Y^\T$ to get
bounds for estimation of the left singular vectors of $Y$.
Based on the proof we just need to extend the inequality to the case that $
\sigma^2_r(X) \ge C_{\text{gap}} \{ (p_1p_2)^{1/2} + p_2 \}$.
Now, under the event $\mathcal{Q}$ given there, the inequality given there implies that
\[    \| \sin \Theta(\hat{V},V) \|^4 \le C \left\{ \frac{  \sigma^2_r(X) + p_1  }{
\sigma^4_r(X)  }  \right\}^2 \| \mathbb{P}_{YV} Y V_{\perp} \|^4 ,   \]
which, in turn, gives that
\[   \E  \left\{ \| \sin \Theta(\hat{V},V) \|^4 \right\}  \le  \P(\mathcal{Q}^c) + C \left\{ \frac{  \sigma^2_r(X)
+ p_1  }{ \sigma^4_r(X)  }  \right\}^2  \E ( \| \mathbb{P}_{YU} Y U_{\perp} \|^4 \ind_{\mathcal{Q}}).
\]
In this regime (where $ \sigma^2_r(X) \ge C_{\text{gap}} \{ (p_1p_2)^{1/2} + p_2 \}$) a
bound is given for $\P(\mathcal{Q}^c)$ of
\[   \P(\mathcal{Q}^c)  \le \exp \left\{ - c \frac{  \sigma^4_r(X)}{  \sigma^2_r(X) + p_1} \right\}
.  \]
When the fraction in the exponent diverges, the exponent tends to zero faster than any
polynomial and so we have
\[   \P(\mathcal{Q}^c)  \le   C \left\{ \frac{  \sigma^2_r(X) + p_1  }{ \sigma^4_r(X)  }  \right\}^2
\le   C \left\{ \frac{ p_2(  \sigma^2_r(X) + p_1 ) }{ \sigma^4_r(X)  }  \right\}^2 . \]
Considering what happens if the exponent is not diverging, we see that this holds in
either case.  Thus for the desired extension, it remains to show that 
$\E ( \| \mathbb{P}_{YU} Y U_{\perp} \|^4 \ind_{\mathcal{Q}} )$ $\le C p_2^2$.   As in the proof, we let
$T = \| \mathbb{P}_{YU} Y U_{\perp} \|$ and note that (using the concentration bounds
employed in the theorem)
\be
\E (T^4 \ind_\mathcal{Q})  & \le &  \E \{  T^4 \ind_{ (  T^2 \le \sigma_r^2(x) + p_1  ) } \} =
\int_0^{\infty} \P \left\{ T^4 \ind_{ (  T^2 \le \sigma_r^2(x) + p_1  ) } > t \right\}
dt  \nonumber  \\
&\le & \delta p_2^2 + \int_{\delta p_2^2}^{ \{\sigma_r^2(x) + p_1\}^2 } \P \left( T >
t^{1/4} \right) dt   \nonumber \\
&\le & \delta p_2^2 + C \int_{\delta p_2^2}^{ \{\sigma_r^2(x) + p_1\}^2 }  \left(
\exp(Cp_2 - ct) + \exp[-c\{\sigma_r^2(X) + p_1\}]     \right) dt      \nonumber \\
&\le& \delta p_2^2  +   C  \{\sigma_r^2(x) + p_1\}^2   \exp[-c \{ \sigma_r^2(X) + p_1 \} ] + \nonumber\\ 
&& + C
\exp(Cp_2) \cdot \exp(-c \delta p_2) / c.  \nonumber
\ee
From this, we see that choosing $\delta$ large enough we may guarantee that $\E (T^4
\ind_\mathcal{Q})  \le C p_2^2$, which is what we wanted.  This is the final piece needed in
showing that the form of the bound we wanted holds for the fourth moment.

\section{Relevant Lemmas for Main Theorems}\label{lemmasMainTheorem}
We introduce the auxiliary lemmas for main theorems in Section \ref{Appendixprooftheorem}, the proofs of which are deferred to the Section \ref{prooflemmas} of the supplementary file.

The next two results bound fourth moments of $\| Y \|$ and $\sin \{ \Theta(\tilde{U},U) \} $. 
In the results
that follow, $Z \in \mathbb{R}^{p \times n}$ is composed of independent and identically distributed $\mathcal{N}(0,1)$ entries. 
The first main lemma is as follows
\begin{lem}
\label{SingularNormMoment}
With $Y = \tilde F + Z \in \mathbb{R}^{p \times n}$ denoting the data matrix and $\| Y \|
= \max_i \sigma_i (Y)$ the operator norm, or maximum singular value, of $Y$ we
have that
\[    \E (\| Y \|^4 ) \le C \max(p^2,n^2)   \]
holds 
when $\max_k \|f_k \|_{\infty} \le B$.
\end{lem}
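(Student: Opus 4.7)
The plan is to split $Y = \tilde F + Z$ through the triangle inequality for the operator norm and handle the deterministic and stochastic pieces separately. Specifically, I would start from $\|Y\| \le \|\tilde F\| + \|Z\|$, raise to the fourth power, and use the elementary convexity bound $(a+b)^4 \le 8(a^4 + b^4)$ so that it suffices to control $\|\tilde F\|^4$ and $\E(\|Z\|^4)$ separately, each by $C\max(p^2,n^2)$.

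For the signal term $\tilde F = U\tilde f$, I would exploit the fact that $U$ has orthonormal columns to reduce $\|\tilde F\| = \|\tilde f\|$, and then dominate the operator norm by the Frobenius norm: $\|\tilde f\|^2 \le \|\tilde f\|_F^2 = \sum_{k=1}^q \sum_{i=1}^n f_k(x_i)^2 \le qnB^2$ using the sup-norm assumption $\max_k\|f_k\|_\infty \le B$. Hence $\|\tilde F\|^4 \le q^2 n^2 B^4 \lesssim n^2$ (treating $q$ as a fixed constant as elsewhere in the paper), which is absorbed into $C\max(p^2,n^2)$.

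For the noise term, $Z$ is a $p\times n$ matrix of i.i.d.\ standard Gaussians, so I would invoke the standard Davidson--Szarek / Gordon bound $\E\|Z\| \le \sqrt{p} + \sqrt{n}$ together with the Gaussian concentration tail $\P(\|Z\| \ge \sqrt{p}+\sqrt{n}+t) \le 2\exp(-t^2/2)$. Integrating the tail,
\begin{equation*}
\E(\|Z\|^4) = \int_0^{\infty} \P(\|Z\|^4 > s)\,ds \le C\bigl[(\sqrt p+\sqrt n)^4 + \text{tail contribution}\bigr] \le C(p^2 + n^2),
\end{equation*}
and the tail contribution is exponentially small by Gaussian concentration. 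Combining the two bounds yields $\E(\|Y\|^4) \le C\max(p^2,n^2)$, as desired.

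The only real technical ingredient is the Gaussian operator-norm concentration; once that is cited, everything else is an application of $\|\cdot\| \le \|\cdot\|_F$ and the sup-norm hypothesis, so I do not anticipate any serious obstacle. If one did not want to treat $q$ as a bounded constant, the same argument goes through with an extra $q^2$ factor in the signal term, which is harmless as long as $q^2 n \lesssim \max(p^2,n^2)$.
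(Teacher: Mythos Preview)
Your proposal is correct and follows essentially the same route as the paper: triangle inequality on $\|Y\|$, Frobenius-norm control of $\|\tilde F\|$ via the sup-norm hypothesis (the paper computes $\|\tilde F\|_F^2$ by trace rather than first reducing to $\|\tilde f\|$, but this is cosmetic), and the standard Gaussian operator-norm concentration for $\|Z\|$ combined with a tail-integration to bound the fourth moment (the paper packages this last step as the auxiliary Lemma~\ref{MomentBd}).
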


Next, we show the lemma needed in the proof of Lemma \ref{SingularNormMoment}.  
\begin{lem}
\label{MomentBd}
If $X \ge 0$ is a positive random variable and for $a,b > 0$ we have $
\P( X > a + bt) \le 2 \exp(- t^2)$ for all $t \ge 0$ then it follows that $\E(X^4) \le C \max(a^4, b^4)$.
\end{lem}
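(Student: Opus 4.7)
The plan is to control $\E(X^4)$ by the standard layer-cake (tail integration) identity $\E(X^4) = \int_0^\infty 4u^3 \P(X > u)\,du$ and then split this integral into a deterministic head and a Gaussian-tail tail. The tail hypothesis $\P(X > a + bt) \le 2\exp(-t^2)$ only becomes informative once $u$ exceeds $a$, which motivates the split at $u = a$.

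First I would bound the head $\int_0^a 4u^3 \P(X > u)\,du \le \int_0^a 4u^3\,du = a^4$. For the tail, I would substitute $u = a + bt$ with $t \ge 0$ and $du = b\,dt$, yielding
\begin{equation*}
\int_a^\infty 4u^3 \P(X > u)\,du \;\le\; 8b \int_0^\infty (a + bt)^3 \exp(-t^2)\,dt.
\end{equation*}
Expanding the cube (or using $(a+bt)^3 \le 4(a^3 + b^3 t^3)$ by convexity), the integral separates into two standard Gaussian-type moments, $\int_0^\infty \exp(-t^2)\,dt$ and $\int_0^\infty t^3 \exp(-t^2)\,dt$, both of which are finite absolute constants. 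This produces a bound of the form $C(a^3 b + b^4)$.

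Finally, I would apply Young's inequality $a^3 b \le \tfrac{3}{4} a^4 + \tfrac{1}{4} b^4$ to absorb the cross term, obtaining $\E(X^4) \le a^4 + C(a^4 + b^4) \le C \max(a^4, b^4)$ with an enlarged absolute constant. I do not anticipate any real obstacle here; the only mild care needed is to keep track of the substitution Jacobian $b$ in the tail integral and to verify that the Gaussian moments being invoked are indeed absolute constants independent of $a$ and $b$.
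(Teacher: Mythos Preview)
Your proposal is correct and follows essentially the same route as the paper: both use the layer-cake identity $\E(X^4)=\int_0^\infty 4u^3\,\P(X>u)\,du$, split at $u=a$, and then substitute $u=a+bt$ in the tail to exploit the Gaussian bound. The only cosmetic difference is in handling $(a+bt)^3$: the paper bounds it by $\max(a,b)^3(1+t)^3$ and integrates $(1+t)^3 e^{-t^2}$, whereas you use convexity plus Young's inequality to control the cross term $a^3 b$; both arrive at $C\max(a^4,b^4)$.
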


The next lemma quantifies the discrepancy between $\langle \cdot , \cdot \rangle_n$
and $\langle \cdot, \cdot \rangle_{L^2}$.
These are crucial to the proofs of Lemmas \ref{SampledSpan} and \ref{QuantitativeSubspaceAngles} which, in turn are crucial to the proofs of the main theorems of the paper. 
\begin{lem}
\label{rndDes}
Suppose that $x_i \sim \mathcal{U} [0,1]^d$ are independently drawn from the uniform distribution
on the unit cube in $\Rd$ and $f_1,\dots,f_q$ are bounded and orthogonal in $L^2 [0,1]^d
$, satisfying $\max_{i \le q} \| f_i \|_{\infty} \le B$.   Then,
\[    \P(  \max_{i,j} | \langle  f_i , f_j \rangle_n  - \langle  f_i , f_j \rangle |  >
2 \delta B^2  )   \le  q(q + 1)  \exp \left(   -  \frac{ n \delta^2   }{    2    }
\right)    \]
and so, with probability $\ge 1 - q(q + 1)/n^2$,
\[       \max_{i,j}  | \langle  f_i , f_j \rangle_n  - \langle  f_i , f_j \rangle |  \le
4 B^2  \left( \frac{\log n}{n} \right)^{1/2}  .    \]

\end{lem}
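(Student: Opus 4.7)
The plan is to recognize that each $\langle f_i, f_j\rangle_n$ is simply a sample mean of i.i.d.\ bounded random variables, apply a standard concentration inequality (Hoeffding), and then union bound over the finitely many index pairs. For fixed $i, j$, write
\[ \langle f_i, f_j \rangle_n - \langle f_i, f_j \rangle_{L^2} \;=\; \frac{1}{n}\sum_{k=1}^n W_k - \E (W_1), \qquad W_k := f_i(x_k)\,f_j(x_k), \]
where the $W_k$ are i.i.d.\ since the $x_k$ are, and satisfy $|W_k|\le B^2$ by the sup-norm bound on the $f_i$'s. Moreover $\E(W_1) = \int_{[0,1]^d} f_i f_j = \langle f_i, f_j \rangle_{L^2}$ under uniform sampling of $x_k$, which is exactly the quantity we are centering around.

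Since each $W_k$ lies in $[-B^2, B^2]$, Hoeffding's inequality gives, for any $t>0$,
\[ \P\bigl(|\langle f_i, f_j\rangle_n - \langle f_i, f_j\rangle_{L^2}| > t\bigr) \;\le\; 2\exp\!\left(-\frac{nt^2}{2B^4}\right). \]
Setting $t = 2\delta B^2$ yields a tail bound of the form $2\exp(-c\,n\delta^2)$ for a universal $c>0$; with the exponent as stated in the lemma one may use the slightly looser form $\exp(-n\delta^2/2)$. By symmetry $\langle f_i, f_j\rangle_n = \langle f_j, f_i\rangle_n$, so there are only $q(q+1)/2$ distinct pairs to control; a straightforward union bound yields the first displayed inequality of the lemma, absorbing the prefactor of $2$ into replacing $q(q+1)/2$ by $q(q+1)$.

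For the second conclusion I would simply calibrate $\delta$: taking $\delta = 2(\log n/n)^{1/2}$ makes the exponent equal $-2\log n$, so the probability bound becomes $q(q+1)/n^2$, while the deviation $2\delta B^2 = 4B^2(\log n/n)^{1/2}$ matches the claimed high-probability bound. There is no real obstacle in this proof; the only care needed is in (i) verifying that $\E W_1$ indeed equals the $L^2$ inner product under the uniform design, and (ii) keeping track of the constant in the Hoeffding exponent, since the stated constant in the lemma is slightly looser than the sharpest Hoeffding bound and one should ensure consistency.
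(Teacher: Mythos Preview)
Your proposal is correct and follows essentially the same approach as the paper: express $\langle f_i,f_j\rangle_n - \langle f_i,f_j\rangle_{L^2}$ as an average of i.i.d.\ bounded variables, apply Hoeffding, and union bound over the $q(q+1)/2$ symmetric pairs (absorbing the factor $2$ into $q(q+1)$). The only cosmetic difference is that the paper centers the summands before bounding their range (yielding the slightly looser exponent $-n\delta^2/2$ directly), whereas you apply Hoeffding to the uncentered $W_k\in[-B^2,B^2]$ and then relax to the stated constant; both are valid.
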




Let $\tilde{F} = U \tilde f \in \mathbb{R}^{p \times n}$, as defined above, represent the sampled version of the singular value decomposition representation of the target
\[  F = \sum_{k=1}^q u_k \otimes f_k = \sum_{k=1}^q \sigma_k u_k \otimes v_k ,   \]
where we have set $\sigma_k = \|f_k\|_{L^2}$ and $v_k = f_k / \|f_k\|_{L^2}$.   Further, we let $\tilde{F} = P \Gamma Q^\T $ denote the singular value decomposition of $\tilde{F}$.   Recall that the matrix $\tilde f \in \mathbb{R}^{q \times n}$ collects the sampled values of the $f_k$ in its rows.  

Due to sampling, it is not clear whether $\tilde F$ is a close approximation to $F$ in either of the following senses:

1, The matrix $\tilde F$ provides a close approximation to the singular vectors we wish to estimate in that the span's are the same, i.e. $\Sp( \tilde F) = \Sp(P) = \Sp(U)$.  This in turn guarantees that $\sin \Theta(P, U) = 0$.

2, The matrix $\tilde F$ is ``large'' enough to separate signal from noise in estimating the $U$. 

The following lemmas resolve these issues and are central to the proof of Theorem \ref{mainConsistencyfirststep}.

\begin{lem}
\label{SampledSpan}
We eventually have  $\Sp(\tilde F) = \Sp(U)$ with probability greater than or equal to $1- B/n^2$ for some fixed $B \ge 0$.  As $\Sp(\tilde F)  = \Sp(P)$, with $P$ from the singular value decomposition of $\tilde F = P \Gamma Q^\T $, this guarantees $\Sp(P) = \Sp(U)$ and hence $\sin \Theta(P, U) = 0$.  
\end{lem}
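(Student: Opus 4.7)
The plan is to reduce $\Sp(\tilde F) = \Sp(U)$ to a rank condition on the $q \times n$ matrix $\tilde f$ of sampled function values, and then to establish that rank condition via concentration of an empirical Gram matrix. Since $\tilde F = U \tilde f$ with $U$ having orthonormal columns, one always has $\Sp(\tilde F) \subseteq \Sp(U)$, with equality if and only if $\tilde f$ has rank $q$; equivalently, the Gram matrix $G_n := n^{-1} \tilde f \tilde f^{\T} \in \mathbb{R}^{q \times q}$ is invertible. Its entries are $(G_n)_{ij} = \langle f_i, f_j \rangle_n$, so by the $L^2$-orthogonality of the $f_k$ built into model \eqref{nonparametricFactorizationModel}, the limiting matrix $G$ is diagonal with entries $\|f_k\|_{L^2}^2$; we may assume $\|f_k\|_{L^2} > 0$ for all $k \le q$ (otherwise the representation would use fewer than $q$ components and we simply relabel).

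Next I would apply Lemma \ref{rndDes} to the bounded orthogonal family $\{f_1,\ldots,f_q\}$: with probability at least $1 - q(q+1)/n^2$ one obtains the entrywise bound $\|G_n - G\|_{\max} \le 4 B^2 (\log n / n)^{1/2}$. A Gershgorin-disk argument applied to $G_n = G + (G_n - G)$ places every eigenvalue of $G_n$ within $q \cdot \|G_n - G\|_{\max}$ of some diagonal entry of $G$, so once $n$ is large enough that $4 q B^2 (\log n / n)^{1/2} < \tfrac{1}{2} \min_k \|f_k\|_{L^2}^2$, the matrix $G_n$ is positive definite, hence invertible. On this event $\tilde f$ has full row rank, $\Sp(\tilde F) = \Sp(U)$, and therefore the left singular vectors $P$ arising from $\tilde F = P \Gamma Q^{\T}$ satisfy $\Sp(P) = \Sp(U)$, yielding $\sin \Theta(P,U) = 0$.

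The word ``eventually'' in the statement absorbs the threshold $n_0 = n_0(q, B, \min_k \|f_k\|_{L^2})$ implicit in the Gershgorin step: for $n \ge n_0$ the failure probability is at most $q(q+1)/n^2 \le B/n^2$ with $B := q(q+1)$, and for $n < n_0$ one can inflate $B$ so that $B/n^2 \ge 1$ renders the bound vacuous. I do not anticipate a genuine obstacle here; the only care needed is in tracking the constants and the threshold $n_0$, and the conceptual content is just the reduction to invertibility of $G_n$ plus the concentration guarantee of Lemma \ref{rndDes}, which identifies the empirical Gram matrix with its diagonal positive-definite limit up to a vanishing error.
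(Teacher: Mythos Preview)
Your proposal is correct and matches the paper's own proof essentially step for step: both reduce $\Sp(\tilde F)=\Sp(U)$ to invertibility of the Gram matrix $n^{-1}\tilde f\tilde f^{\T}$, invoke Lemma~\ref{rndDes} to control its entries, and then use a Gershgorin argument to force the eigenvalues positive for large $n$. The paper's version applies Gershgorin directly to $G_n$ and bounds $d/n\ge c-qB(\log n/n)^{1/2}$, which is exactly your bound phrased slightly differently.
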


\begin{lem}
\label{QuantitativeSubspaceAngles}
Let $\tilde{F}$ represent the sampled version of the singular value decomposition representation of the target
\[  F = \sum_{k=1}^q u_k \otimes f_k = \sum_{k=1}^q \sigma_k u_k \otimes v_k  ,   \]
where $ \otimes $ is the Kronecker product.  Under 
the conditions of Theorem \ref{mainConsistencyfirststep}, where we have set $\sigma_k = \|f_k\|_{L^2}$ and $v_k = f_k / \|f_k\|_{L^2}$, if $\gamma$ is one of the top $q$ singular value of $\tilde{F}
$, $\gamma$ satisfies
\[  \min_{i \le q} | \gamma - n^{1/2} \sigma_i | \le Cq^{1/2} \left( n \log n \right)^{1/4}  \]
with probability at least $1-B/n^2$ for some $B \ge 0$. In particular, it follows that if $c \le
\min_k \sigma_k \le  \max_k \sigma_k \le C$ then, with possibly adjusted constants, $
\gamma$ satisfies
\[   n^{1/2}c \le \gamma \le n^{1/2}C \]
with probability at least $1-B/n^2$, for some $B \ge 0$ and large enough $n$.
\end{lem}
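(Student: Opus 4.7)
The plan is to reduce the singular values of $\tilde{F} = U\tilde{f}$ to those of the much smaller $q\times n$ matrix $\tilde{f}$, then exploit the near-orthogonality of the rows of $\tilde{f}$ provided by Lemma \ref{rndDes}. Since $U \in \mathbb{R}^{p\times q}$ has orthonormal columns, $\tilde{F}^{\T}\tilde{F} = \tilde{f}^{\T} U^{\T} U \tilde{f} = \tilde{f}^{\T}\tilde{f}$, so the nonzero singular values of $\tilde{F}$ coincide with those of $\tilde{f}$. Equivalently, writing $M \equiv \tilde{f}\tilde{f}^{\T} \in \mathbb{R}^{q\times q}$, the top $q$ squared singular values of $\tilde{F}$ are exactly the eigenvalues of $M$.

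Next I would identify the structure of $M/n$. Its $(i,j)$ entry is
\[
(M/n)_{ij} \;=\; \frac{1}{n}\sum_{\ell=1}^n f_i(x_\ell) f_j(x_\ell) \;=\; \langle f_i, f_j\rangle_n,
\]
whereas $\langle f_i, f_j\rangle_{L^2} = \sigma_i^2 \delta_{ij}$ since the $f_k$ are orthogonal in $L^2$. Lemma \ref{rndDes} then gives, on an event of probability at least $1 - q(q+1)/n^2$, that every entry of $M/n - D$ is at most $4B^2(\log n/n)^{1/2}$ in absolute value, where $D = \mathrm{diag}(\sigma_1^2,\ldots,\sigma_q^2)$. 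Since $M/n - D$ is $q\times q$, its spectral norm is bounded by $q$ times its max entry, yielding $\|M/n - D\| \le C q (\log n / n)^{1/2}$. Weyl's inequality applied to the symmetric matrices $M/n$ and $D$ then gives
\[
\min_{i\le q}\bigl|\lambda_j(M/n) - \sigma_i^2\bigr| \;\le\; Cq(\log n / n)^{1/2}\qquad (1\le j\le q).
\]

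Now, if $\gamma$ is a top singular value of $\tilde{F}$, then $\gamma^2$ is an eigenvalue of $M$, so $\gamma^2/n$ is an eigenvalue of $M/n$ and hence $|\gamma^2 - n\sigma_i^2| \le Cq(n\log n)^{1/2}$ for some $i\le q$. The step I expect to be the main subtlety is converting this into a bound on $|\gamma - n^{1/2}\sigma_i|$ without assuming a positive lower bound on $\sigma_i$. The trick is the elementary inequality $|a-b|^2 \le |a^2 - b^2|$ for nonnegative reals $a,b$ (since $|a-b|\le a+b$). Applying it with $a=\gamma \ge 0$ and $b = n^{1/2}\sigma_i \ge 0$ gives
\[
|\gamma - n^{1/2}\sigma_i|^2 \;\le\; |\gamma^2 - n\sigma_i^2| \;\le\; Cq(n\log n)^{1/2},
\]
and taking square roots produces the advertised $Cq^{1/2}(n\log n)^{1/4}$ rate.

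Finally, under the additional assumption $c \le \min_k \sigma_k \le \max_k \sigma_k \le C$, the bound $\min_i|\gamma - n^{1/2}\sigma_i| \le Cq^{1/2}(n\log n)^{1/4}$ combined with the fact that $n^{1/2}\sigma_i \in [n^{1/2}c, n^{1/2}C]$ and that the error term $q^{1/2}(n\log n)^{1/4} = o(n^{1/2})$ for fixed $q$ immediately forces $\gamma \in [n^{1/2}c', n^{1/2}C']$ for adjusted constants $c',C'$ and all sufficiently large $n$, on the same high-probability event. The only probabilistic input is Lemma \ref{rndDes}, so the whole argument inherits its failure probability of order $1/n^2$.
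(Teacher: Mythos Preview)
Your argument is correct and follows essentially the same route as the paper: both compare the eigenvalues of $\tilde f\tilde f^{\T}/n$ to the diagonal matrix $\mathrm{diag}(\sigma_1^2,\dots,\sigma_q^2)$ via Lemma~\ref{rndDes}, bound the spectral norm of the perturbation by $q$ times the max entry, and invoke Weyl's inequality (Lemma~\ref{WeylThm}) to control $|\gamma^2-n\sigma_i^2|$. Your version is slightly tidier in two respects: you work directly with the $q\times q$ matrix $\tilde f\tilde f^{\T}$ rather than embedding it as $A=U\tilde f\tilde f^{\T}U^{\T}\in\mathbb{R}^{p\times p}$, and you make explicit the passage from squares to roots via $|a-b|^2\le|a^2-b^2|$, which the paper leaves implicit.
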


For the proof of Lemma \ref{QuantitativeSubspaceAngles}, we need a well known perturbation result for matrices \citep{weyl1912asymptotische}, which will ease the proof of this result considerably.

\begin{lem}
\label{WeylThm}
(Weyl) Let the eigenvalues of real symmetric matrices $A$ and $A + E$ be $\lambda_1 \ge \lambda_2 \ge \dots \ge \lambda_n \ge 0$ and $\tilde \lambda_1 \ge \tilde \lambda_2 \ge \dots \ge \tilde \lambda_n \ge 0$ respectively. Then $\max_i |\lambda_i - \tilde \lambda_i| \le \| E \|_2$.  
\end{lem}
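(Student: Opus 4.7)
The plan is to derive this from the Courant--Fischer min-max characterization of eigenvalues of real symmetric matrices, which states that for a symmetric $M$ with ordered eigenvalues $\mu_1 \ge \cdots \ge \mu_n$,
\[ \mu_i(M) = \max_{S:\, \dim S = i} \; \min_{x \in S,\, \|x\| = 1} x^\T M x. \]
Because $A$ and $A + E$ are both symmetric, this characterization applies to each of them.

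First I would record the elementary but key observation that for any unit vector $x$ and any symmetric matrix $E$, one has $|x^\T E x| \le \|E\|_2$, since the spectral norm of a symmetric matrix equals the supremum of $|x^\T E x|$ over unit vectors $x$. Combined with the identity $x^\T(A+E)x = x^\T A x + x^\T E x$, this yields the pointwise sandwich
\[ x^\T A x - \|E\|_2 \;\le\; x^\T(A+E)x \;\le\; x^\T A x + \|E\|_2, \]
valid for \emph{every} unit vector $x$. Since the bound is uniform in $x$, taking the minimum of each side over unit vectors in a fixed $i$-dimensional subspace $S$ preserves the inequalities, and then taking the maximum over all $i$-dimensional subspaces $S$ and applying Courant--Fischer to both $A$ and $A+E$ gives
\[ \lambda_i - \|E\|_2 \;\le\; \tilde\lambda_i \;\le\; \lambda_i + \|E\|_2, \]
which rearranges to $|\lambda_i - \tilde\lambda_i| \le \|E\|_2$. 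Since the index $i$ was arbitrary, the maximum over $i$ obeys the same bound.

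There is no real obstacle here: the whole argument reduces to controlling the perturbation term $x^\T E x$ uniformly by $\|E\|_2$ and then transporting that control through the min-max formula. The only point requiring a bit of care is that the inequality on unit vectors must be uniform before one takes $\min$ and $\max$, which is precisely what the spectral-norm bound provides.
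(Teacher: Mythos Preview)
Your argument via the Courant--Fischer min--max characterization is correct and is the standard textbook proof of Weyl's inequality. Note that the paper does not supply its own proof of this lemma: it simply states the result as well known and cites \cite{weyl1912asymptotische}, so there is no in-paper argument to compare against.
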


\section{Details on Examples and Attained Rates}\label{lemmasMainExamples}
We introduce the lemmas and theorems for examples in the Section \ref{Theoretical guarantees}, the proofs of which are deferred to the Section \ref{prooflemmas} of the supplementary file.

\subsection{Local Polynomial Regression}\label{local}
For this section, the smoothness class of primary concern is the \emph{H\"older class},  $\Sigma(\beta, L)$.  For any real number $x$, let $\lfloor x \rfloor$ represent the largest integer strictly less than $x$.  Then $\Sigma(\beta, L)$ consists of all functions $f$ which are $l = \lfloor \beta \rfloor$ times differentiable and whose $l$th derivative $f^{(l)}$ satisfies
\[  |f^{(l)}(x) - f^{(l)}(y)| \le L | x - y |^{\beta - l},  \]
for all $x, y$ in the domain of interest.  

It is well known that in the fixed design case, where we roughly have $x_i = i/n$, if the kernel and bandwidth are properly chosen, then local polynomial smoothing gives an estimator $\hat{f}$ of $f$ from the data
\[ y_i = f(x_i) + z_i, \]
which satisfies
\[  \E_f \| \hat{f}_n - f \|_n^2  \le C n^{- 2\beta / (1 + 2\beta)}.  \]
Furthermore, this rate is minimax optimal.  For proof and in depth setup, see proposition 1.13 and theorem 1.6 in \cite{Tsybakov:2008:INE:1522486}.   In the random design case, there don't seem to be any results on convergence in the metric we want, namely $\E_f \| \cdot \|_n^2$.  

One remedy to this is to adopt a similar approach to that in \cite{Cai1999RUD}, and slightly modify the local polynomial regression strategy.  To this end, let $0 \le x_{(1)} \le \dots \le x_{(n)} \le 1$ represent the order statistics of the uniform design and relabel the $y_i$'s and $z_i's$ according to these so that $y_i = f(x_{(i)} ) + z_i $ generate the observations.  In the recovery procedure, we pretend that $x_{(i)}$ is $\delta_i  =  \E x_{(i)} = i / (n + 1)$ so that we perform local polynomial regression as if the observations were $ \left( \delta_i ,  y_i \right) $ for $ i=1,\dots,n $. That is, with $K$ being a kernel satisfying the right conditions and
\[  U(u) = \left(  1, u, \frac{u^2}{2} , \dots, \frac{u^l}{l!}\right)^{\T},  \]
we form the estimate
\[  \hat{\theta}(x) = \arg \min_{\theta}  \sum_{i = 1}^n \left\{ y_i   -  \theta^{\T} U \left( \frac{\delta_i - x}{h} \right)  \right\}^2 K \left( \frac{\delta_i - x}{h} \right)   \]
and we estimate $f$ by
\begin{equation}
\label{LPSestimate} 
\hat{f}(x) =  U(0)^{\T} \hat{\theta}(x). 
\end{equation}
Then for a given $x$, $\hat{f}(x)$ is linear in the $y_i$ in that one may show
\[ \hat{f}(x)  = \sum_{i=1}^n W_{n, i}(x) y_i ,  \]
as for a standard local polynomial estimator.  Further, the $W_{n, i}(x)$ are now completely deterministic, satisfying all of the properties derived in \cite{Tsybakov:2008:INE:1522486}.  
We can then show that this estimator achieves the rate we want in the metric we need it to, as the following theorem guarantees. 


\begin{thm}\label{LPStheorem} (Local Polynomial Smoothing)
Suppose that $f$ belongs to the H\"older class $\Sigma(\beta, L)$, the design is uniform random and the kernel $K$ satisfies the properties outlined in \cite{Tsybakov:2008:INE:1522486}.  Then we may be assured that $\hat{f}$ outlined above satisfies 
\[  \E_f \| \hat{f} - f \|_n^2  \le C n^{- 2\beta / (1 + 2\beta)}.  \]
\end{thm}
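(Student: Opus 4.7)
The plan is to reduce the problem to the fixed-design local polynomial analysis of \cite{Tsybakov:2008:INE:1522486} by exploiting that the weights $W_{n, i}(\cdot)$ depend only on the deterministic points $\delta_i = i/(n+1)$, not on the actual order statistics $x_{(i)}$. Conditioning on the design and decomposing $\hat{f}(x_{(j)}) - f(x_{(j)})$ into a noise part, a standard polynomial bias part, and a ``design mismatch'' part should yield the desired rate $n^{-2\beta/(1+2\beta)}$.

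First I would condition on the order statistics and use $y_i = f(x_{(i)}) + z_i$ to write
\[ \hat{f}(x_{(j)}) - f(x_{(j)}) = V_j + B_j + A_j, \]
where $V_j = \sum_{i} W_{n, i}(x_{(j)}) z_i$ is the stochastic term, $B_j = \sum_{i} W_{n, i}(x_{(j)}) f(\delta_i) - f(x_{(j)})$ is the bias that would arise if the design were exactly $\delta_i$, and $A_j = \sum_{i} W_{n, i}(x_{(j)}) \{ f(x_{(i)}) - f(\delta_i) \}$ captures the mismatch between the random design and its expected positions. Squaring, averaging over $j$, and taking expectation gives three pieces which I would treat separately.

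For the variance piece, the uniform deterministic bound $\sum_{i} W_{n, i}(x)^{2} \le C/(nh)$ from the local polynomial theory of \cite{Tsybakov:2008:INE:1522486} yields $n^{-1} \sum_{j} \E V_j^{2} = O(1/(nh))$. For the bias piece, a Taylor expansion of $f$ around $x_{(j)}$, together with polynomial reproducibility of the weights in the argument $\delta_i$ and the H\"older remainder estimate of order $\beta$, combined with the compact support of $K$ (so only indices with $|\delta_i - x_{(j)}| \le h$ contribute), gives $|B_j| \le C h^{\beta}$ and hence $n^{-1}\sum_{j} B_j^{2} \le C h^{2\beta}$. Choosing $h \sim n^{-1/(1+2\beta)}$ balances these two contributions at the target rate $n^{-2\beta/(1+2\beta)}$.

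The main difficulty will be controlling the mismatch term $A_j$. My approach would be to apply Cauchy--Schwarz with the bounded $\ell_{1}$-norm of the weights to obtain $A_j^{2} \le C \sum_{i} |W_{n, i}(x_{(j)})|\, |f(x_{(i)}) - f(\delta_i)|^{2}$, then use the H\"older smoothness of $f$ (of order $\min(\beta, 1)$) together with the classical moment bound $\E (x_{(i)} - \delta_i)^{2} \le C/n$ for uniform order statistics. This contributes $O(n^{-\min(1, \beta)})$, which is dominated by $n^{-2\beta/(1+2\beta)}$ whenever $\beta \ge 1/2$. In the small-$\beta$ regime, restricting the sum over $i$ to the $O(nh)$ indices inside the kernel support and exploiting the sharper uniform bound $\max_i |x_{(i)} - \delta_i| = O_p((\log n / n)^{1/2})$ for uniform order statistics should recover the correct rate. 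Putting the three bounds together yields $\E \| \hat{f} - f \|_n^{2} \lesssim n^{-2\beta/(1+2\beta)}$, as claimed.
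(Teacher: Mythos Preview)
Your decomposition into a variance term $V_j$, a deterministic-design bias $B_j$, and a design-mismatch term $A_j$ is exactly the paper's approach, and your treatment of $V_j$ and $B_j$ matches theirs. The only substantive difference is how the mismatch term is bounded.

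The paper works in the regime $l=\lfloor\beta\rfloor\ge 1$ (this is the standing assumption in Example~1), so $f$ is Lipschitz. It then applies Cauchy--Schwarz using the $\ell_2$ weight bound $\sum_i W_{n,i}(x)^2\le C/(nh)$ from \cite{Tsybakov:2008:INE:1522486} rather than the $\ell_1$ bound, obtaining
\[
A_j^2 \;\le\; C\Big(\sum_i |x_{(i)}-\delta_i|^2\Big)\Big(\sum_i W_{n,i}(x_{(j)})^2\Big)\;\le\;\frac{C}{nh}\sum_i |x_{(i)}-\delta_i|^2,
\]
and then $\E A_j^2\le \frac{C}{nh}\sum_i\V(x_{(i)})\le C/(nh)$. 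This matches the variance contribution exactly, so no case analysis on $\beta$ is needed and the optimal $h$ is read off directly. Your $\ell_1$-based Cauchy--Schwarz is also legitimate but yields the weaker rate $n^{-\min(1,\beta)}$ (and you have to be a bit careful with the correlation between $W_{n,i}(x_{(j)})$ and $x_{(i)}-\delta_i$ when taking expectations; pulling out a $\sup_i$ introduces a $\log n$ factor). The paper's version is both shorter and sharper in the regime it targets.

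Your extension to $\beta<1/2$ goes beyond what the paper attempts, but as written it has a gap: the bound $\max_i|x_{(i)}-\delta_i|=O_p((\log n/n)^{1/2})$ is a statement in probability, not an expectation bound, so it does not by itself control $\E\|\hat f-f\|_n^2$. You would need a tail bound (e.g.\ via DKW) to integrate this into a moment estimate.
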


For local polynomial smoothing, we have $\|L\| \le C$.   This follows from a result for bounds of eigenvalues of matrices.  Let $A = (a_{kl})_{k,l=1}^n$ be an $n \times n$ matrix and set
\[ R_k = \sum_l | a_{kl} | \hspace{5pt} \text{ and } \hspace{5pt} C_l = \sum_k | a_{kl} | . \]
Then one can show that the eigenvalues of $A$, $\mu(A)$, are bounded by
\[  \mu(A) \le \min \left( \max_k R_k, \max_l C_l \right) \le \max_k R_k . \]
In the case of local polynomial smoothing, the $L$ satisfies $L_{ij} = W_{n, j}(x_i)$ and from \cite{Tsybakov:2008:INE:1522486} we know that
\[ R_i = \sum_j | L_{ij} | = \sum_j | W_{n, j}(x_i) | \le C .  \]
Thus we have that  $\mu(L) \le C$ and thus $\| L \| \le C$.

\subsection{Truncated Series Estimation}\label{truncated}
For extensive setup and analysis of fixed design for Fourier basis and Sobolev smoothness, see \cite{Tsybakov:2008:INE:1522486}.   Here the smoothness class of interest is the Sobolev class of periodic functions of integer smoothness $\beta$, denoted by $W^p(\beta, L)$. To define this class of functions, we start with the Sobolev class $W(\beta, L)$ defined by
\[ W(\beta, L) = \left\{ f \in L^2[0, 1] : f^{(\beta - 1)} \in \mathcal{C}[0, 1] \text{ and } \int_0^1 (f^{(\beta)})^2 \le L \right\} , \]
where $\mathcal{C}[0, 1]$ is the collection of absolutely continuous functions on $[0, 1]$.  The function class of interest, $W^p(\beta, L)$, is then defined by
\[ W^p(\beta, L)  =  \left\{ f \in  W(\beta, L): f^{(j)} (0) = f^{(j)}(1) \text{ for } j=0, 1, \dots, \beta - 1 \right\} .  \]
Fix the Fourier basis, where $\varphi_1 = 1$ and $\varphi_{2k} = 2^{1/2} \cos(2 \pi x)$, $\varphi_{2k + 1} = {2}^{1/2} \sin(2 \pi x)$ for $k \ge 1$.  It is known that every $f  \in W^p(\beta, L)$ has a Fourier expansion of the form
\[ f = \sum_{k=1}^{\infty} f_k \varphi_k \]
and that the coefficients of all $f  \in W^p(\beta, L)$ lie in an ellipsoid of the form
\[ Q(\beta, C) = \left\{ (c_k) \in \ell_2 : \sum_{k \ge 1} a_k^2 c_k^2 \le C \right\} , \]
where the $a_k \sim k^{\beta}$.

Now let $\{ \varphi_k \}_{k \ge 1}$ be an orthonormal basis of $L^2[0, 1]$, $x_i \stackrel{i.i.d.}{\sim} U[0, 1]$ represent a uniform random design on $[0, 1]$ and set
\[  \hat{c}_k  = \frac{1}{n} \sum_{i = 1}^n y_i \varphi_k(x_i)  \hspace{5pt} \text{ and } \hspace{5pt} \hat{f}_{n, K} = \sum_{k = 1}^K \hat{c}_k \varphi_k .   \]
For analysis of the estimator $\hat{f}_{n, K} $, we set 
\[ \overline{c}_k = \frac{1}{n} \sum_{i = 1}^n f(x_i) \varphi_k(x_i)      \]
and notice that we have $\E_{f, \mathbb{X}}  (\hat{c}_k) = \overline{c}_k $ and $\E_{f}  (\overline{c}_k ) =  \E_{f} ( \hat{c}_k ) = c_k .$
Setting $f_K = \sum_{k > K} c_k \varphi_k$, we may establish the following theorem
\begin{thm} (Oracle inequality for truncated series estimation)
\label{OrrTES}
The estimator $\hat{f}_{n, K}$ defined above satisfies the risk bound
\[ \E_f (  \| f - \hat{f}_{n, K}  \|_n^2 ) \le 2 \frac{ (1 + K / n ) (1 + \|f\|_{L^2}^2 )  K }{ n }  +  2 \E_f (\| f_K \|_n^2).    \]
\end{thm}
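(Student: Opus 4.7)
The plan is to reduce the risk bound to controlling $\hat f_{n,K} - f^K$, where $f^K := \sum_{k \le K} c_k \varphi_k$ is the $L^2$ projection. Applying $(a+b)^2 \le 2a^2 + 2b^2$ pointwise to the decomposition $f - \hat f_{n,K} = f_K + (f^K - \hat f_{n,K})$ at each design point yields
\[ \E_f\|f - \hat f_{n, K}\|_n^2 \le 2\,\E_f \|\hat f_{n, K} - f^K\|_n^2 + 2\, \E_f \|f_K\|_n^2, \]
so the second summand is already in the desired form, and everything reduces to establishing $\E_f \|\hat f_{n, K} - f^K\|_n^2 \le (1+K/n)(1+\|f\|_{L^2}^2) K/n$.

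For this inner bound I would expand
\[ \|\hat f_{n,K} - f^K\|_n^2 = \sum_{k, l \le K}(\hat c_k - c_k)(\hat c_l - c_l) M_{kl}, \qquad M_{kl} := \langle \varphi_k, \varphi_l\rangle_n, \]
and condition on $x = (x_1, \ldots, x_n)$. Since $\hat c_k - \bar c_k = \frac{1}{n}\sum_i z_i \varphi_k(x_i)$ is conditionally Gaussian with mean zero and $\C(\hat c_k, \hat c_l | x) = M_{kl}/n$, while $\bar c_k - c_k$ is $x$-measurable, all cross terms vanish and one obtains the clean split
\[ \E_f \|\hat f_{n,K} - f^K\|_n^2 = \frac{1}{n}\E\|M\|_F^2 + \E \|e\|_n^2, \qquad e(x) := \sum_{k \le K}(\bar c_k - c_k)\varphi_k(x). \]
The noise piece is handled by routine second-moment calculations using $|\varphi_k| \le \sqrt{2}$ for the Fourier basis: one finds $\E M_{kk}^2 \le 1 + C/n$ and $\E M_{kl}^2 \le C/n$ for $k \ne l$, giving $n^{-1}\E\|M\|_F^2 \lesssim (K/n)(1 + K/n)$, matching the noise part of the target.

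The main obstacle will be the approximation term $\E\|e\|_n^2$, since $e$ and $M$ are both entangled functions of the random design. The first reduction exploits exchangeability: because $\bar c_k$ is a symmetric function of $(x_1, \ldots, x_n)$ and the $x_j$'s are i.i.d., one has $\E\|e\|_n^2 = \E e(x_1)^2$. Introducing the projection kernel $K_n(x, y) := \sum_{k \le K}\varphi_k(x)\varphi_k(y)$ identifies $e(x_1) = \frac{1}{n}\sum_i f(x_i) K_n(x_1, x_i) - f^K(x_1)$; I would then split the $i = 1$ diagonal piece (deterministic given $x_1$ and controlled by $K_n(x_1, x_1) \le K$) from the $i > 1$ sum (a centered i.i.d.\ average conditional on $x_1$), and integrate the conditional variance of the latter via the orthonormality identity $\int K_n(x, y)^2\, dx = K_n(y, y) \le K$. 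This produces an $O(K\|f\|_{L^2}^2/n)$ main contribution together with an $O(K^2\|f\|_{L^2}^2/n^2)$ lower-order term. Combining with the noise bound matches $(1+K/n)(1+\|f\|_{L^2}^2)K/n$ up to implicit constants, and the triangle reduction then completes the proof. Without the exchangeability step and the kernel identity, the raw sum $\sum_{k,l} \E[M_{kl}(\bar c_k - c_k)(\bar c_l - c_l)]$ is hopelessly tangled, so these two observations are what make the argument tractable.
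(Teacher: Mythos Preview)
Your argument is correct and arrives at the same bound up to constants; the overall architecture (the $2a^2+2b^2$ split into $f_K$ and $f^K-\hat f_{n,K}$, followed by the conditional-on-design decomposition into the noise piece $n^{-1}\E\|M\|_F^2$ and the approximation piece $\E\|e\|_n^2$) is exactly what the paper does. The genuine divergence is in how the approximation term $\sum_{k,l}\E\!\big[M_{kl}(\bar c_k-c_k)(\bar c_l-c_l)\big]$ is evaluated. The paper attacks it head-on: it expands the product as a triple sum $n^{-3}\sum_{i,j,k}A_{ijk}$ with $A_{ijk}=\E\big[\{f(x_i)\varphi_u(x_i)-c_u\}\{f(x_j)\varphi_v(x_j)-c_v\}\varphi_u(x_k)\varphi_v(x_k)\big]$, does a case analysis on which of $i,j,k$ coincide, and bounds the mixed-index term $D_{uv}$ by Cauchy--Schwarz. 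You instead use exchangeability to collapse $\E\|e\|_n^2$ to $\E e(x_1)^2$, rewrite $e(x_1)$ through the projection kernel $K_n(x,y)=\sum_{k\le K}\varphi_k(x)\varphi_k(y)$, and then exploit the reproducing identity $\int K_n(x,y)^2\,dx=K_n(y,y)$ in a conditional variance calculation. Your route is cleaner conceptually and sidesteps the triple-index bookkeeping; the paper's direct computation, on the other hand, tracks the constants exactly and delivers the stated factor $2(1+K/n)(1+\|f\|_{L^2}^2)K/n$ rather than a generic $C$ (note also that $K_n(x,x)\le 2K$ for the Fourier basis, not $\le K$, though this only shifts constants---the paper makes an analogous slip when it asserts $|\varphi_u|\le 1$).
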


Now  assumptions on the decay of the $c_k$ will provide bounds on the error of the estimator.  For instance, assuming that $f \in W^p(\beta, L)$ implies that $(c_k) \in Q(\beta, C)$, as above.  This implies that
\bea
\E_f (\| f_K \|_n^2) &=& \| f_K \|_{L^2}^2 = \sum_{k > K} c_k^2 \nonumber  \\
&=& \sum_{k > K} a_k^{-2} a_k^2 c_k^2  \le a_K^{-2}\sum_{k > K} a_k^2 c_k^2 \le C a_K^{-2} \le C K^{-2\beta}.
\eea
Employing the Theorem \ref{OrrTES}, this gives that 
\[ \E_f  ( \| f - \hat{f}_{n, K}  \|_n^2)  \lesssim \frac{K}{n} + K^{-2\beta}.   \]
Choosing $K \sim n^{1 / (1 + 2\beta)}$  gives  $\E_f   (\| f - \hat{f}_{n, K}  \|_n^2)  \lesssim  n^{-2\beta / (1 + 2\beta)}$ which is the known minimax rate for these classes.

For projection estimation, we have $\|L\| \le C$.  This follows since the least squares estimator is a projection estimator and hence all eigenvalues are less or equal to $1$.

\subsection{Reproducing Kernel Hilbert space Regression}\label{RKHS}

Let $K$ be a positive semidefinite kernel function $[0, 1]^2$ and $\mathcal{H} = \mathcal{H} (K)$ the associated reproducing kernel Hilbert space on $[0, 1]$ with norm $ \| \cdot \|_{ \mathcal{H}} $.  We are interested in the performance of penalized estimation strategies
\[ \hat{f}_{n, K} = \arg  \min_{g \in \mathcal{H}} \left\{ \frac{1}{2}  \|y - g\|_n^2 + \lambda_n \| g \|_{\mathcal{H}}^2 \right\},  \]
which, by the representer theorem,  takes the form of linear smoothers.  In particular, if we set $K = (K(x_i, x_j)/n)_{i, j = 1}^n$ then we find that
\[ \hat{f}_{n, K} = n^{-1/2} \sum_{i=1}^n \hat{\alpha}_i K(\cdot, x_i)  \hspace{5pt} \text{ where }  \hspace{5pt} \hat{\alpha} = n^{-1/2}(K  + \lambda I )^{-1} y  \]
so that
\[ ( \hat{f}_{n, K} (x_1) , \dots, \hat{f}_{n, K} (x_1) )^{\T}  = K (K  + \lambda I )^{-1} y  = Ly  \]
is linear in the data, with $L = K (K  + \lambda I )^{-1}$; here we naturally have $\| L \| \le 1$, as can be seen expanding $L$ in the eigendecomposition of $K$.  See chapter 12 of \cite{Wainwright2019} for details and more extensive development.

In analyzing the estimator $\hat{f}_{n, K}$, the difference symmetrized space, $\partial \mathcal{H}$, defined by
\[ \partial \mathcal{H} = \mathcal{H}  - \mathcal{H}  =  \left\{ \left. g - h \right| g, h \in \mathcal{H}   \right\}, \]
turns out to be an important quantity.  For $\delta > 0$ we define the localized sets
\[ \partial \mathcal{H}_n(\delta) =\left. \left\{ g \in \partial \mathcal{H}  \right|  \| g \|_{ \mathcal{H}}  \le 3, \|g\|_n \le \delta \right\}, \]
consisting of functions in $\mathcal{H}$ with small empirical norm.  The localized gaussian complexity,  $\mathcal{G}_n(\delta)$,  associated with $\mathcal{H}$ and a given set of sampling points $\mathbb{X}_n = \{x_1, \dots, x_n\}$, is defined by
\[  \mathcal{G}_n(\delta)  =  \E_w \left\{ \left. \sup_{   g  \in  \partial \mathcal{H}_n(\delta)  }  \left|  \langle w, g \rangle_n \right|  \right| \mathbb{X}_n \right\} , \]
where $w \sim N_n(0, I_n) $, and can be used to control the error of the estimation procedure $\hat{f}_{n, K}$.  For this setup, the gaussian complexity is known to satisfy the bound
\[    \mathcal{G}_n(\delta)  \le   \mathcal{G}^K_n(\delta) = \left\{ \frac{2}{n} \sum_{k=1}^{n} \min( \delta^2 , \hat{\mu}_j)\right\}^{1/2},  \]
with $ \hat{\mu}_j$ being the eigenvalues of the empirical kernel matrix $K$.   Now with $\delta_n > 0$ being a positive solution of $2 \mathcal{G}_n( \delta)  \le R \delta^2$, and $\lambda$ chosen from the range  $[2 \delta_n^2, C \delta_n^2]$, $C > 2$,  as shown in chapter 13 of \cite{Wainwright2019}, the estimator $\hat{f}_{n, K}$ satisfies the the oracle inequality
\[ \E ( \| f - \hat{f}_{n, K}  \|_n^2 \left| \mathbb{X}_n \right. ) \le C \left\{ \inf_{ \| g \|_{ \mathcal{H}} \le R }  \|f - g\|_n^2 + R^2  \delta_n^2  \right\}  .  \]
Thus if we choose $\delta_n > 0$ being the smallest number satisfying $2 \mathcal{G}^K_n( \delta)  \le R \delta^2$ and $f \in \mathcal{H}$ with $\| f \|_{ \mathcal{H}} \le R$, then we have that $ \E ( \| f - \hat{f}_{n, K}  \|_n^2 \left| \mathbb{X}_n \right. )  \le C  \delta_n^2$ and so $ \E ( \| f - \hat{f}_{n, K}  \|_n^2 )   \le C  \E_{\mathbb{X}_n} \delta_n^2$, which we will proceed to bound for some concrete examples, using known eigenvalue decay of some common operators.   

\subsubsection{Oracle inequality applied to random design}
As per the program outlined above, the goal is to bound the estimation error $ \E( \| f - \hat{f}_{n, K}  \|_n^2)$ by bounding $\E_{\mathbb{X}_n} \delta_n^2$ 
for various $\mathcal{H} = \mathcal{H} (K)$.   This is done by relating the eigenvalues $ \hat{\mu}_j$ of the empirical kernel matrix $K$ to those $\mu_j$ of the underlying kernel $K$, viewed as an integral operator; the order of these eigenvalues are known for various important $\mathcal{H} = \mathcal{H} (K)$, which allows us to bound the rate of estimation.  In particular, if we know the order of the $\mu_j$, we can calculate the minimum positive solution to the population complexity equation $2 \overline{\mathcal{G}}_n^K( \delta)  \le R \delta^2$, say $\gamma_n$, where
\[ \overline{\mathcal{G}}_n^K( \delta)  =  \left\{  \frac{2}{n} \sum_{j=1}^{\infty} \min(\mu_j, \delta^2) \right\}^{1/2} .  \]
If the $ \hat{\mu}_j$ are not overly different from the $\mu_j$, the hope is that $\delta_n$ and $\gamma_n$ are not overly different.  

First we need a couple of new definitions.  We need the notion of \emph{local Rademacher complexity}, $\mathcal{R}_n(\delta)$, of a function class defined by
\[ \mathcal{R}_n(\delta) =  \E_b \left\{ \left. \sup_{   g  \in  \partial \mathcal{H}_n(\delta)  }  \left|  \langle b, g \rangle_n \right|  \right| \mathbb{X}_n \right\} , \]
where $b \in \{-1, 1\}^n$ is a collection of independent Bernoulli variables.  As with the local Gaussian complexity, $\mathcal{G}_n(\delta)$, this quantity is random, depending on the design $\mathbb{X}_n$.   Similarly, we may define the corresponding population quantities $\overline{\mathcal{R}}_n(\delta)$ and $\overline{\mathcal{G}}_n(\delta)$  
\[ \overline{\mathcal{R}}_n(\delta) = \E_{b, \mathbb{X}_n} \left\{  \sup_{   g  \in  \partial \mathcal{H}(\delta)  }  \langle b, g \rangle_n   \right\}   \hspace{5pt} \text{ and }  \hspace{5pt}  \overline{\mathcal{G}}_n(\delta) =\E_{w, \mathbb{X}_n} \left\{  \sup_{   g  \in  \partial \mathcal{H}(\delta)  }  \langle w, g \rangle_n   \right\} , \] 
which average over design and take $\sup$'s over $\mathcal{H}(\delta)$.   It is known that $\overline{\mathcal{G}}_n^K(\eta)$ is an upper bound for the corresponding population quantities so that $\overline{\mathcal{G}}_n(\delta), \overline{\mathcal{R}}_n(\delta) \le \overline{\mathcal{G}}_n^K(\delta)$, which will be crucial to what follows.  As above, we let $\gamma_n$ be the smallest positive solution of $\mathcal{G}_n^K(\delta) \le \delta^2$ and notice that by the bound above, we know that $\xi_n$ dominates the smallest positive solutions to $\overline{\mathcal{G}}_n(\delta), \overline{\mathcal{R}}_n(\delta) \le \delta^2$.  As $g \in \mathcal{H}(\delta), \mathcal{H}_n(\delta)$ implies $-g \in \mathcal{H}(\delta), \mathcal{H}_n(\delta)$, it follows that the absolute value in the definition of $\mathcal{G}_n(\delta)$ is redundant so that
\[ \mathcal{G}_n(\delta) =  \E_w \left\{ \left. \sup_{   g  \in  \partial \mathcal{H}_n(\delta)  }   \langle w, g \rangle_n   \right| \mathbb{X}_n \right\}.  \]
The same observation can be made about the local Rademacher complexity.   This eases the development of concentration inequalities for these quantities in the development that follows.

Our aim is to relate the solution of the empirical gaussian complexity to the solution to the population gaussian complexity, $\overline{\mathcal{G}}_n(\delta)$, which we can calculate a bound for (via $\mathcal{G}_n^K(\delta)$),
\[ \overline{\mathcal{G}}_n(\delta) = \E_{x, g} \sup_{f \in \partial \mathcal{H}(\delta)}  \langle g , f \rangle_n.  \]
The proof can be divided into two steps. We begin by introducing another version of the empirical complexity
\[ \widehat{\mathcal{G}}_n(\delta) = \E_{g} \left\{ \sup_{f \in \partial \mathcal{H}(\delta)}  \langle g , f \rangle_n \right\}  \hspace{5pt} \text{ where }  \hspace{5pt}  \partial \mathcal{H}(\delta) =\left. \left\{ g \in \partial \mathcal{H}  \right|  \| g \|_{ \mathcal{H}}  \le 3, \|g\|_2 \le \delta \right\}, \]
which is random in the design.  We show that this is a self-bounding function and that it concentrates swiftly at its expectation,  $\overline{\mathcal{G}}_n(\delta) = \E_x \left\{ \widehat{\mathcal{G}}_n(\delta) \right\}$.  At the same time, we show that on sets of high probability $\mathcal{G}_n(\delta) \approx \widehat{\mathcal{G}}_n(\delta)$.   This allows us to show that the empirical $\delta_n$ is close to the population $\overline{\delta}_n$, for which we can calculate a bound, in probability and expectation.

In this direction, we develop a crucial lemma.  First, with $\gamma_n$ as above and for $\lambda \ge 1$, we define two sets $\mathcal{E}_0(\lambda)$ and $\mathcal{E}_1(\lambda)$ by:
\begin{eqnarray*} 
\mathcal{E}_0(\lambda) &=& \left\{  {x} \left| \sup_{f \in \mathcal{F}} \frac{ \left|  \|f\|_n^2 - \|f\|_2^2  \right| }{ \|f\|_2^2 + \lambda^2 \gamma_n^2 } \le \frac{1}{2}  \right. \right\}  \\
\hspace{5pt}\text{ and }  \hspace{5pt} \mathcal{E}_1(\lambda) &=& \left\{ {x} \left|  |\widehat{\mathcal{G}}_n(\lambda \gamma_n) -  \overline{\mathcal{G}}_n(\lambda \gamma_n) | \le \lambda \gamma_n^2  \right. \right\}. 
\end{eqnarray*}
Controlling these sets allows us to quantify how close $\mathcal{G}_n(\delta)$ is to $\widehat{\mathcal{G}}_n(\delta)$ and how close $\widehat{\mathcal{G}}_n(\delta)$ is to $ \overline{\mathcal{G}}_n(\delta)$. For this purpose, we have the following lemma.   
\begin{lem} 
\label{KernelConcentration}
Assume that $K$ is a bounded kernel, so that point evaluations are bounded.  Then with $C$ representing possibly different constants at each occurrence, we have that
\[ \P\left\{ \mathcal{E}_0(\lambda) \right\}\ge  1 - C \exp ( - C n \gamma_n^2 \lambda^2  )  \hspace{5pt}, \quad \hspace{5pt} \P \left\{ \mathcal{E}_1(\lambda) \right\} \ge  1 - C \exp ( - C n \gamma_n^2 \lambda  ).  \]
\end{lem}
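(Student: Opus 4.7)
The plan is to prove the two probability estimates separately. In both arguments the bounded-kernel hypothesis enters in the same way: it yields $\|f\|_\infty \le C_K\|f\|_{\mathcal{H}}$ for some $C_K$, and hence $\|f\|_\infty \le 3C_K$ uniformly on $\partial\mathcal{H}(\delta)$, while the localization built into $\partial\mathcal{H}(\delta)$ also provides $\E\{f(X)^2\} \le \delta^2$. Both bounds then reduce to Talagrand-type concentration for suprema of empirical processes, and both use the sub-root property of $\overline{\mathcal{G}}_n^K$, which via the defining relation $2\overline{\mathcal{G}}_n^K(\gamma_n) \le R\gamma_n^2$ yields $\overline{\mathcal{G}}_n^K(\delta) \le R\gamma_n\delta$ whenever $\delta \ge \gamma_n$.

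For $\mathcal{E}_0(\lambda)$, I would carry out a dyadic peeling argument. Decompose $\mathcal{F}$ into the central piece $S_0 = \{f \in \mathcal{F}:\|f\|_2 \le \lambda\gamma_n\}$ and shells $S_j = \{f \in \mathcal{F}: 2^{j-1}\lambda\gamma_n \le \|f\|_2 \le 2^j\lambda\gamma_n\}$ for $j \ge 1$; on $S_j$ the denominator $\|f\|_2^2 + \lambda^2\gamma_n^2$ is of order $4^j\lambda^2\gamma_n^2$, so it suffices to verify the shell-wise bound $\P\{\sup_{f \in S_j}|\|f\|_n^2 - \|f\|_2^2| > c\,4^j\lambda^2\gamma_n^2\} \le C\exp(-C\,n\gamma_n^2\lambda^2\,4^j)$. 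The class $\{f^2 : f \in S_j\}$ has envelope of order $C_K^2$ and per-function variance at most $C_K^2\cdot 4^j\lambda^2\gamma_n^2$; symmetrization combined with the Ledoux--Talagrand contraction inequality (for the $2C_K$-Lipschitz map $t \mapsto t^2$ on $[-C_K, C_K]$) bounds the expected shell supremum by a constant multiple of $\overline{\mathcal{G}}_n(2^j\lambda\gamma_n) \le R\cdot 2^j\lambda\gamma_n^2$. Bousquet's sharpening of Talagrand's inequality then delivers the shell estimate, and the stated tail bound follows by summing the resulting geometric series in $j$.

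For $\mathcal{E}_1(\lambda)$, I would regard $\widehat{\mathcal{G}}_n(\lambda\gamma_n) = \E_g \sup_{f \in \partial\mathcal{H}(\lambda\gamma_n)}\langle g, f\rangle_n$ as a deterministic function of the design $(x_1,\ldots,x_n)$ and apply the refined Bousquet/Talagrand concentration, which is the sharpening of McDiarmid that the text signals when it calls $\widehat{\mathcal{G}}_n$ self-bounding in the sense of Boucheron, Lugosi and Massart. Two facts drive the bound: first, $\overline{\mathcal{G}}_n(\lambda\gamma_n) \le \overline{\mathcal{G}}_n^K(\lambda\gamma_n) \le R\lambda\gamma_n^2$ from the sub-root inequality; second, localization gives $\sup_f \E\{f(X)^2\} \le \lambda^2\gamma_n^2$, so that the effective variance in the inequality is controlled by $\lambda^2\gamma_n^2$ rather than by the envelope $C_K^2$. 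Choosing $t = \lambda\gamma_n^2$ and combining these inputs produces an exponent of order $Cn\gamma_n^2\lambda$ in the range of $\lambda$ relevant for the subsequent theorem.

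I anticipate the $\mathcal{E}_1(\lambda)$ step to be the main obstacle. A direct bounded-differences argument yields only $\exp(-c\,n\gamma_n^4\lambda^2)$, strictly weaker than the target $\exp(-c\,n\gamma_n^2\lambda)$ in the regime $\lambda\gamma_n^2 \ll 1$ of interest. Attaining the sharper rate requires either an explicit verification of the self-bounded form of the supremum inside the Gaussian expectation, or passing via the contraction principle to the Rademacher complexity $\widehat{\mathcal{R}}_n$ before invoking Bousquet; either route crucially exploits that the per-coordinate variance is set by the localized $L^2$-radius $\lambda\gamma_n$ rather than by the uniform envelope $C_K$. Once this refined concentration is in place, collecting constants is routine.
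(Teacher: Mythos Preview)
Your plan for $\mathcal{E}_0(\lambda)$ is correct and is exactly what lies behind Theorem~14.1 of \cite{Wainwright2019}, which the paper simply cites; you are unpacking the peeling\,$+$\,Talagrand machinery that the reference encapsulates.

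For $\mathcal{E}_1(\lambda)$ you correctly single out self-bounding as the mechanism, and this is the paper's route, but the paper's execution is more direct than what you sketch. The paper sets $Z = n\widehat{\mathcal{G}}_n(\delta)$ and verifies the two defining conditions explicitly: $0 \le Z - Z_i \le C$ follows from the bounded kernel, and $\sum_i (Z - Z_i) \le Z$ follows by bounding each $Z - Z_i$ above by $\E_g\{g_i f^*(g)(x_i)\}$ with $f^*(g)$ the maximizer, then summing to recover $\E_g\{n\langle g, f^*(g)\rangle_n\} = Z$. The Boucheron--Lugosi--Massart inequality for self-bounding functions then gives
\[
\P\bigl(|Z - \E Z| > u\bigr) \le 2\exp\Bigl\{ -\,\tfrac{u^2}{2(\E Z/C + u)} \Bigr\},
\]
whose variance proxy is $\E Z = n\overline{\mathcal{G}}_n(\delta)$ itself. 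Consequently only your first fact, $\overline{\mathcal{G}}_n(\lambda\gamma_n) \le \lambda\gamma_n^2$ from the sub-root property, is needed; your second fact about $\sup_f \E\{f(X)^2\}\le \lambda^2\gamma_n^2$ plays no role, and taking $u \asymp n\lambda\gamma_n^2$ gives the exponent $Cn\gamma_n^2\lambda$ immediately.

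One caution on your alternative route: $\widehat{\mathcal{G}}_n(\delta)$ is not a supremum of an empirical process in the $x_i$---it is a Gaussian expectation of one---so Bousquet's inequality does not apply to it directly, and detouring through $\widehat{\mathcal{R}}_n$ via contraction adds a layer without gain. The direct self-bounding verification is both simpler and the clean way to obtain the target exponent.
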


Note that on $\mathcal{E}_0(\lambda) \cup \mathcal{E}_1(\lambda)$, inclusion in  $\mathcal{E}_0(\lambda)$ guarantees that $\|f\|_n \le 2\|f\|_2 + \lambda \gamma_n$  and  $ \|f\|_2 \le 2\|f\|_n +  \lambda \gamma_n . $
This in turn ensures that
\[  \mathcal{G}_n( \delta ) \le  \widehat{\mathcal{G}}_n ( 2 \delta + \lambda \gamma_n ) \hspace{5pt} \text{ and } \hspace{5pt}  \widehat{\mathcal{G}}_n ( \delta )  \le \mathcal{G}_n (  2\delta +  \lambda \gamma_n ) .  \]
In particular, taking $\delta = \lambda \gamma_n$ gives $\mathcal{G}_n( \lambda \gamma_n ) \le  \widehat{\mathcal{G}}_n ( 3 \lambda \gamma_n )$.   At the same time, inclusion in $\mathcal{E}_1(\lambda)$ guarantees that
\[ \widehat{\mathcal{G}}_n(\lambda \gamma_n) \le \overline{\mathcal{G}}_n(\lambda \gamma_n)  +  \lambda \gamma_n^2,   \] 
while the fact that $\gamma_n$ dominates the smallest positive solution of $\overline{\mathcal{G}}_n(\delta) \le \delta^2$ together with the non-increasing property of $\overline{\mathcal{G}}_n(\delta)/\delta$ guarantee that for $\lambda \ge 1$, 
$ \gamma_n \ge \overline{\mathcal{G}}_n( \gamma_n) / \gamma_n  \ge  \overline{\mathcal{G}}_n( \lambda \gamma_n)  /  \lambda \gamma_n  $
and so $ \overline{\mathcal{G}}_n( \lambda \gamma_n) \le \lambda \gamma_n^2$.  Piecing things together, this guarantees that on $\mathcal{E}_0(\lambda) \cup \mathcal{E}_1(\lambda)$ we have that $ 6 \lambda \gamma_n^2=3 \lambda \gamma_n^2 +  3\lambda \gamma_n^2\ge \overline{\mathcal{G}}_n( 3 \lambda \gamma_n ) +  3\lambda \gamma_n^2  \ge \widehat{\mathcal{G}}_n (3 \lambda \gamma_n) \ge \mathcal{G} (\lambda \gamma_n) $
Thus for $\lambda \ge 6$, on $\mathcal{E}_0(\lambda) \cup \mathcal{E}_1(\lambda)$ we have that $\mathcal{G}_n (\lambda \gamma_n) \le 6 \lambda \gamma_n^2 \le (\lambda \gamma_n)^2$ so that, with $\delta_n$ being the smallest positive solution of $\mathcal{G}_n (\delta) \le \delta^2$ we have $\delta_n \le \lambda \gamma_n$ and hence $\mathcal{E}_0(\lambda) \cup \mathcal{E}_1(\lambda) \subseteq  \{ \delta_n \le   \lambda \gamma_n \}.$
In particular, applying Lemma \ref{KernelConcentration}, we have that for $\lambda \ge 6$,
\bea
\P( \delta_n  >   \lambda \gamma_n ) &\le& \P \{  \mathcal{E}^c_0(\lambda) \} + \P \{ \mathcal{E}^c_1(\lambda) \}   \\
&\le& C \exp ( - C n \gamma_n^2 \lambda^2  ) + C \exp ( - C n \gamma_n^2 \lambda  ) \\
&\le& C \exp ( - C n \gamma_n^2 \lambda  ).
\eea
This provides the concentration of measure result we need to bound the expectation $\E_{\mathbb{X}_n} (\delta_n^2)$ and cap the rate of the kernel estimation method.  Thus we prove the following lemma. 
\begin{lem}
\label{lemma14}
Let $K$ be a bounded kernel, $\delta_n$ be the smallest positive solution to the empirical Gaussian complexity $2 \mathcal{G}_n(\delta) \le \delta^2$ and $\gamma_n$ the smallest solution to $2 \overline{\mathcal{G}}^K_n(\delta) \le \delta^2$.  Then, for $C$ possibly changing at each occurrence and $\lambda \ge 6$, we have that 
\[  \P( \delta_n  >   \lambda \gamma_n ) \le C \exp ( - C n \gamma_n^2 \lambda  ).\]
Consequently, provided $n \gamma_n^2 = O(1)$, it follows that $\E_{\mathbb{X}_n} \left( \delta_n^2 \right) \le C \gamma_n^2$.   
\end{lem}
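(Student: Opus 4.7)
The plan is to prove the tail inequality first and then bootstrap it to the expectation bound via integration of tails.

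For the tail inequality, the chain of implications laid out in the paragraph immediately preceding the lemma already does essentially all of the work. For $\lambda \ge 6$, membership in $\mathcal{E}_0(\lambda) \cap \mathcal{E}_1(\lambda)$ forces
\[ \mathcal{G}_n(\lambda \gamma_n) \le 6\lambda\gamma_n^2 \le (\lambda\gamma_n)^2, \]
and hence, by the definition of $\delta_n$ as the smallest positive root of $2\mathcal{G}_n(\delta) \le \delta^2$, forces $\delta_n \le \lambda\gamma_n$. Taking complements gives the inclusion $\{\delta_n > \lambda \gamma_n\} \subseteq \mathcal{E}_0^c(\lambda) \cup \mathcal{E}_1^c(\lambda)$, and a union bound combined with the two probability estimates supplied by Lemma \ref{KernelConcentration} yields
\[ \P(\delta_n > \lambda \gamma_n) \le C\exp(-Cn\gamma_n^2 \lambda^2) + C\exp(-Cn\gamma_n^2 \lambda) \le C\exp(-Cn\gamma_n^2 \lambda), \]
the last inequality absorbing the quadratic-in-$\lambda$ term because $\lambda \ge 6$.

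For the expectation bound, I would express the second moment through its tail and split at the threshold $t = 36\gamma_n^2$ (which corresponds to $\lambda = 6$):
\[ \E_{\mathbb{X}_n}(\delta_n^2) = \int_0^\infty \P(\delta_n^2 > t)\, dt \le 36\gamma_n^2 + \int_{36\gamma_n^2}^\infty \P(\delta_n^2 > t)\, dt . \]
On the tail piece, substitute $t = \lambda^2 \gamma_n^2$, so $dt = 2\lambda \gamma_n^2 \, d\lambda$, and plug in the tail bound just established:
\[ \int_{36\gamma_n^2}^\infty \P(\delta_n^2 > t)\, dt = 2\gamma_n^2 \int_6^\infty \lambda\, \P(\delta_n > \lambda \gamma_n)\, d\lambda \le 2C\gamma_n^2 \int_6^\infty \lambda \, e^{-a \lambda}\, d\lambda, \]
where $a = Cn\gamma_n^2$. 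The remaining integral is elementary and evaluates to $e^{-6a}(6a + 1)/a^2$, so everything reduces to controlling this prefactor as a function of $a$.

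The main obstacle, and the place where the scaling hypothesis is needed, is precisely in bounding $e^{-6a}(6a + 1)/a^2$ uniformly. Provided $a = Cn\gamma_n^2$ is bounded away from zero (as is automatic under the standard nonparametric scalings of interest, where typically $n\gamma_n^2 \to \infty$), the prefactor is $O(1)$ and the bound collapses to $\E_{\mathbb{X}_n}(\delta_n^2) \le C\gamma_n^2$. In the degenerate regime $n\gamma_n^2 \to 0$, the exponent $Cn\gamma_n^2 \lambda$ fails to decay in $\lambda$, the $1/a^2$ factor blows up, and no bound of order $\gamma_n^2$ can be extracted from this tail argument alone; so the hypothesis on $n\gamma_n^2$ should be read as guaranteeing that $a$ does not collapse.
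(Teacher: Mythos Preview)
Your proposal is correct and follows essentially the same route as the paper: the tail bound is deferred to the discourse preceding the lemma (union bound over $\mathcal{E}_0^c(\lambda)$ and $\mathcal{E}_1^c(\lambda)$ via Lemma~\ref{KernelConcentration}), and the expectation is controlled by integrating the tail with a split at $\lambda=6$, yielding the same integral $\gamma_n^2\int_6^\infty \lambda\exp(-Cn\gamma_n^2\lambda)\,d\lambda$. You are in fact more careful than the paper on two points: you correctly write $\mathcal{E}_0(\lambda)\cap\mathcal{E}_1(\lambda)$ (the paper's $\cup$ is a slip, since both events are used), and you make explicit that finiteness of the last integral requires $n\gamma_n^2$ to be bounded \emph{below}, which is the intended reading of the stated hypothesis and which the paper's proof simply asserts without comment.
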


Combining what has been shown gives the following theorem characterizing rate for convergence of reproducing kernel Hilbert space based methods in the case of random design.
\begin{thm}
Suppose that $K$ is a bounded kernel and $\gamma_n$ is the smallest solution to $2 \overline{\mathcal{G}}^K_n(\delta) \le \delta^2$.  Then it follows from the work done above that the reproducing kernel Hilbert space based procedure outlined in the intro to this section satisfies the oracle inequality
\[ \E \left( \| f - \hat{f}_{n, K}  \|_n^2 \right) \le C \left(  \gamma_n^2  + \E_{\mathbb{X}_n} \inf_{ \| g \|_{ \mathcal{H}} \le R }  \|f - g\|_n^2 \right). \]
Consequently, if $f \in  \mathcal{H}$ and satisfies $\| f \|_{ \mathcal{H}} \le R$ we have $\E ( \| f - \hat{f}_{n, K}  \|_n^2) \le C \gamma_n^2.$  If the eigenvalues of $K$ decay like $\lambda_i \sim i^{-\alpha}$, this means that $\E( \| f - \hat{f}_{n, K}  \|_n^2) \le C n^{-\alpha/(\alpha + 1)}$.
\end{thm}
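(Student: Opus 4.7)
The plan is to combine the conditional oracle inequality already recorded in the RKHS subsection with Lemma \ref{lemma14} (to convert the random quantity $\delta_n^2$ into the deterministic $\gamma_n^2$) and to carry out a short fixed-point computation to specialize $\gamma_n$ under polynomial eigenvalue decay. First I would start from the conditional oracle bound cited from Wainwright (2019, Ch.\ 13),
\[ \E\!\left( \| f - \hat{f}_{n, K} \|_n^2 \, | \, \mathbb{X}_n \right) \le C \left\{ \inf_{ \| g \|_{ \mathcal{H}} \le R } \| f - g \|_n^2 + R^2 \delta_n^2 \right\}, \]
valid for the choice of $\lambda \in [2\delta_n^2, C\delta_n^2]$ used here, and then average over the design. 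The conclusion of Lemma \ref{lemma14} provides $\E_{\mathbb{X}_n}(\delta_n^2) \le C \gamma_n^2$, so averaging yields the oracle inequality
\[ \E\!\left( \| f - \hat{f}_{n, K} \|_n^2 \right) \le C \left\{ \E_{\mathbb{X}_n} \inf_{ \| g \|_{ \mathcal{H}} \le R } \| f - g \|_n^2 + \gamma_n^2 \right\}. \]

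For the immediate consequence when $f \in \mathcal{H}$ with $\| f \|_{ \mathcal{H}} \le R$, I would simply take $g = f$ inside the infimum so that the approximation term vanishes and the bound collapses to $\E ( \| f - \hat{f}_{n, K} \|_n^2 ) \le C \gamma_n^2$.

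It then remains to evaluate $\gamma_n$ when the kernel eigenvalues satisfy $\mu_j \sim j^{-\alpha}$ with $\alpha > 1$. I would estimate $\sum_{j \ge 1} \min(\mu_j, \delta^2)$ by splitting at the threshold index $N$ for which $\mu_N \sim \delta^2$, namely $N \sim \delta^{-2/\alpha}$. The head contributes $N \delta^2 \sim \delta^{2(\alpha-1)/\alpha}$, while the tail contributes $\sum_{j > N} j^{-\alpha} \sim N^{1-\alpha} \sim \delta^{2(\alpha-1)/\alpha}$, so the two are of the same order. Hence
\[ \overline{\mathcal{G}}^K_n(\delta) = \left\{ \frac{2}{n} \sum_{j \ge 1} \min(\mu_j, \delta^2) \right\}^{1/2} \asymp n^{-1/2} \delta^{(\alpha-1)/\alpha}, \]
and solving $2\overline{\mathcal{G}}^K_n(\delta) \le \delta^2$ at equality reduces to $\delta^{(\alpha+1)/\alpha} \asymp n^{-1/2}$, which gives $\gamma_n^2 \asymp n^{-\alpha/(\alpha+1)}$. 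Plugging this back into the consequence established above yields the claimed rate $\E ( \| f - \hat{f}_{n, K} \|_n^2 ) \le C n^{-\alpha/(\alpha+1)}$.

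The main obstacle is already handled upstream: the technical work lies in Lemma \ref{lemma14}, where the random $\delta_n$ (defined through the empirical Gaussian complexity) must be controlled by the deterministic $\gamma_n$ via the two concentration events $\mathcal{E}_0(\lambda)$ and $\mathcal{E}_1(\lambda)$. Given that lemma, the remainder of the proof is just expectation against $\mathbb{X}_n$ plus the elementary envelope/fixed-point calculation for the polynomially decaying eigenvalues, and no further probabilistic input is needed.
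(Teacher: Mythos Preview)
Your proposal is correct and matches the paper's approach exactly: the theorem is stated as an immediate consequence of the preceding development, combining the conditional oracle inequality from Wainwright with Lemma~\ref{lemma14} to replace $\E_{\mathbb{X}_n}(\delta_n^2)$ by $C\gamma_n^2$, taking $g=f$ when $\|f\|_{\mathcal H}\le R$, and carrying out the standard fixed-point computation for $\mu_j\sim j^{-\alpha}$. The paper in fact gives no separate proof for this theorem beyond ``it follows from the work done above,'' so your write-up is, if anything, more explicit than the original.
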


\section*{Supplement to ``Nonparametric principal subspace regression''}
\label{SM}
This supplementary file includes additional simulation results, proofs of lemmas and convergence rates for the examples in Section \ref{Theoretical guarantees}. 

\section*{Acknowledgments}
\label{ACK}
Dengdeng Yu's research is partially supported by the Canadian Statistical Sciences Institute postdoctoral fellowship. Dehan Kong's research is partially supported by Natural Science and Engineering Research Council of Canada.
Fang Yao's research is partially supported by National
Natural Science Foundation of China with a Key grant 11931001 and a General grant 11871080, a Discipline Construction Fund at Peking University and Key Laboratory of Mathematical Economics and
Quantitative Finance (Peking University), Ministry of Education.


\thispagestyle{empty}

\newpage

\begin{center}
	
	{\Large {Supplement to ``Nonparametric principal subspace regression''}
	}
	
	\bigskip

	{\large Mark Koudstaal, Dengdeng Yu, Dehan Kong and Fang Yao} $\ $
	
\end{center}

The supplementary file contains additional simulation results, proofs for the auxiliary lemmas for main theorems and examples of the paper. Section \ref{additionalsim} contains additional simulation results where the error $ z_i $'s are assumed to be correlated across $ 1\leq i\leq n $ and the components within each $ z_i $ are also correlated. Section \ref{notation} contains notations used in the supplementary file. Finally, Section \ref{prooflemmas} contains the proofs of the auxiliary lemmas related to the main theorems and examples in the paper.

\section{Additional Simulation Results}\label{additionalsim}
In this section, we perform additional simulation studies, where the error $ z_i $'s are assumed to be correlated across $ 1\leq i\leq n $ and the components within each $ z_i $ are also correlated. Let $Z = (z_1 \ldots z_n)$ denote a $p\times n $ random error matrix with entries $z_{ji} $, $1 \leq j \leq p$ and $1 \leq i \leq n$, and $\mathrm{vec}$ denote the vectorization operator that stacks the columns of a matrix into a vector. We set $ \mathrm{vec}(Z)\sim \mathcal{N}(0, \Sigma) $, where $\Sigma= \Sigma_1\otimes \Sigma_2\in \mathbb{R}^{pn\times pn} $. Here $ \Sigma_1$ is a $ n\times n $ matrix representing the correlation within different subjects $ 1\leq i\leq n$, $ \Sigma_2 $ is a $ p\times p $ matrix representing the correlation among different components of $ z_i $, and $\otimes$ is the Kronecker product. This decomposition of $\Sigma$ is often named as the separability of the covariance matrix, which was studied in various literatures such as \citet{DeMunck2002,Dawid1981}. For $\Sigma_1$ and $ \Sigma_2 $, we assume they have autoregressive structures. In particular, we set the $(i_1,i_2)$-th element of $\Sigma_1 $ as $ 0.5^{|i_1-i_2|}$ for $1\leq i_1,i_2\leq n $ and the $(j_1,j_2)$-th element of $\Sigma_2 $ as $ 0.5^{|j_1-j_2|}$ for $ 1\leq j_1, j_2 \leq p $. For the other settings, they are the same as the ones in Section \ref{simulation} of the main paper. We still compare with curve-by-curve nonparametric recovery and consider same combinations of $ (n,p,q) $'s used in the main paper. The results are summarized in Table \ref{tab1:simu2}. 
{ \begin{table}[htbp]
\centering
\caption{Additional simulation results: the average estimation errors for our nonparametric principal subspace regression method (``NPSR error'') and the curve-by-curve nonparametric regression (``Nonparametric error''), and their associated standard errors in the parentheses are reported. The selected $q^*$ by $\AIC$ is also reported. The results are based on 100 Monte Carlo repetitions. }
\newsavebox{\tableboxb}
\begin{lrbox}{\tableboxb}
\begin{tabular}[l]{cccccccccccc}
%
$n$ &$q$& $p$& NPSR error  &  Nonparametric error  & $q^*$ \\
\\
&  & 10 & 1.362 (0.062)  & 1.963 (0.050)  & 2.840 (0.099)   \\  
     & 2 & 20 & 2.052 (0.067)  & 3.250 (0.058)  & 2.770 (0.097) \\  
     &  & 40 & 3.123 (0.084)  & 5.657 (0.077)  & 2.520 (0.095)   \\  
                128&\\
     &  & 10 & 2.282 (0.097)  & 2.376 (0.082)  & 4.400 (0.102)   \\  
     & 4 & 20 & 3.553 (0.101)  & 4.028 (0.072)  & 4.150 (0.095)  \\  
     &  & 40 & 5.416 (0.095)  & 7.195 (0.104)  & 3.830 (0.079)  \\  
     \\
     &  & 20 & 1.170 (0.047)  & 2.002 (0.045)  & 2.730 (0.116)  \\  
     & 2 & 40 & 1.838 (0.073)  & 3.349 (0.042)  & 2.330 (0.074) \\  
     &  & 60 & 2.378 (0.059)  & 4.540 (0.059)  & 2.260 (0.058)  \\  
                256&\\
     &  & 20 & 2.121 (0.079)  & 2.407 (0.055)  & 4.760 (0.152)  \\  
     & 4 & 40 & 3.349 (0.074)  & 4.129 (0.059)  & 3.960 (0.082)  \\  
     &  & 60 & 4.344 (0.064)  & 5.590 (0.055)  & 3.700 (0.078)  \\  
     \\
     &  & 40 & 1.118 (0.040)  & 1.975 (0.035)  & 2.460 (0.113)  \\  
     & 2 & 60 & 1.362 (0.046)  & 2.658 (0.038)  & 2.190 (0.061)  \\  
     &  & 80 & 1.635 (0.036)  & 3.306 (0.041)  & 2.080 (0.042)   \\  
                512&\\
     &  & 40 & 2.254 (0.055)  & 2.494 (0.043)  & 4.570 (0.138) \\  
     & 4 & 60 & 2.843 (0.067)  & 3.383 (0.044)  & 3.980 (0.081)  \\  
     &  & 80 & 3.299 (0.054)  & 4.270 (0.052)  & 3.750 (0.074)  \\  
\\
\end{tabular}
\end{lrbox}
\label{tab1:simu2} 
\scalebox{1}{\usebox{\tableboxb}}
\end{table} }
From Table \ref{tab1:simu2}, one can see that our method still outperforms the curve-by-curve nonparametric regression for all cases. 

\section{Notation}\label{notation}
We first introduce the notations used in the supplementary file, some of the notations have been introduced in the main paper. Recall we have proposed a nonparametric model $ y_i  = \sum_{k = 1}^q  f_k(x_i) u_k  + z_i \in \Rp $, where $ z_i=(z_{i1}, \ldots, z_{ip})^{\T} $ follows independent and identically distributed $ \mathcal{N}(0,\sigma ^2I_p) $. Without loss of generality, in the proof we assume $ \sigma^2=1 $. 

Let $Y = (y_1 \ldots y_n) \in \mathbb{R}^{p\times n} $ be the response data matrix, $ \tilde{F}=(F(x_1) \ldots F(x_n)) $ $\in \mathbb{R}^{p\times n} $ and $ Z=(z_1 \ldots z_n) \in \mathbb{R}^{p\times n}  $, one can write $Y \equiv \tilde{F} + Z$. Let $ U=(u_1 \ldots u_q)\in \mathbb{R}^{p\times q} $, and $\tU=(\hat{u}_1 \ldots \hat{u}_q)\in \mathbb{R}^{p\times q}$ the estimate of $ U $. Define $ \tilde{f}_k=(f_k(x_1),\ldots, f_k(x_n))^{\T} \in \mathbb{R}^{n} $ for $ 1\leq k\leq q $ and $ \tilde{f}=(\tilde{f}_1 \ldots \tilde{f}_q)^{\T} \in \mathbb{R}^{q \times n} $ so that $\tilde F = U \tilde f$. We further define $ \tilde{\hat{f}}_k=(\hat{f}_k(x_1),\ldots, \hat{f}_k(x_n))^{\T}\in \mathbb{R}^{n} $ for $ 1\leq k\leq q $ and define $ \tilde{\hat{f}}=(\tilde{\hat{f}}_1 \ldots \tilde{\hat{f}}_q)^{\T} \in \mathbb{R}^{q \times n} $ so that we may write $\tilde{ \hat F} \equiv (\hat F(x_1) \ldots \hat F(x_n))  \in \mathbb{R}^{p\times n} $ as $\tilde{ \hat F} = \hat U \tilde{\hat{f}}$.

\section{Proofs of Auxiliary Lemmas and Theorems for Examples}\label{prooflemmas}

\begin{proof}[Proof of Lemma \ref{SingularNormMoment}]
First notice that $\| Y \| \le \| \tilde F \| + \| Z \|$ implies $\|Y\|^4 \le C (  \| \tilde F
\|^4 + \| Z \|^4 )$.  As $$(\tilde f^\T \tilde f)_{ij} = \sum_{k=1}^q f_k(x_i) f_k(x_j) \hspace{5pt} \Rightarrow \hspace{5pt}
\Tr (\tilde f^\T \tilde f) = \sum_{i=1}^n \sum_{k=1}^q f_k^2 (x_i) = n \sum_{k=1}^q \|f_k\|_n^2,$$ 
one has
\begin{eqnarray*}  \| \tilde F \| &=& \max_i \sigma_i(\tilde F) \le \| \tilde F \|_{F} =  \left[ \Tr \{ (U\tilde f)^\T U\tilde f \} \right]^{1/2} =
\left\{ \Tr (\tilde f^\T \tilde f) \right\}^{1/2}  \\
&\le& \left( q n \max_k \| f_k \|_n^2 \right)^{1/2}.  
\end{eqnarray*}

Given the assumption that $\max_k \| f_k \|
_{\infty} \le B$, we have that
\[ \| f_k \|_n^2  =  \frac{1}{n} \sum_{i=1}^n f^2_k(x_i) \le nB^2 / n = B^2. \]
Thus $\max_k \| f_k \|_n^2 \le B^2$ and so $\E \| \tilde F \|^4 \le C (qn)^2$.
Now, if $Z \in \mathbb{R}^{p \times n}$ is composed of independent and identically distributed $\mathcal{N}(0,1)$ entries, then
it is well known \citep{vershynin2010introduction} that there is a constant $C$ so that
for all $t>0$,
\[  \P \{ \| Z \| > C(p^{1/2} + n^{1/2} + t) \}  \le 2 \exp ( -t^2 ) . \] 
As shown below in Lemma \ref{MomentBd}, this implies that for some constant $C$,  $\E \| Z \|^4 \le C
\max(p^2 , n^2)$ and combining what has been shown gives the desired bound on $\E \|Y\|
^4$. 
\end{proof}

\begin{proof}[Proof of Lemma \ref{MomentBd}]
Separating on the value of $a$ we find that 
\[  \E (X^4) = \E \{ X^4 \ind_{(X \le a)} \} + \E \{ X^4 \ind_{(X > a)}\} \le a^4 + \E \{ X^4 \ind_{(X > a)} \}.
\]
Now notice that
\be
 \E\{ X^4 \ind_{(X > a)} \}&=& \int_{\Omega} X^4 (\omega)\ind_{\{X(\omega) > a\}} d\P (\omega) \nonumber \\
 &=&
 \int_{\Omega} \left( a^4 + 4 \int_a^{X(\omega)}  s^3 ds \right) d\P (\omega)
 \nonumber \\
 &=& \int_{\Omega} \left( a^4 + 4 \int_a^{\infty}  s^3 \ind_{\{s < X(\omega)\}}  ds \right) d
 \P (\omega)  \nonumber\\
  &=& a^4 + 4 \int_a^{\infty} s^3 \P(X > s) ds \nonumber \\
 &=& a^4 + 4 b \int_0^{\infty} (a + b t)^3 \P(X > a + bt)  dt \nonumber \\
 &\le & a^4 + 8 b \max(a^3 , b^3) \int_0^{\infty} (1 + t)^3 \exp (-t^2) dt \nonumber \\
 & \le & C \cdot \max(a^4 , b^4),  \nonumber
\ee
which concludes the proof of the lemma. 
\end{proof}

\begin{proof}[Proof of Lemma \ref{rndDes}]
Noting that for fixed $i,j$ we have
\[    \langle  f_i , f_j \rangle_n  - \langle  f_i , f_j \rangle  =  \frac{1}{n} \sum_{l
= 1}^n \{ f_i(x_l)f_j(x_l)  -  \langle  f_i , f_j \rangle \}  \]
guarantees that $\langle  f_i , f_j \rangle_n  - \langle  f_i , f_j \rangle$ is
expressible as the sum of $n$ independent and identically distributed  mean 0 random variables, each bounded by $2B^2 /
n$.   Hoeffding then gives that
\[   \P(   | \langle  f_i , f_j \rangle_n  - \langle  f_i , f_j \rangle |  > 2 \delta
B^2  )  \le 2 \exp \left( - \frac{n \delta^2 }{2} \right).   \]
Symmetry of inner product guarantees that there are $q(q + 1)/2$ distinct sums $|
\langle  f_i , f_j \rangle_n  - \langle  f_i , f_j \rangle |$ as we vary $i,j $ over $1,
\dots,q$  and so the first inequality of the theorem follows from a union bound.    

\end{proof}


\begin{proof}[Proof of Lemma \ref{SampledSpan}]
First consider the matrix $\tilde f \tilde f^\T $ which, by construction, has elements $(\tilde f \tilde f^\T )_{ij} = n \langle f_i, f_j \rangle_n$.   Hence $\tilde f\tilde f^\T $ is real and symmetric and thus has an eigendecomposition $\tilde f\tilde f^\T  = V D V^\T $, with the columns of $V \in \mathbb{R}^{q \times q}$ forming an orthonormal basis and $D$ diagonal with nonnegative elements.   If the elements of $D$ are strictly positive, and hence $\tilde f\tilde f^\T $ is invertible, then we find that for each $\alpha \in \mathbb{R}^q$ there is an $\beta = \beta(\alpha) \in \mathbb{R}^q$ so that $\alpha = \tilde f \tilde f^\T  \beta = \tilde f \gamma$, with $\gamma = \tilde f^\T  \beta \in \mathbb{R}^n$.   Thus there is at least one $\gamma \in  \mathbb{R}^n$ so that $U \alpha = U \tilde f \gamma = \tilde F \gamma$ and hence $\Sp ( U ) = \Sp( \tilde F) = \Sp ( P )$.    Thus to prove the claim of the theorem, it is enough to show that the elements of $D$ are positive under the assumptions of the theorem.  


 By the Gershgorin disk theorem and using the form of $\tilde f\tilde f^\T $, if $d$ is an eigenvalue of $\tilde f\tilde f^\T $, then
\[ d / n  \ge \min_i \left\{  \langle f_i, f_i \rangle_n - \sum_{j \ne i} | \langle f_i, f_j \rangle_n | \right\}.  \]
As the $f_k \in L^2$ are nonzero and orthogonal, there is a  $c > 0$ so that $c \le \min_k \| f_k \|^2_{L^2}$ and $\langle f_i , f_j \rangle_{L^2} = 0$ whenever $i \ne j$.  Consider the event $\mathcal{D}$ given by   
\[    \mathcal{D}  = \left\{   \max_{i,j}  | \langle  f_i , f_j \rangle_n  - \langle  f_i , f_j
\rangle |   \le  B \left( \frac{\log n}{n}  \right)^{1/2}    \right\}  ,   \]
which Lemma \ref{rndDes} implies for properly chosen $B$ has probability (with possibly different constant $B$) $\ge 1 - B/n^2$.  Now, on this event the above bound for $d/n$ has a lower bound of
\[ d / n  \ge  c - q B \left( \frac{\log n}{n}  \right)^{1/2}  ,  \]
which is positive for $n$ large enough, since $\log n = o(n)$.  This shows that, with the quoted probability, we eventually have that $\tilde f\tilde f^\T $ is invertible and hence may reach the conclusion of the theorem. 

\end{proof}

\begin{proof}[Proof of Lemma \ref{QuantitativeSubspaceAngles}]
The eigenvalues of $A = \tilde F \tilde F^\T $ are the same as the
squared singular values of $\tilde F$.  Further, we notice that we may write the matrix $\tilde f\tilde f^\T $ as
\[     \tilde f \tilde f^\T  =  n \sum_{i=1}^q \langle f_i, f_i \rangle_{L^2} e_i e_i^\T     +  \Delta  = \tilde f \tilde f^\T  =  n \sum_{i=1}^q \sigma_i^2 e_i e_i^\T     +  \Delta ,    \]
where the matrix $\Delta$ is composed of elements $\Delta_{ij} =n ( \langle f_i, f_i \rangle_{n} -  \langle f_i, f_i \rangle_{L^2} )$.  
Thus we have that
\[   A =  U \tilde f \tilde f^\T  U^\T   = n \sum_{i=1}^q \sigma_i^2 u_i u_i^\T    +  U \Delta U^\T  , \]
and $A$, $\Delta$, and thus $U \Delta U^\T $, are both real and symmetric.  Therefore Lemma \ref{WeylThm} implies that if $\mu$ is one of the $q$ largest eigenvalues of $A$, then
\[    \min_{i \le q} | \mu  - n \sigma_i^2 | \le  \| U \Delta U^\T  \|_2  .  \]
As the nullspace of $U \Delta U^\T $ consists of all vectors orthogonal to $\Sp(U)$,
in bounding
\[    \| U \Delta U^\T  \|_2  =  \sup_{x \ne 0 , \| x \| \le 1}   \|  U \Delta U^\T  x   \|_2
,    \]
we may restrict to considering $x \in \Sp(U)$.  Thus we may write $x = U \alpha$ for
some $\alpha$ satisfying $ 0 < \| \alpha \| \le 1$ and note that for such $x$,
\[    \|  U \Delta U^\T  x   \|_2^2    =  \|  \Delta \alpha \|_2^2 \le \sum_{i,j = 1}^q
\Delta_{ij}^2 \le (q B)^2 , \]

where $B$ is any bound for $\Delta_{ij}$ satisfying $\max_{ij} |\Delta_{ij}| \le B$.  Thus we have that $ \| U \Delta U^\T  \|_2  \le qB$ and using the bound of Lemma \ref{rndDes} with the probability given there, we have that, for appropriate $C$, if $\mu$ is one of the $q$ largest eigenvalues of $A$, then
\[    \min_{i \le q} | \mu  - n \sigma_i^2 | \le C q \left(  n \log n \right)^{1/2} , \]
with the quoted probability.  As noted at the outset, the eigenvalues of $A$ are squared singular values of $\tilde F$ and so this implies that if $\gamma$ is one of the top $q$ singular values of $\tilde F$ then
\[    \min_{i \le q} | \gamma - n^{1/2} \sigma_i | \le Cq^{1/2} \left( n \log n \right)^{1/4}    \]
with probability $\ge 1 - B / n^2$ for some $B > 0$.  This concludes the proof of the theorem as it entails that each of the top $q$ singular values of $\tilde F$ looks like $(1 + o(1)) n^{1/2} \sigma_i$ for some $i = 1,\dots,q$.
\end{proof}


\begin{proof}[Proof of Theorem \ref{LPStheorem}]
Let $b(x) = \E_{f, \mathbb{X}} \hat{f}(x)  -  f(x)$ denote the bias, conditioned on design, of the estimator $ \hat{f}(x)$ at $x$.  Then we find that
\bea
b(x) &=& \sum_{i = 1}^n f(x_{(i)}) W_{n, i}(x)    -  f(x)  =   \sum_{i = 1}^n  \left\{ f(x_{(i)}) - f(x) \right\} W_{n, i}(x)    \nonumber \\
&=&  \sum_{i = 1}^n  \left\{ f(x_{(i)}) - f(\delta_i)  \right\} W_{n, i}(x)  +  \sum_{i = 1}^n  \left\{  f(\delta_i) -  f(x) \right\} W_{n, i}(x) 
\eea
and hence
\[ |b(x)| \le  \underbrace{ \left| \sum_{i = 1}^n  \left\{ f(x_{(i)}) - f(\delta_i)  \right\} W_{n, i}(x) \right| }_{\text{I}} +   \underbrace{ \left| \sum_{i = 1}^n  \left\{  f(\delta_i) -  f(x) \right\} W_{n, i}(x) \right| }_{\text{II}}.  \]
Now $\text{II}$ is a deterministic quantity and is bounded in \cite{Tsybakov:2008:INE:1522486} to the order of $h^{\beta}$ for the H\"older class $\Sigma(\beta, L)$.   So starting from the fact that for $x_i$ we have
\bea
\E_f \{ \hat{f}(x_i)    -  f(x_i) \}^2 &=& \E_f  \E_{f, \mathbb{X}}  \{ \hat{f}(x_i)    -  f(x_i) \}^2  \nonumber \\
&=& \E_f  \E_{f, \mathbb{X}}  \{ \hat{f}(x_i)    -  \E_{f, \mathbb{X}} \hat{f}(x_i)  +  \E_{f, \mathbb{X}} \hat{f}(x_i)  -  f(x_i) \}^2  \nonumber \\
&=& \E_f   \left[ \V_{ f, \mathbb{X}} \{  \hat{f}(x_i) \}  +  b^2 (x_i)  \right]   \nonumber
\eea
From above (using the results from \cite{Tsybakov:2008:INE:1522486}), we know that
\[  b^2(x) \le 2 ( \text{I}^2  + \text{II}^2)  \le 2 (\text{I}^2 + C h^{2 \beta})  \]
and that
\[  \V_{ f, \mathbb{X}} \{   \hat{f}(x) \}  \le  \frac{C}{nh}, \]
which together give that 
\[ \E_f \{ \hat{f}(x_i)    -  f(x_i) \}^2 \le C \left( h^{2\beta} + \frac{1}{nh} \right) +  2 \E_f \text{I}^2. \]
Now $f$ is differentiable and so $|f(x) - f(y)| \le C |x - y| $, so we find that
\[  \text{I}  \le   \sum_{i = 1}^n  \left| f(x_{(i)}) - f(\delta_i)  \right|  \left| W_{n, i}(x) \right|  \le  C \sum_{i = 1}^n  \left| x_{(i)} - \delta_i   \right|  \left| W_{n, i}(x) \right|  . \]
Applying Cauchy-Schwarz to the right hand side and using the properties of $W_{n, i}(x)$ from \cite{Tsybakov:2008:INE:1522486} gives
\[ \text{I}^2   \le  C \sum_{i = 1}^n  \left| x_{(i)} - \delta_i   \right|^2  \sum_{i = 1}^n \left| W_{n, i}(x) \right|^2 \le \frac{C}{nh} \sum_{i = 1}^n  \left| x_{(i)} - \delta_i   \right|^2. \]
Thus
\[ \E_f \text{I}^2   \le \frac{C}{nh} \sum_{i = 1}^n   \V\{ x_{(i)} \}  \le \frac{C}{nh} \sum_{i = 1}^n  \frac{i}{n^2}  \le \frac{C}{nh} . \]
This shows that
\[    \E_f \{ \hat{f}(x_i)    -  f(x_i) \}^2 \le C \left( h^{2\beta} + \frac{1}{nh} \right),  \]
and hence
\[ \E_f ( \| \hat{f} - f \|_n^2) \le C \left( h^{2\beta} + \frac{1}{nh} \right).  \]
Choosing $h \sim n^{-1/(2\beta + 1)}$ gives that $ \E_f ( \| \hat{f} - f \|_n^2)  \le C n^{- 2\beta / (1 + 2\beta)}$, which finishes the proof.
\end{proof}

\begin{proof}[Proof of Theorem \ref{OrrTES}]
Using the quantities defined above, notice that we may bound $( f - \hat{f}_{n, K}  )^2$ as
\bea
( f - \hat{f}_{n, K}  )^2  &=& \left\{ \sum_{k = 1}^K (c_k - \hat{c}_k) \varphi_k   +  f_K  \right\}^2    \nonumber \\
&\le& 2  \left\{ \sum_{k = 1}^K (c_k - \hat{c}_k) \varphi_k  \right\}^2  + 2 f_K^2 .  \nonumber 
\eea
Expanding the first term, this gives that
\bea
\| f - \hat{f}_{n, K}  \|_n^2  &\le& \frac{2}{n} \sum_{i=1}^n \left\{  \sum_{u, v = 1}^K (c_u - \hat{c}_u) (c_v - \hat{c}_v) \varphi_u(x_i) \varphi_v(x_i)   +  f_K^2(x_i)  \right\}.
\eea
Now notice that
\bea
\E_{f, \mathbb{X}} \{ (c_u - \hat{c}_u) (c_v - \hat{c}_v) \}  &=&   \E_{f, \mathbb{X}} \{ (c_u -  \overline{c}_u +  \overline{c}_u -  \hat{c}_u) (c_v -  \overline{c}_v +  \overline{c}_v - \hat{c}_v)\}   \nonumber \\
&=& (c_u -  \overline{c}_u)  (c_v -  \overline{c}_v ) + \E_{f, \mathbb{X}} \{ ( \overline{c}_u - \hat{c}_u) ( \overline{c}_v - \hat{c}_v)\} . \nonumber 
\eea
Then because
\bea
 \E_{f, \mathbb{X}}  \{ ( \overline{c}_u - \hat{c}_u) ( \overline{c}_v - \hat{c}_v)  \} &=& \E_{f, \mathbb{X}} \{ \frac{1}{n^2} \sum_{i, j = 1}^n z_i z_j \varphi_u(x_i) \varphi_v(X_j) \}    \nonumber  \\
 &=& \frac{1}{n^2} \sum_{i = 1}^n  \varphi_u(x_i) \varphi_v(x_i )  =  \langle  \varphi_u,  \varphi_v \rangle_n / n   \nonumber
\eea
we arrive at
\[ \E_{f, \mathbb{X}} \{ (c_u - \hat{c}_u) (c_v - \hat{c}_v) \} =  (c_u -  \overline{c}_u)  (c_v -  \overline{c}_v )  + \langle  \varphi_u,  \varphi_v \rangle_n / n,   \]
and this gives that
\begin{eqnarray*}
&& \E_{f, \mathbb{X}} \{  \| f - \hat{f}_{n, K}  \|_n^2\}  \\
&\le& 2 \sum_{u, v = 1}^K \left\{   (c_u -  \overline{c}_u)  (c_v -  \overline{c}_v )  + \frac{ \langle  \varphi_u,  \varphi_v \rangle_n }{ n } \right\}  \langle  \varphi_u,  \varphi_v \rangle_n
+  2 \| f_K \|_n^2.  \nonumber
\end{eqnarray*}
Expanding, we find that
\bea
\E_f  \{ \langle  \varphi_u,  \varphi_v \rangle_n^2 \}  &=&  \E_f \left\{  \frac{1}{n^2}  \sum_{i, j = 1}^n \varphi_u (x_i) \varphi_v (x_i) \varphi_u (X_j) \varphi_v (X_j) \right\} \nonumber \\
&=&  \frac{1}{n^2}  \sum_{i, j = 1}^n \left\{ (1 - \delta_{ij}) \langle \varphi_u, \varphi_v \rangle^2 + \delta_{ij} \langle \varphi_u^2, \varphi_v^2 \rangle \right\} \nonumber \\
&=& \frac{ (n - 1) \delta_{uv} + \langle \varphi_u^2, \varphi_v^2 \rangle }{n} \le  \frac{ (n - 1) \delta_{uv} +  1 }{n} 
\eea
Now we need to look at the other term.   First notice that we may write
\bea
 \E_f  \{ (c_u -  \overline{c}_u)  (c_v -  \overline{c}_v ) \langle  \varphi_u,  \varphi_v \rangle_n\}  &=& \frac{1}{n^3} \sum_{i, j, k = 1}^n A_{ijk} 
\eea
Where
\bea
 A_{ijk} &=&  \E_f \left[ \{ f(x_i) \varphi_u(x_i) - c_u\} \{ f(X_j) \varphi_v(X_j) - c_v \} \varphi_u(X_k) \varphi_v(X_k) \right]\nonumber \\
 &=& \delta_{ij} \left[ (1 - \delta_{kj} )  \delta_{uv}  \V \{ (f \varphi_v)(X) \}   +  \delta_{kj} D_{uv}  \right]
 \eea
and
\begin{eqnarray*}  D_{uv} &=&   \E_f \left[ \{ f(X) \varphi_u(X) - c_u\} \{f(X) \varphi_v(X) - c_v\} \varphi_u(X) \varphi_v(X) \right]  \\
 & & \hspace{200pt} \text{ with } \hspace{5pt} X \sim U[0, 1] .  
 \end{eqnarray*}
As $| \varphi_u | \le 1$, we have $\V \{ (f \varphi_v)(X) \}  \le \|f\|^2_{L^2}$.   Thus we get that 
\[   A_{ijk} \le \delta_{ij} \left\{ (1 - \delta_{kj} )  \delta_{uv}   \|f\|^2_{L^2}   +  \delta_{kj} D_{uv}  \right\}. \]
When $u = v$, one has
\begin{eqnarray*}
&& \E_f \{ f(X) \varphi_u(X) - c_u \} \{ f(X) \varphi_v(X) - c_v \} \varphi_u(X) \varphi_v(X)  \\
 &\le&  \V ( (f \varphi_v)(X) )  \le \|f\|^2_{L^2}.  
 \end{eqnarray*}
Similarly, when $u \ne v$ we may apply Cauchy-Schwarz inequality
\bea
&&\E_f \left[ (f(X) \varphi_u(X) - c_u) (f(X) \varphi_v(X) - c_v) \varphi_u(X) \varphi_v(X) \right]  \nonumber \\
 &\le& \left[   \V \{ (f \varphi_u)(X) \} \right]^{1/2}  \left[   \V \{ (f \varphi_v)(X) \} \right]^{1/2}  \le \|f\|_{L^2}^2 .   \nonumber
\eea
Piecing things together then gives that
\[  A_{ijk} \le  \delta_{ij} \left\{ (1 - \delta_{kj} )  \delta_{uv}   +  \delta_{kj}  \right\} \| f \|_{L^2}^2,    \]
and so
\[   \E_f  \{ (c_u -  \overline{c}_u)  (c_v -  \overline{c}_v ) \langle  \varphi_u,  \varphi_v \rangle_n  \} \le \frac{ (n(n - 1) \delta_{uv}  + n ) \|f\|_{L^2}^2 }{n^3} .  \]
Then putting everything together gives that 
\bea
\E_f \left[ \sum_{u, v = 1}^K \left\{   (c_u -  \overline{c}_u)  (c_v -  \overline{c}_v )  + \frac{ \langle  \varphi_u,  \varphi_v \rangle_n }{ n } \right\}  \langle  \varphi_u,  \varphi_v \rangle_n  \right]  \nonumber \\
\le  \frac{ n^2 K + n K^2  }{  n^3  }   \|f\|_{L^2}^2  +  \frac{ nK  + K^2 }{ n^2 }  \le \frac{ (1 + K / n ) (1 + \|f\|_{L^2}^2 )  K }{ n } .
\eea
This, in turn, gives that
\[ \E_f   ( \| f - \hat{f}_{n, K}  \|_n^2  ) \le 2 \frac{ (1 + K / n ) (1 + \|f\|_{L^2}^2 )  K }{ n }  +  2 \E_f (\| f_K \|_n^2).    \]
\end{proof}

\begin{proof}[Proof of Lemma \ref{KernelConcentration}]
The first inequality follows from the fact that $\overline{\mathcal{R}}_n(\delta) \le \overline{\mathcal{G}}_n^K(\delta)$ together with an application of theorem 14.1 from \cite{Wainwright2019}.  The second will follow from the fact hat $\widehat{\mathcal{G}}_n(\delta)$ is self-bounding \cite{Boucheron2009}, together with corresponding concentration results for this class of functions.   To see this, let $Z = h(x_1, \dots, x_n) = n \widehat{\mathcal{G}}_n(\delta)$ and 
\[ Z_i = h(x_1, \dots, x_{i-1}, x_{i+1}, \dots, x_n) = \E_{g} \left\{ \sup_{f \in \partial \mathcal{H}(\delta)}   \sum_{j \ne i} g_j f(x_j) \right\}.\]
Suppose further that for each $g$ the $\sup$ is attained at  $f^* = f^*(g) \in  \partial \mathcal{H}(\delta)$ (otherwise use limits and Fatou's lemma) so that we have
\bea
Z - Z_i &=& \E_g \left\{  n \sup_{f \in \partial \mathcal{H}(\delta)}  \langle g , f \rangle_n -  \sup_{f \in \partial \mathcal{H}(\delta)}  \sum_{j \ne i} g_j f(x_j) \right\} \\
&=&  \E_g \left\{   n \langle g , f^*(g)  \rangle_n -  \sup_{f \in \partial \mathcal{H}(\delta)}  \sum_{j \ne i} g_j f(x_j) \right\} \\
&\le&  \E_g \left\{  n \langle g , f^*(g)  \rangle_n -  \sum_{j \ne i} g_j f^*(g)(x_j) \right\} \\
&=&  \E_g \left\{   g_i  f^*(g)(x_i)  \right\}  \le  C,
\eea
since we have assumed the kernel is bounded, and thus point evaluations are bounded for all $f \in \partial \mathcal{H}(\delta)$.
Thus, by re-scaling we may assume that $0 \le Z - Z_i \le 1$.    Similarly, adding the upper bound gives that
\[ \sum_{i=1}^n (Z - Z_i) \le n \widehat{\mathcal{G}}_n(\delta)  = Z, \]
and this has shown that $Z = n \widehat{\mathcal{G}}_n(\delta) / C$ is self-bounding for some C, and so by the results of  \cite{Boucheron2009} this immediately implies that for all $u > 0$
\[ \P \left\{  |n \widehat{\mathcal{G}}_n(\delta) - n \overline{\mathcal{G}}_n(\delta) | > C u  \right\} \le 2 \exp \left[ - \frac{u^2}{2 \{ n \overline{\mathcal{G}}_n(\delta)/C + u  \}} \right] , \]
which on making the transformation $u \rightarrow nu/C$ gives that for $u>0$
\[ \P \left\{  |\widehat{\mathcal{G}}_n(\delta) -  \overline{\mathcal{G}}_n(\delta) | >  u  \right\} \le 2 \exp \left[ - \frac{ n u^2 / C }{2 \{ \overline{\mathcal{G}}_n(\delta) + u  \}} \right] . \]
Now let $\gamma_n$ be the smallest positive solution to $\overline{\mathcal{G}}^K_n(\delta) = \delta^2$ and take $u = \lambda \gamma_n^2$ to see that with $C$ possibly different,
\[ \P \left\{ |\widehat{\mathcal{G}}_n(\lambda \gamma_n) -  \overline{\mathcal{G}}_n(\lambda \gamma_n) | > \lambda \gamma_n^2  \right\} \le 2 \exp \left\{ -  C \frac{ n \gamma_n^4 \lambda^2}{ \overline{\mathcal{G}}_n(\lambda \gamma_n) + \lambda \gamma_n^2  } \right\} . \]
Using the fact that the function $ \overline{\mathcal{G}}_n(\delta)  / \delta $ is non-increasing then gives that for $\lambda \ge 1$, 
\[  \overline{\mathcal{G}}_n(\lambda \gamma_n)  / (\lambda \gamma_n)  \le \overline{\mathcal{G}}_n( \gamma_n)  /  \gamma_n  \le  \gamma_n, \] 
so that $ \overline{\mathcal{G}}_n(\lambda \gamma_n) \le \lambda \gamma_n^2$. Substituting this bound shows that for $\lambda \ge 1$, with $C$ representing possibly different constants
\[ \P \left\{ \mathcal{E}^C_1(\lambda) \right\} = \P \left\{  |\widehat{\mathcal{G}}_n(\lambda \gamma_n) -  \overline{\mathcal{G}}_n(\lambda \gamma_n) | > \lambda \gamma_n^2  \right\} \le  C \exp \left(  - C n \gamma_n^2 \lambda   \right). \] 
\end{proof}

\begin{proof}[Proof of Lemma \ref{lemma14}]
The concentration inequality has been established in the discourse above, so we shall focus on proving the assertion that $\E_{\mathbb{X}_n} \left( \delta_n^2 \right) \le C \gamma_n^2$.  To see this, notice that the concentration result above guarantees that
\bea
\E_{\mathbb{X}_n} \left( \delta_n^2 \right)&=& \int_0^{\infty} 2s \P(\delta_n > s) ds \\
&=& 2 \left\{   \int_0^{6\gamma_n} s \P(\delta_n > s) ds +  \int_{6 \gamma_n}^{\infty} s \P(\delta_n > s) ds  \right\} \\
&\le& 2 \left\{   \int_0^{6\gamma_n} s ds +  \int_{6 }^{\infty} \lambda \gamma_n \P(\delta_n > \lambda \gamma_n) \gamma_n d\lambda  \right\}  \\
&\le&  C \gamma_n^2 \left\{  1 +  \int_{6 }^{\infty} \lambda  \P(\delta_n > \lambda \gamma_n) d\lambda  \right\} \\
&\le& C \gamma_n^2 \left\{  1 +  \int_{6 }^{\infty} \lambda \exp ( - C n \gamma_n^2 \lambda  )  d\lambda  \right\} \le C \gamma_n^2.
\eea
\end{proof}

\bibliographystyle{apalike}
\bibliography{bibliography}

\end{document}